\newtheorem{definition}{Definition}[section]
\newtheorem{theorem}[definition]{Theorem}
\newtheorem{lemma}[definition]{Lemma}
\newtheorem{proposition}[definition]{Proposition}
\newtheorem{corollary}[definition]{Corollary}
\newtheoremstyle{remarks}
  {\topsep}
  {\topsep}
  {\rmfamily}
  {0pt}
  {\bfseries}
  {. ---}
  { }
  {\thmname{#1}\thmnumber{ #2}\thmnote{ (#3)}}
\theoremstyle{remarks}
\newtheorem{remark}[definition]{Remark}
\def\calli#1{\expandafter\def\csname
  #1\endcsname{\mathcal{#1}}}	
\def\sets#1{\expandafter\def\csname
  bb#1\endcsname{\mathbb{#1}}}	
\def\rebar#1{\expandafter\def\csname #1bar\endcsname{\overline{\csname
      #1\endcsname}}}		
\def\gothify#1{\expandafter\def\csname
  #1#1#1\endcsname{\mathfrak{#1}}}	
\renewcommand{\SS}{\mathcal{S}}
\newcommand{\Up}[1]{\,\Uparrow #1}
\newcommand{\upp}[1]{\,\uparrow #1}
\newcommand{\slgb}{\mbox{$\sigma$-al}\-ge\-bra}
\newcommand{\CDOT}{\!\!\!\;\cdot\!\!\!\;}
\newcommand{\tq}{\;:\;}
\newcommand{\un}[1]{\mathbf{1}\bm(#1\bm)}
\newcommand{\height}{\tau}
\newcommand{\br}[1]{{B}^{+}_{#1}}
\newcommand{\gbr}[1]{{B}_{#1}}
\newcommand{\dbr}[1]{{B}^{+*}_{#1}}
\newcommand{\bbr}[1]{{B}^{?}_{#1}}
\newcommand{\bbrbar}[1]{\overline{\bbr{#1}}}
\newcommand{\bdbr}[1]{\partial\br{#1}}
\newcommand{\bdbbr}[1]{\partial\bbr{#1}}
\newcommand{\unit}{\textbf{e}}
\newcommand{\leql}{\leq_\text{l}}
\newcommand{\leqr}{\leq_{\text{r}}}
\DeclareMathOperator{\bigveel}{\mathchoice%
  {\bigvee{}_{\raisebox{-.8ex}{\scriptsize\!\!\text{l}}}}%
  {\bigvee{}_{\raisebox{-.3ex}{\scriptsize\!\!\text{l}}}}%
  {\bigvee{}_{\raisebox{-.17ex}{\tiny\!\text{l}}}}%
  {\bigvee{}_{\raisebox{-.17ex}{\tiny\!\text{l}}}}%
}
\DeclareMathOperator{\bigveer}{\mathchoice%
  {\bigvee{}_{\raisebox{-.8ex}{\scriptsize\!\!\text{r}}}}%
  {\bigvee{}_{\raisebox{-.3ex}{\scriptsize\!\!\text{r}}}}%
  {\bigvee{}_{\raisebox{-.17ex}{\tiny\!\text{r}}}}%
  {\bigvee{}_{\raisebox{-.17ex}{\tiny\!\text{r}}}}%
}
\DeclareMathOperator{\bigwedgel}{\mathchoice%
  {\bigwedge{}_{\raisebox{-.8ex}{\scriptsize\!\text{l}}}}%
  {\bigwedge{}_{\raisebox{-.3ex}{\scriptsize\!\text{l}}}}%
  {\bigwedge{}_{\raisebox{-.17ex}{\tiny\!\text{l}}}}%
  {\bigwedge{}_{\raisebox{-.17ex}{\tiny\!\text{l}}}}%
}
\newcommand{\veel}{\vee_{\!\text{l}}}
\newcommand{\wedgel}{\wedge_{\text{l}}}
\newcommand{\Dl}{\mathsf{L}}
\newcommand{\Dr}{\mathsf{R}}
\newcommand{\DD}{\mathcal{D}}
\newcommand{\CC}{\mathcal{C}}
\newcommand{\NN}{\mathcal{N}}
\newcommand{\abs}[1]{|#1|}
\newcommand{\rc}{\theta}
\newlength{\toplength}
\newcommand{\mybox}{$-1/2+3\rc$}
\newcolumntype{C}{>{\centering$}p{\toplength}<{$}}
\newcolumntype{D}{c}
\numberwithin{equation}{section}
\title{Uniform measures on braid monoids\\
and dual braid monoids} \date{}
\author{Samy Abbes\and Sébastien Gouëzel\and
  Vincent Jugé\and Jean Mairesse}
\begin{document}

\maketitle

\begin{abstract}
\begin{center}
\textbf{Abstract}
\end{center}\smallskip
We aim at studying the asymptotic properties of typical \emph{positive
  braids}, respectively \emph{positive dual braids}.  Denoting by
$\mu_k$ the uniform distribution on positive (dual) braids of
length~$k$, we prove that the sequence $(\mu_k)_k$ converges to a
unique probability measure $\mu_{\infty}$ on \emph{infinite} positive
(dual) braids. The key point is that the limiting measure
$\mu_{\infty}$ has a Markovian structure which can be described
explicitly using the combinatorial properties of braids encapsulated
in the Möbius polynomial. As a by-product, we settle a conjecture by
Gebhardt and Tawn (J.~Algebra, 2014) on the shape of the Garside
normal form of large uniform braids.
%


\medskip
\textbf{MSC (2010):} 20F36, 05A16, 60C05
\end{abstract}

\section{Introduction}
\label{sec:introduction}

Consider a given number of strands, say $n$, and the associated positive
braid monoid $\br n$  defined by the following monoid presentation, known as
the {\em Artin} presentation:
\begin{gather}
\label{eq:0*}
\arraycolsep=0.3pt
\br n = \left\langle \sigma_1,\ldots,\sigma_{n-1} \left|
\begin{array}{ll} \sigma_i \sigma_j = \sigma_j \sigma_i &\quad \text{for $|i-j| \geq 2$} \\
\sigma_i \sigma_{j} \sigma_i = \sigma_{j} \sigma_i \sigma_{j} &\quad \text{for $|i-j|=1$}
\end{array} \right.\right\rangle^+\,.
\end{gather}
The elements of $\br n$, the {\em positive braids}, are therefore equivalence
classes of words over the alphabet $\Sigma= \{\sigma_1,\dots,
\sigma_{n-1}\}$. Alternatively, going back to the original geometric
intuition, positive braids can be viewed as isotopy classes of {\em positive}
braid diagrams, that is, braid diagrams in which the bottom strand always
goes on top in a crossing, see Figure~\ref{fig:braidsojf}.

\begin{figure}
  \begin{align*}
\begin{tikzpicture}
\draw[thick,green] (0,0) -- (.5,0);	
\draw[thick,green] (0.5,0) -- (1,.5);
\draw[thick,green] (1,.5) -- (1.25,.5);
\draw[thick,green] (1.25,0.5) -- (1.75,1);
\draw[thick,green] (1.75,1) -- (2,1);
\draw[thick,green] (2,1) -- (2.5,1.5);
\draw[thick,green] (2.5,1.5) -- (3.75,1.5);
\draw[thick,red] (0,.5) -- (.5,.5);	
\draw[thick,red] (.5,.5) -- (.7,.3);
\draw[thick,red] (.8,.2) -- (1,0);
\draw[thick,red] (1,0) -- (3.75,0);
\draw[thick,blue] (0,1) -- (1.25,1);	
\draw[thick,blue] (1.25,1) -- (1.45,.8);
\draw[thick,blue] (1.55,.7) -- (1.75,.5);
\draw[thick,blue] (1.75,.5) -- (2.75,.5);
\draw[thick,blue] (2.75,.5) -- (3.25,1);
\draw[thick,blue] (3.25,1) -- (3.75,1);
\draw[thick,orange] (0,1.5) -- (2,1.5);	
\draw[thick,orange] (2,1.5) -- (2.2,1.3);
\draw[thick,orange] (2.3,1.2) -- (2.5,1);
\draw[thick,orange] (2.5,1) -- (2.75,1);
\draw[thick,orange] (2.75,1) -- (2.95,.8);
\draw[thick,orange] (3.05,.7) -- (3.25,.5);
\draw[thick,orange] (3.25,.5) -- (3.75,.5);
\node at (-.5,0){$1$};
\node at (-.5,.5){$2$};
\node at (-.5,1){$3$};
\node at (-.5,1.5){$4$};
\node at (0.75,-.5){$\sigma_1$};
\node at (1.5,-.5){$\sigma_2$};
\node at (2.25,-.5){$\sigma_3$};
\node at (3,-.5){$\sigma_2$};
\end{tikzpicture}
    &&
\begin{tikzpicture}
\node at (-.5,0){$1$};
\node at (-.5,.5){$2$};
\node at (-.5,1){$3$};
\node at (-.5,1.5){$4$};
\node at (0.75,-.5){$\sigma_3$};
\node at (1.5,-.5){$\sigma_1$};
\node at (2.25,-.5){$\sigma_2$};
\node at (3,-.5){$\sigma_3$};
\draw[thick,green] (0,0) -- (1.25,0);	
\draw[thick,green] (1.25,0) -- (1.75,.5);
\draw[thick,green] (1.75,.5) -- (2,.5);
\draw[thick,green] (2,0.5) -- (2.5,1);
\draw[thick,green] (2.5,1) -- (2.75,1);
\draw[thick,green] (2.75,1) -- (3.25,1.5);
\draw[thick,green] (3.25,1.5) -- (3.75,1.5);
\draw[thick,red] (0,.5) -- (1.25,.5);	
\draw[thick,red] (1.25,.5) -- (1.45,.3);
\draw[thick,red] (1.55,.2) -- (1.75,0);
\draw[thick,red] (1.75,0) -- (3.75,0);
\draw[thick,blue] (0,1) -- (.5,1);	
\draw[thick,blue] (.5,1) -- (1,1.5);
\draw[thick,blue] (1,1.5) -- (2.75,1.5);
\draw[thick,blue] (2.75,1.5) -- (2.95,1.3);
\draw[thick,blue] (3.05,1.2) -- (3.25,1);
\draw[thick,blue] (3.25,1) -- (3.75,1);
\draw[thick,orange] (0,1.5) -- (0.5,1.5);	
\draw[thick,orange] (0.5,1.5) -- (0.7,1.3);
\draw[thick,orange] (0.8,1.2) -- (1,1);
\draw[thick,orange] (1,1) -- (2,1);
\draw[thick,orange] (2,1) -- (2.2,.8);
\draw[thick,orange] (2.3,.7) -- (2.5,.5);
\draw[thick,orange] (2.5,.5) -- (3.75,.5);
\end{tikzpicture}
  \end{align*}
  \caption{Two isotopic braid diagrams representing a positive braid on $4$
    strands. Left: diagram corresponding to the word
    $\sigma_1\sigma_2\sigma_3\sigma_2$\,.  Right: diagram
    corresponding to the word
    $\sigma_3\sigma_1\sigma_2\sigma_3$\,. }
  \label{fig:braidsojf}
\end{figure}
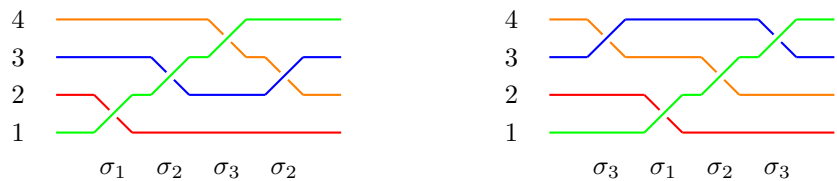

\medskip

We want to address the following question:
\begin{quotation}\label{quote}
What does a typical complicated positive braid look like?
\end{quotation}

To make the question more precise, we need to clarify the meaning of
``complicated'' and ``typical''.  First, let the complexity of a positive
braid be measured by the length (number of letters) of any representative
word. This is natural since it corresponds to the number of crossings between
strings in any representative braid diagram. Therefore, a positive braid is
``complicated'' if its length is large.

Second, let us  define a ``typical'' braid as a braid being picked at random
according to some probability measure. The two natural candidates for such a
probability measure are as follows. Fix a positive integer $k$.
\begin{itemize}
\item The first option consists in running a simple random walk on
  $\br n$~: pick a sequence of random elements $x_i, i\geq 1,$
  independently and uniformly among the generators
  $\Sigma= \{\sigma_1,\dots , \sigma_{n-1}\}$, and consider the
  ``typical'' braid $X=x_1\cdot x_2\cdot \dots \cdot x_k$. It
  corresponds to drawing a word uniformly in~$\Sigma^k$ and then
  considering the braid it induces.
\item The second option consists in picking a ``typical'' braid of length
    $k$ uniformly at random among all braids of length~$k$.
\end{itemize}
The two approaches differ since the number of representative words varies
among positive braids of the same length. For instance, in $\br 3$ and for
the length 3, the braid $\sigma_1\cdot \sigma_2 \cdot \sigma_1$ $(=
\sigma_2\cdot \sigma_1 \cdot \sigma_2)$ will be picked with probability 2/8
in the first approach, and with probability 1/7 in the second one, while all
the other braids of length 3 will be picked respectively with probabilities
1/8 and 1/7 in the two approaches. The random walk approach has been studied
for instance in~\cite{MaMa06,vershik00}; it is a special instance of random
walks on (semi)groups, see~\cite{woes}. In this paper, our focus is on the
second approach, that is, on {\em uniform measures on positive braids}.

\medskip

Let $\mu_k$ be the uniform probability measure on positive braids of
$\br n$ of length~$k$. The general question stated above can now be
rephrased as follows : study $\mu_k$ for large~$k$. Let us say that we
are interested in some specific property, say, the number of
occurrences of the Garside element $\Delta$ in a large random
braid. To study it, a first approach consists in performing a
numerical evaluation. To that purpose, the key ingredient is to have a
{\em sampling algorithm}, that is, a random procedure which takes as
input $k$ and returns as output a random braid of
distribution~$\mu_k$. Another, more intrinsic, approach consists first
in defining a probability measure $\mu_{\infty}$ on {\em infinite}
positive braids, encapsulating all the measures~$\mu_k$, and then in
studying the asymptotics of the property
\textit{via}~$\mu_{\infty}$. None of these two paths is easy to
follow. The difficulty is that the probability measures $(\mu_k)_k$
are not consistent with one another. For instance, in~$\br 3$, we
have:
\begin{equation}\label{non-consistent}
1/4 = \mu_2(\sigma_1\cdot \sigma_1) \neq \mu_3(\sigma_1\cdot \sigma_1\cdot \sigma_1)
  +\mu_3(\sigma_1\cdot \sigma_1\cdot \sigma_2)= 2/7 \:.
\end{equation}
Therefore, there is no obvious way to design a dynamic process to sample
braids. As another consequence, the Kolmogorov consistency theorem does not
apply, and there is no simple way to define a uniform probability measure on
infinite positive braids. This is in sharp contrast with the simpler picture
for the random walk approach described above.

\medskip

To overcome the difficulties, the rich combinatorics of positive braids has
to enter the scene. Going back to Garside~\cite{garside1969braid} and
Thurston~\cite{ECHLPT}, it is known that positive braids admit a \emph{normal
form}, that is a selection of a unique representative word for each braid,
which is \emph{regular}, that is recognized by a finite automaton. This so
called \emph{Garside normal form} enables to count positive braids in an
effective way, see for instance Brazil~\cite{braz}, but a non-efficient one
since the associated automaton has a large number of states, exponential in
the number of strands $n$, see Dehornoy~\cite{dehornoy07}. A breakthrough is
provided by Bronfman~\cite{bronfman01} (see also~\cite{albenque09}) who
obtains,  using an inclusion-exclusion principle, a simple recursive formula
for counting positive braids. Based  on this formula, a sampling algorithm
whose time and space complexities are polynomial in both the number of
strands $n$ and the length $k$ is proposed by Gebhardt and Gonzales-Meneses
in~\cite{gebhard13}. Using the sampling procedure, extensive numerical
evaluations are performed by Gebhardt and Tawn in~\cite{gebhardt14}, leading
to the {\em stable region conjecture} on the shape of the Garside normal form
of large uniform braids.

In the present paper, we complete the picture by proving the existence of a
natural uniform probability measure $\mu_{\infty}$ on infinite positive
braids. The measure induced by $\mu_{\infty}$ on braids of length $k$ is not
equal to~$\mu_k$, which is in line with the non-consistency illustrated
in~(\ref{non-consistent}), but the sequence $(\mu_k)_k$ does converge weakly
to~$\mu_{\infty}$. The remarkable point is that the measure $\mu_{\infty}$
has a Markovian structure which can be described explicitly. It makes it
possible to get precise information on $\mu_k$ for large $k$ by using the
limit~$\mu_{\infty}$. For instance,  we prove that the number of $\Delta$ in
a random braid of $\br n$ is asymptotically geometric of parameter
$q^{n(n-1)/2}$ where $q$ is the unique root of smallest modulus of the Möbius
polynomial of~$\br n$. As another by-product of our results, we settle the
stable region conjecture, proving one of the two statements in the
conjecture, and refuting the other one. Our different results are achieved by
strongly relying on refined properties of the combinatorics of positive
braids, some of them new.

\medskip

\textit{Mutatis mutandis}, the results also hold in the Birman-Ko-Lee dual
braid monoid~\cite{birman1998new}. We present the results in a unified way,
with notations and conventions allowing to cover the braids and the dual
braids at the same time. The prerequisites on these two monoids are recalled
in Section~\ref{sec:posit-dual-posit}, and the needed results on the
combinatorics of braids are presented in
Section~\ref{sec:garside-normal-form}. The main results are proved in
Section~\ref{sec:unif-meas-braid}, with applications in
Section~\ref{sec:appl-asympt-finite}, including the clarification of the
stable region conjecture. In Section~\ref{se-explicit}, we provide explicit
computations of the uniform measure $\mu_{\infty}$ for the braid monoid and
the dual braid monoid on 4 strands.  At last, analogs and extensions are
evoked in Section~\ref{se-ext}. Indeed, our results on braid monoids form a
counterpart to the results on trace monoids in~\cite{abbes15a,abbes15b}, and,
in a forthcoming paper~\cite{opus2}, we plan to prove results in the same
spirit for Artin-Tits monoids, a family encompassing both braids and traces.


\section{Positive and dual positive braid monoids}
\label{sec:posit-dual-posit}

In this section we introduce some basics on the monoid of positive braids and
the monoid of positive dual braids. We recall the notions of simple braids
for these monoids, as well of combinatorial representations of them.

\subsection{Two distinct braid monoids}
\label{sec:posit-braid-mono}

\subsubsection{The braid group and two of its submonoids}
\label{sec:presentations}

For each integer $n\geq2$, the \emph{braid group} $\gbr n$ is the group with
the following group presentation:
\begin{gather}
\label{eq:1*}
\arraycolsep=0.3pt
\gbr n = \left\langle \sigma_1,\ldots,\sigma_{n-1} \left|
\begin{array}{ll} \sigma_i \sigma_j = \sigma_j \sigma_i &\quad \text{for $|i-j| \geq 2$} \\
\sigma_i \sigma_{j} \sigma_i = \sigma_{j} \sigma_i \sigma_{j} &\quad \text{for $|i-j|=1$}
\end{array} \right.\right\rangle\,.
\end{gather}

Elements of $\gbr n$ are called \emph{braids}. Let $\unit$ and
``$\cdot$'' denote respectively the unit element and the concatenation
operation in~$\gbr n$\,. It is well known since the work of Artin that
elements of $\gbr n$ correspond to isotopy classes of braid diagrams
on $n$ strands, as illustrated in Figure~\ref{fig:braidsojf}; the
elementary move where strand $i$ crosses strand $i+1$ from above
corresponds to generator~$\sigma_i$, and the move where strand $i$
crosses strand $i+1$ from behind to~$\sigma_i^{-1}$\,.

We will be interested in two submonoids of~$\gbr n$\,.  The \emph{positive
braids monoid} $\br n$ is the submonoid of $\gbr n$ generated by
$\{\sigma_1,\ldots,\sigma_{n-1}\}$\,; and the \emph{positive dual braid
monoid} $\dbr n$ is the submonoid of $\gbr n$ generated by $\{\sigma_{i,j}\
|\ 1\leq i<j\leq n\}$\,, where $\sigma_{i,j}$ is defined by:
\begin{align*}
\sigma_{i,j}&=\sigma_i\,,&&\text{for $1\leq i<n$ and $j=i+1$,}\\
\sigma_{i,j}&=\sigma_i \sigma_{i+1} \ldots \sigma_{j-1}
\sigma_{j-2}^{-1} \sigma_{j-3}^{-1} \ldots
\sigma_i^{-1}\,,&&\text{for $1\leq i< n-1$ and $i+2\leq j\leq n$}\,.
\end{align*}
Observe the inclusion $\br n\subseteq\dbr n$, since each generator $\sigma_i$
of $\br n$ belongs to~$\dbr n$. The elements $\sigma_{i,j}$ are often called
\emph{Birman-Ko-Lee generators} in the literature, while the elements
$\sigma_i$ are called \emph{Artin} generators.

\paragraph*{Running examples for $n=3$.}
\label{sec:runn-exampl-n=3-1}

Throughout the paper, we shall illustrate the notions and results on the most
simple, yet non trivial  examples of braid monoids, namely on $\br 3$ and
on~$\dbr 3$\,:
\begin{align*}
  \br 3&=\langle\sigma_1,\;\sigma_2\rangle^+\,,\\
\dbr 3&=\langle
        \sigma_{1,2},\;\sigma_{2,3},\;\sigma_{1,3}\rangle^+
&\text{with }&
               \begin{aligned}
\sigma_{1,2}&=\sigma_1\,,&\sigma_{2,3}&=\sigma_2\,,
&\sigma_{1,3}&=\sigma_1\cdot\sigma_2\cdot\sigma_1^{-1}\,.
               \end{aligned}
\end{align*}

\subsubsection{Presentations of the monoids}
\label{sec:presentations-1}

Defining $\br n$ and $\dbr n$ as submonoids of~$\gbr n$\,, as we just did, is
one way of introducing them. Another one is through generators and relations.

First, $\br n$~is isomorphic to the monoid with the monoid
presentation~(\ref{eq:0*}), that is, the same presentation as $B_n$ but
viewed as a monoid presentation instead of a group presentation. Second,
$\dbr n$ is isomorphic to the monoid with $n(n-1)/2$ generators
$\sigma_{i,j}$ for $1\leq i<j\leq n$ and the following relations, provided
that the convention $\sigma_{j,i}=\sigma_{i,j}$ for $i<j$ is in force:
\begin{gather}
  \label{eq:3*}
  \begin{cases}
\sigma_{i,j} \sigma_{j,k} = \sigma_{j,k} \sigma_{k,i} = \sigma_{k,i} \sigma_{i,j} &
\text{for $ 1 \leq i < j < k \leq n$} \\
\sigma_{i,j} \sigma_{k,\ell} = \sigma_{k,\ell} \sigma_{i,j} & \text{for $ 1 \leq i < j < k < \ell \leq n$} \\
\sigma_{i,j} \sigma_{k,\ell} = \sigma_{k,\ell} \sigma_{i,j} & \text{for $ 1 \leq i < k < \ell < j \leq n  $}  \:.
  \end{cases}
\end{gather}

%

Elements of~$\br n$ are called \emph{positive
  braids}, they correspond to isotopy classes of braid diagrams involving only
crossing of strands in the same direction, see Figure~\ref{fig:braidsojf}.
Elements of $\dbr n$ are called \emph{dual positive braids}. They correspond
to isotopy classes of chord diagrams~\cite{birman1998new}. This time, there
are still $n$ strands but they are arranged along a cylinder; the element
$\sigma_{i,j}$ corresponds to a crossing of strands $i$ and~$j$. See
Figure~\ref{fig:udalbraids}.

\begin{figure}
\centering
\input{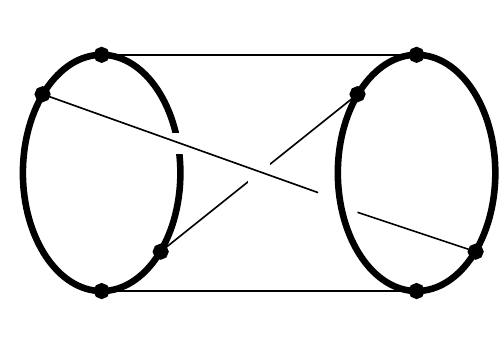_t}
  \caption{Representation of $\sigma_{1,3}$ in $\dbr 4$}
  \label{fig:udalbraids}
\end{figure}

\medskip

The inclusion $\br n\subseteq\dbr n$ comes with the definition of
$\br n$ and $\dbr n$ as submonoids of the braid group $\gbr n$. It can
be obtained as follows when considering $\br n$ and $\dbr n$ as
abstract monoids with generators and relations. Let
$\iota:\{\sigma_1,\ldots,\sigma_n\}\to\dbr n$ be defined by
$\iota(\sigma_i)=\sigma_{i,i+1}$\,, and keep the notation $\iota$ to
denote the natural extension on the free monoid
$\iota:\{\sigma_1,\ldots,\sigma_n\}^*\to\dbr n$\,. It is clear that
$\iota$ is constant on congruence classes of positive braids, whence
$\iota$ factors through $\iota:\br n\to\dbr n$. It can then be proved
that this morphism is injective~\cite{garside1969braid,birman1998new}.

\begin{remark}
\label{rem:6} We emphasize that all the notions that we are about to define
on $\br n$ and on $\dbr n$ may or may not coincide on $\br n\cap\dbr n=\br
n$\,. Henceforth, it is probably clearer to keep in mind the point of view on
these monoids through generators and relations, rather than as submonoids
of~$\gbr n$\,.
\end{remark}

\paragraph*{Running examples for $n=3$.}
\label{sec:runn-exampl-n=3}

The presentations of the monoids $\br 3$ and $\dbr 3$ are the following:
\begin{align*}
  \br 3&=\langle
         \sigma_1,\ \sigma_2 \mid
         \sigma_1\sigma_2\sigma_1=\sigma_2\sigma_1\sigma_2\rangle^+\\
  \dbr 3&=\langle \sigma_{1,2},\ \sigma_{2,3},\ \sigma_{1,3} \mid \sigma_{1,2}\sigma_{2,3} =
          \sigma_{2,3}\sigma_{1,3} = \sigma_{1,3}\sigma_{1,2}\rangle^+
\end{align*}

\subsubsection{A common notation}
\label{sec:simultaneous}

We will consider simultaneously the monoids $\br n$ and~$\dbr n$. Henceforth,
we will denote by $\bbr n$ a monoid which, unless stated otherwise, may be
either the monoid $\br n$ or $\dbr n$.  The statements that we will prove for
the monoid $\bbr n$ will then hold for both monoids $\br n$ and~$\dbr n$.

In addition, we will denote by $\Sigma$ the set of generators of $\bbr n$,
hence $\Sigma = \{\sigma_i \tq 1 \leq i \leq n-1\}$ if $\bbr n = \br n$ and
$\Sigma = \{\sigma_{i,j} \tq 1 \leq i < j \leq n\}$ if $\bbr n = \dbr n$.

\subsubsection{Length and division relations. Mirror mapping}
\label{sec:length-divis-relat}

The above presentations~(\ref{eq:0*}) and~(\ref{eq:3*}) of $\bbr n$ are
homogeneous, meaning that the relations involve words of the same lengths.
Hence, the \emph{length} of $x\in\bbr n$\,, denoted by~$\abs{x}$, is the
length of any word in the equivalence class~$x$, with respect to the
congruence defining~$\bbr n$.

\begin{remark}
\label{rem:2} The length is an example of a quantity which is defined on both
$\br n$ and on~$\dbr n$, and which is invariant on~$\br n$\,. That is to say,
the length $\abs x$ of a positive braid $x\in\br n$ is invariant whether $x$
is considered as an element of $\br n$ or as an element of~$\dbr n$. Indeed,
if $x$ has length $k$ as an element of~$\br n$, then it necessarily writes as
a product $x=\sigma_{\varphi(1)}\cdot\ldots\cdot \sigma_{\varphi(k)}$ for
some function $\varphi:\{1,\ldots,k\}\to\{1,\ldots,n-1\}$\,. This entails
that~$x$, as an element of~$\dbr n$, writes as
$x=\sigma_{\varphi(1),\varphi(1)+1}\cdot\ldots\cdot\sigma_{\varphi(k),\varphi(k)+1}$\,,
and thus $x$ also has length $k$ as an element of~$\dbr n$\,.
\end{remark}

The monoid $\bbr n$ is equipped with the \emph{left} and with the \emph{right
divisibility} relations, denoted respectively $\leql$ and~$\leqr$\,, which
are both partial orders on $\bbr n$, and are defined by:
\begin{align*}
  x\leql y&\iff\exists z\in\bbr n\quad y=x\cdot z,&
  x\leqr y&\iff\exists z\in\bbr n\quad y=z\cdot x.&
\end{align*}

The mirror mapping, defined on words by $a_1\ldots a_k\mapsto a_k\ldots
a_1$\,, factorizes through $\bbr n$ and induces thus a \emph{mirror mapping}
on braids, denoted by $x\in\bbr n\mapsto x^*\in\bbr n$. It is an involutive
anti-isomorphism of monoids, it preserves the length of braids and swaps the
left and right divisibility relations:
\begin{align*}
  \forall x\in\bbr n\quad\abs{x^*}&=\abs{x}\,,&
\forall x,y\in\bbr n\quad x\leqr y&\iff x^*\leql y^*\,.
\end{align*}

The mirror mapping being an isomorphism of partial orders $(\bbr
n,\leql)\to(\bbr n,\leqr)$, we shall focus on the left divisibility relation
$\leql$ only.

\begin{remark}
  \label{rem:3}
  Following Remark~\ref{rem:2}, it is clear that the left divisibility
  is also invariant on~$\br n$\,: if $x,y\in\br n$ are such that
  $x\leql y$ in~$\br n$\,, then $x\leql y$ also holds in~$\dbr n$\,.
  Observe however that the converse is not true. For instance,
  consider the case $n=3$ and set $x=\sigma_2$ and $y=\sigma_1\cdot\sigma_2$.
In $\br 3$, clearly, $x\leql y$ does not
  hold. However, in $\dbr 3$, we have $x=\sigma_{2,3}$ and $y=\sigma_{1,2}\cdot\sigma_{2,3}=\sigma_{2,3}\cdot\sigma_{1,3}$, therefore
$x\leql y$  does hold.
\end{remark}

\subsection{Garside structure and simple braids}
\label{sec:comb-repr-braids}

\subsubsection{Garside structure}
\label{sec:simple-braids-gars}

The monoid $\bbr n$ is known to be a \emph{Garside
  monoid}~\cite{adian1984fragments,bessis2003dual,birman1998new}; that is to say:
\begin{enumerate}[(1)]
\item $\bbr n$~is a cancellative monoid;

\item $\bbr n$~contains a \emph{Garside element}, that is to say, an
    element whose set of left divisors coincides with its set of right
    divisors and contains the generating set~$\Sigma$\,;

\item Every finite subset $X$ of $\bbr n$ has a least upper bound in $(\bbr
    n,\leql)$, and a greatest lower bound if $X\neq\emptyset$, respectively
    denoted $\bigveel X$ and~$\bigwedgel X$.

\end{enumerate}

Let $\bigveer X$ denote the least upper bound in $(\bbr n,\leqr)$ of a subset
$X\subseteq\bbr n$\,. Then, if $X$ is a subset of~$\Sigma$, it is
known~\cite{birman1998new,garside1969braid} that $\bigveel X$ and $\bigveer
X$ coincide. We introduce therefore the notation $\Delta_X$ for:
\[
  \Delta_X=\bigveel X=\bigveer X\,,\qquad\text{for $X\subseteq\Sigma$\,.}
\]
Moreover, $\Delta_X$~has the same left divisors and right divisors in~$\bbr
n$: $\{x \in \bbr n \tq x \leql \Delta_X\} = \{x \in \bbr n \tq x \leqr
\Delta_X\}$\,.

The element $\Delta_X$ one obtains when considering $X=\Sigma$ plays a
special role in Garside theory. Indeed, based on the above remarks, it is not
difficult to see that $\Delta_\Sigma$ is a Garside element of~$\bbr n$, and
is moreover the \emph{smallest} Garside element of~$\bbr n$.
Defining the elements $\Delta_n\in\br n$ and $\delta_n\in\dbr n$ by:
\begin{align*}
  \Delta_n&=(\sigma_1\cdot\ldots\cdot\sigma_{n-1})\cdot(\sigma_1\cdot\ldots\cdot\sigma_{n-2})\cdot\ldots\cdot(\sigma_1\cdot\sigma_2)\cdot\sigma_1\,,&
\delta_n&= \sigma_{1,2} \cdot \sigma_{2,3} \cdot \ldots \cdot
             \sigma_{n-1,n}\,,
\end{align*}
we have $\Delta_\Sigma=\Delta_n$ if $\bbr n=\br n$ and
$\Delta_\Sigma=\delta_n$ if $\bbr n=\dbr n$. We adopt the single notation
$\Delta=\Delta_\Sigma$ to denote either $\Delta_n$ or $\delta_n$ according to
which monoid we consider.

\subsubsection{Definition of simple braids}
\label{sec:defin-simple-braids}

The set of all divisors of $\Delta$ is denoted by~$\SS_n$, and its elements
are called \emph{simple braids}.  It is a bounded subset of~$\bbr n$, with
$\unit$ as minimum and $\Delta$ as maximum, closed under $\bigveel$ and
under~$\bigwedgel$\,. With the induced partial order, $(\SS_n,\leql)$ is thus
a lattice.

Consider the mapping $\Phi:\mathcal{P}(\Sigma)\to\SS_n, \ X \mapsto
\Delta_X$, and its image:
\begin{equation*}
  \label{eq:7}
\DD_n=\{\Delta_X\tq X\subseteq\Sigma\}\,.
\end{equation*}
Then $\Phi:\bigl(\mathcal P (\Sigma),\subseteq\bigr)\to(\SS_n,\leql)$ is a
lattice homomorphism, and $\DD_n$ is thus a sub-lattice of $(\SS_n,\leql)$.
If $\bbr n=\br n$\,, the mapping $\Phi$ is injective, but \emph{not
onto}~$\SS_n$\,, and so $(\DD_n,\leql)$ is isomorphic to $\bigl(\mathcal
P(\Sigma),\subseteq\bigr)$.  If $\bbr n=\dbr n$, the mapping $\Phi$ is not
injective, but it is onto~$\SS_n$\,, hence $\DD_n=\SS_n$\,.

\begin{remark}
\label{rem:5} Contrasting with the length discussed in Remark~\ref{rem:2},
the notion of simplicity is \emph{not} invariant on~$\br n$\,. For instance,
the braid $\Delta_n$ is simple in~$\br n$\,, but it is not simple as an
element of~$\dbr n$ as soon as $n\geq3$. Indeed, since its length is
$\abs{\Delta_n}=n(n-1)/2$\,, it cannot be a divisor of $\delta_n$ which is of
length $\abs{\delta_n}=n-1$.
\end{remark}

\paragraph*{Running examples for $n=3$.}

The Garside elements of $\br 3$ and of $\dbr 3$ are:
\begin{align*}
\delta_3&=\sigma_{1,2}\cdot\sigma_{2,3}\,,&
  \Delta_3&=\sigma_1\cdot\sigma_2\cdot\sigma_1\,.
\end{align*}

The Hasse diagrams of $(\SS_3,\leql)$ are pictured in
Figure~\ref{fig:hessediagram3}.  For~$\br 3$\,, the lattice $(\DD_3,\leql)$
consists of the following four elements:
\begin{align*}
  \Delta_\emptyset&=\unit&\Delta_{\sigma_1}&=\sigma_1
&\Delta_{\sigma_2}&=\sigma_2&\Delta_{\sigma_1,\sigma_2}&=\sigma_1\cdot\sigma_2\cdot\sigma_1=\Delta
\end{align*}
whereas the lattice $(\SS_3,\leql)$ contains the two additional elements
$\sigma_1\cdot\sigma_2$ and~$\sigma_2\cdot\sigma_1$\,. For~$\dbr 3$\,, the
elements of $\DD_3$ and $\SS_3$ are:
\begin{gather*}
\begin{aligned}
  \Delta_\emptyset &=\unit\qquad
&\Delta_{\{\sigma_{1,2}\}}  &=\sigma_{1,2}\qquad
&\Delta_{\{\sigma_{2,3}\}} &=\sigma_{2,3}\qquad
&\Delta_{\{\sigma_{1,3}\}} &=\sigma_{1,3}
\end{aligned} \\
\Delta_{\{\sigma_{1,2},\sigma_{2,3}\}}=\Delta_{\{\sigma_{2,3},\sigma_{1,3}\}}=
\Delta_{\{\sigma_{1,2},\sigma_{1,3}\}}=\Delta_{\{\sigma_{1,2},\sigma_{2,3},\sigma_{1,3}\}}=\delta_3
\end{gather*}

\begin{figure}
\setbox0=\hbox{$\sigma_1\cdot\sigma_2$}
\newlength{\mylength}\setlength{\mylength}{\wd0}
\newcommand{\myentry}[1]{*+[F]{\makebox[\mylength]{\strut$#1$}}}
  \begin{align*}
\xymatrix@R=1.3em@C=1.5em{
&
\myentry{\Delta_3}\\
\strut \sigma_1\cdot\sigma_2\POS!U\ar@{-}[ur]!D!L
&&\strut\sigma_2\cdot\sigma_1\POS!U\ar@{-}[ul]!D!R\\
\myentry{\sigma_1}\ar@{-}[u]&&\myentry{\sigma_2}\ar@{-}[u]\\
 &\myentry{\unit}\POS!U!L\ar@{-}[ul]!D\POS!R\ar@{-}[ur]!D
}
&&
\xymatrix{
&\myentry{\delta_3}\\
\myentry{\sigma_{1,2}}\POS!U\ar@{-}[ur]!D!L
&\myentry{\sigma_{2,3}}\ar@{-}[u]
&\myentry{\sigma_{1,3}}\POS!U\ar@{-}[ul]!D!R\\
&\myentry{\unit}\POS!U!L\ar@{-}[ul]!D\POS!R\ar@{-}[ur]!D\POS!L(0.5)\ar@{-}[u]
}
  \end{align*}
  \caption{Hasse diagrams of $(\SS_3,\leql)$ for $\br 3$ at left and
    for $\dbr 3$ at right. Elements of $\DD_3$ are framed.}
  \label{fig:hessediagram3}
\end{figure}

\subsubsection{Combinatorial representations of simple braids}
\label{sec:comb-epr-simple}

The natural map that sends each generator $\sigma_i$ of the braid group $\gbr
n$ to the transposition $(i,i+1)$ induces a morphism from $\gbr n$
to~$\mathfrak{S}_n$\,, the set of permutations of $n$ elements.  Hence, this
map also induces morphisms from $\br n$ and from $\dbr n$
to~$\mathfrak{S}_n$.

In the case of the braid monoid $\br n$, this morphism induces a bijection
from $\SS_n$ to~$\mathfrak{S}_n$\,.  Thus, $\SS_n$~has cardinality~$n!$\,. The
element $\unit$~corresponds to the identity permutation, and the element
$\Delta_n$ to the mirror permutation $i\mapsto n+1-i$.

From the point of view of braid diagrams, such as the one pictured in
Figure~\ref{fig:braidsojf}, simple braids correspond to braids such that in
any representative braid diagram, any two strands cross at most once.

%

\medskip

In the case of the dual braid monoid~$\dbr n$, this morphism only induces an
injection from $\SS_n$ to~$\mathfrak{S}_n$.  It is thus customary to consider
instead the following alternative representation.  Recall that a partition
$\{T^1,\ldots,T^m\}$ of $\{1,\ldots,n\}$ is called \emph{non-crossing} if the
sets $\{\exp(2 \mathbf{i}k \pi / n) \tq k \in T^i\}$ have pairwise disjoint
convex hulls in the complex plane. For instance, on the left of
Figure~\ref{fig:nocrosodoif}, is illustrated the fact that $\{ \{1\},
\{2,3\}, \{4,5,6\} \}$ is a non-crossing partition of $\{1,2,\dots, 6 \}$.

Now, for each subset $T$ of $\{1,\ldots,n\}$, let $x_T$ denote the dual braid
$\sigma_{t_1,t_2} \cdot \sigma_{t_2,t_3} \cdot \ldots \cdot
\sigma_{t_{k-1},t_k}$, where $t_1 < t_2 < \ldots < t_k$ are the elements
of~$T$. Then, for each non-crossing partition $\mathbf{T} =
\{T^1,\ldots,T^m\}$ of $\{1,\ldots,n\}$, we denote by $x_\mathbf{T}$ the
(commutative) product $x_{T^1} \cdot \ldots \cdot x_{T^m}$. It is
known~\cite{birman1998new,bessis02} that the mapping $\mathbf{T} \mapsto
x_{\mathbf{T}}$ is a lattice isomorphism from the set of non-crossing
partitions of $\{1,\ldots,n\}$ onto~$\SS_n$\,. Thus in particular, the
cardinality of $\SS_n$ is the Catalan number~$\frac{1}{n+1} \binom{2n}{n}$. In
this representation, $\unit$~corresponds to the finest partition
$\{\{1\},\ldots,\{n\}\}$, and $\delta_n$ to the coarsest partition
$\{\{1,\ldots,n\}\}$.  See Figure~\ref{fig:nocrosodoif}.

\begin{figure}
\begin{align*}
\begin{tikzpicture}
\node at (1,0) {$\bullet$};
\node at (.5,.86) {$\bullet$};
\node at (-.5,-.86) {$\bullet$};
\node at (-1,0) {$\bullet$};
\node at (-.5,.86) {$\bullet$};
\node at (.5,-.86) {$\bullet$};
\node at (1.3,0) {$1$};
\node at (.5,1.2) {$2$};
\node at (-.5,1.2) {$3$};
\node at (-1.3,0) {$4$};
\node at (-.5,-1.2) {$5$};
\node at (.5,-1.2) {$6$};
\draw (0,0) circle (1);
\draw (.5,.86) -- (-.5,.86);
\draw (-1,0) -- (-.5,-.86);
\draw (-.5,-.86) -- (.5,-.86);
\draw (.5,-.86) -- (-1,0);
\end{tikzpicture}
&&
\begin{tikzpicture}
\node at (1,0) {$\bullet$};
\node at (.5,.86) {$\bullet$};
\node at (-.5,-.86) {$\bullet$};
\node at (-1,0) {$\bullet$};
\node at (-.5,.86) {$\bullet$};
\node at (.5,-.86) {$\bullet$};
\node at (1.3,0) {$1$};
\node at (.5,1.2) {$2$};
\node at (-.5,1.2) {$3$};
\node at (-1.3,0) {$4$};
\node at (-.5,-1.2) {$5$};
\node at (.5,-1.2) {$6$};
\draw (0,0) circle (1);
\draw (-.5,-.86) -- (.5,-.86);
\draw (.5,-.86) -- (1,0);
\draw (1,0) -- (-.5,-.86);
\end{tikzpicture}
&&\begin{tikzpicture}
\node at (1,0) {$\bullet$};
\node at (.5,.86) {$\bullet$};
\node at (-.5,-.86) {$\bullet$};
\node at (-1,0) {$\bullet$};
\node at (-.5,.86) {$\bullet$};
\node at (.5,-.86) {$\bullet$};
\node at (1.3,0) {$1$};
\node at (.5,1.2) {$2$};
\node at (-.5,1.2) {$3$};
\node at (-1.3,0) {$4$};
\node at (-.5,-1.2) {$5$};
\node at (.5,-1.2) {$6$};
\draw (0,0) circle (1);
\draw (.5,.86) -- (-.5,.86);
\draw (-1,0) -- (-.5,-.86);
\draw (-.5,-.86) -- (.5,-.86);
\draw (.5,-.86) -- (1,0);
\draw (1,0) -- (-1,0);
\end{tikzpicture}
\end{align*}
\caption{Left, representation of
  $\sigma_{2,3}\cdot\sigma_{4,5}\cdot\sigma_{5,6}$ in $\dbr 6$ as a
  non-crossing partition of $\{1,\ldots,6\}$. Middle, representation
  of~$\sigma_{1,5}\cdot\sigma_{5,6}$\,. Right, representation of their
  least upper bound
  $(\sigma_{2,3}\cdot\sigma_{4,5}\cdot\sigma_{5,6})\veel(\sigma_{1,5}\cdot\sigma_{5,6})
=\sigma_{2,3}\cdot\sigma_{1,4}\cdot\sigma_{4,5}\cdot\sigma_{5,6}$\,.}
  \label{fig:nocrosodoif}
\end{figure}
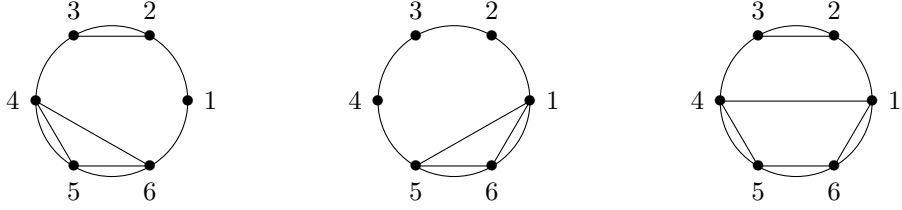

\paragraph*{Running examples for $n=3$.}

Let us consider the case  $n=3$. For~$\br 3$\,, the correspondence between
simple braids and permutations of $\{1,2,3\}$ is the following:
\begin{align*}
  \unit&=\text{Id}&\sigma_1&=(1,2)&\sigma_2&=(2,3)\\
\sigma_1\cdot\sigma_2&=(1,2,3)&\sigma_2\cdot\sigma_1&=(1,3,2)&\Delta&=(3,1)
\end{align*}

Simple braids of $\dbr 3$ correspond to non-crossing partitions of
$\{1,2,3\}$, which in this case are simply all the partitions of $\{1,2,3\}$.
The correspondence is the following, where singletons are omitted:
\begin{align*}
  \unit&=\bigl\{\bigr\}&
\sigma_{1,2}&=\bigl\{\{1,2\}\bigr\}&
\sigma_{2,3}&=\bigl\{\{2,3\}\bigr\}&
\sigma_{1,3}&=\bigl\{\{1,3\}\bigr\}&
\delta_3&=\bigl\{\{1,2,3\}\bigr\}
\end{align*}

\section{Garside normal form and combinatorics of braids}
\label{sec:garside-normal-form}

Braids are known to admit normal forms, that is to say, a unique
combinatorial representation for each braid. Normal forms are the standard
tool for several algorithmic problems related to braids, for instance the
word problem to cite one of the most emblematic~\cite{dehornor08}. Among the
several normal forms for braids introduced in the literature, we shall focus
in this work on the Garside normal form which derives from the Garside
structure attached to our braid monoids, as recalled above.

\subsection{Garside normal form of braids}
\label{sec:combinatorics-braids}

In the monoid $\bbr n$, and regardless on whether $\bbr n = \br n$ or $\bbr n
= \dbr n$, a sequence $(x_1,\ldots,x_k)$ of \emph{simple
  braids} is said to be \emph{normal} if
$x_j=\Delta \wedgel (x_j\cdot\ldots\cdot x_k)$ holds for all $j=1,\ldots,k$.
Intuitively, this is a maximality property, meaning that no left divisor of
$x_{j+1} \dotsm x_k$ could be moved to $x_j$ while remaining in the world of
simple braids. We recall the two following well known facts concerning normal
sequences of braids:
\begin{enumerate}
\item\label{item:2} For $x,y\in\SS_n$, let $x\to y$ denote the relation
    $\Dr(x)\supseteq\Dl(y)$, where the sets $\Dr(x)$ and $\Dl(y)$ are
    defined as follows:
\begin{align}
\label{eq:9}
\Dr(x)& = \bigl\{ \sigma \in \Sigma \tq x\cdot \sigma \notin
        \SS_n\}\,,
&\Dl(y) &=\bigl\{\sigma \in \Sigma \tq \sigma\leql y\bigr\}\,.
\end{align}
Then a sequence $(x_1,\ldots,x_k)$ is normal if and only if $x_j\to
x_{j+1}$ holds for all $j=1,\ldots,k-1$, again meaning that left divisors
of $x_{j+1}$ are already present in $x_j$, and therefore cannot be pushed
into $x_j$ while keeping it simple.
\item For every non-unit braid $x\in\bbr n$, there exists a unique normal
    sequence $(x_1,\ldots,x_k)$ of non-unit simple braids such that
    $x=x_1\cdot\ldots\cdot x_k$. This sequence is called the \emph{Garside
    normal form} or \emph{decomposition} of~$x$.
\end{enumerate}

In this work, the integer $k$ is called the \emph{height} of~$x$ (it is also
called the \emph{supremum} of $x$ in the literature~\cite{ECHLPT}). We denote
it by~$\height(x)$.

Regarding the special elements $\unit$ and~$\Delta$, the following dual
relations hold, meaning that $\unit$ is ``final'' whereas $\Delta$ is
``initial'':
\begin{align*}
\forall x\in\SS_n\quad x &\to \unit & \forall x\in\SS_n\quad
  \unit\to x &\iff x=\unit \\
\forall x\in\SS_n\quad \Delta &\to x & \forall x\in\SS_n\quad  x \to \Delta &\iff x=\Delta
\end{align*}

Therefore, the Garside decomposition starts with a finite (possibly zero)
number of~$\Delta$s, and then is given by a finite path in the finite
directed graph $(\SS_n \setminus \{\Delta, \unit\},\to)$. By convention, we
define the Garside normal form of the unit braid $\unit$ as the sequence
$(\unit)$, and we put $\height(\unit)=1$. (It might seem that
$\height(\unit)=0$ would be a more natural convention, but it turns out that
taking $\height(\unit)=1$ is the good choice for convenient formulation of
several results below as it encompasses the fact that, in normal forms,
$\unit$ can not be followed by any letter. For instance,
Lemma~\ref{lem:full-visual-is-union-of-garside} would not hold otherwise.)
Then, it is a well-known property of Garside monoids that $\height(x)$ is the
least positive integer $k$ such that $x$ is a product of $k$ simple braids.

Moreover, it will be convenient to complete the normal form of a braid with
infinitely many factors~$\unit$. We call the infinite sequence
$(x_k)_{k\geq1}$ of simple braids thus obtained the \emph{extended
  Garside decomposition} of the braid. The directed graph
$(\SS_n,\to)$ is then their accepting graph: extended Garside decompositions
of braids correspond bijectively to infinite paths in~$(\SS_n,\to)$ that
eventually hit~$\unit$, and then necessarily stay in $\unit$ forever.

\begin{remark}
\label{rem:4} Following up on Remark~\ref{rem:5}, just as simplicity was
observed not to be invariant on~$\br n$, the height and the Garside normal
forms are not invariant on~$\br n$\,.  For instance, the braid $\Delta_n$ has
Garside normal form the sequence $(\Delta_n)$ itself in~$\br n$, and the
sequence $(\delta_n,\delta_{n-1},\ldots,\delta_2)$ in~$\dbr n$.  Its height
is $1$ in $\br n$ and $n-1$ in~$\dbr n$.
  %
\end{remark}

We gather in the following proposition some well-known properties of Garside
normal forms~\cite{dehornoy2013foundations} that we shall use later.
\begin{proposition}
\label{proposition:dehornoy2013foundations} For all braids $x,y \in \bbr n$:
\begin{enumerate}
\item\label{pro:garside-1} the height $\height(x)$ is the smallest integer
    $k\geq 1$ such that $x\leql \Delta^k$;
\item\label{pro:garside-2} if $(x_1,\ldots,x_k)$ is the normal form of $x$,
    then $x_1\cdot\ldots\cdot x_j=x\wedgel \Delta^j$ for all
    $j\in\{1,\ldots,k\}$ ;
\item\label{pro:garside-3} $x\leql y\iff x\leq y\wedgel
    \Delta^{\height(x)}$.
\end{enumerate}
\end{proposition}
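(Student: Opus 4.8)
**The plan is to prove the three statements in order, since each rests on the normality characterization and the lattice structure recalled above.**

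For item~\eqref{pro:garside-1}, I would argue that $\height(x)=k$ exactly when $x$ is a product of $k$ simple braids but not of $k-1$ (this is stated just before the proposition). The key observation is that a product $s_1\cdot\ldots\cdot s_m$ of $m$ simple braids is a left divisor of $\Delta^m$: indeed each $s_i\leql\Delta$, and since $\Delta$ is a Garside element, its left divisors equal its right divisors, so one can push the $\Delta$'s to the left one factor at a time. Conversely, if $x\leql\Delta^k$, then $\Delta^k=x\cdot z$ for some $z$; I would read off a decomposition of $x$ into at most $k$ simple factors by taking the Garside normal form of $\Delta^k$ and truncating, or more directly by induction using that $x\wedgel\Delta$ is the first simple factor of $x$. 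This shows the least $k$ with $x\leql\Delta^k$ coincides with the least $k$ expressing $x$ as a product of $k$ simples, which is $\height(x)$.

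For item~\eqref{pro:garside-2}, let $(x_1,\ldots,x_k)$ be the normal form of $x$, so $x=x_1\cdot\ldots\cdot x_k$ and $\height(x)=k$. Fix $j\in\{1,\ldots,k\}$ and set $u=x_1\cdot\ldots\cdot x_j$. By item~\eqref{pro:garside-1} applied to the sub-product $u$, which is a product of $j$ simple braids, we have $u\leql\Delta^j$, hence $u\leql x\wedgel\Delta^j$. For the reverse inequality I would use the normality condition $x_j=\Delta\wedgel(x_j\cdot\ldots\cdot x_k)$ together with cancellativity: writing $x=u\cdot(x_{j+1}\cdot\ldots\cdot x_k)$, any common left divisor $v$ of $x$ and $\Delta^j$ satisfies $v\leql\Delta^j$, and I would show by descending induction on $j$ that $v\leql u$, the inductive step being exactly the maximality encoded in the relation $x_j\to x_{j+1}$, i.e.\ that no further simple factor can be absorbed. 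Thus $x\wedgel\Delta^j=u$.

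For item~\eqref{pro:garside-3}, the forward direction is immediate: if $x\leql y$ then $x\leql\Delta^{\height(x)}$ by item~\eqref{pro:garside-1}, so $x\leql y\wedgel\Delta^{\height(x)}$. For the converse, suppose $x\leql y\wedgel\Delta^{\height(x)}=:w$; then $w\leql y$, so it suffices to note $x\leql w$ is the hypothesis, giving $x\leql y$ by transitivity — so the real content is that the meet on the right genuinely dominates $x$, which is guaranteed precisely because $x\leql\Delta^{\height(x)}$ makes $x$ a common lower bound of $y$ and $\Delta^{\height(x)}$ whenever $x\leql y$. I would therefore phrase the converse as: $x\leql w$ and $w\leql y$ yield $x\leql y$ by transitivity of $\leql$. \textbf{The main obstacle} I anticipate is item~\eqref{pro:garside-2}, specifically the reverse inequality $x\wedgel\Delta^j\leql x_1\cdot\ldots\cdot x_j$; this is where the definition of normality must be used in full force rather than just the factorwise bound $x_i\leql\Delta$, and it is the step where cancellativity and the Garside meet structure both come into play.
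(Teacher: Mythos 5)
Your proposal cannot be compared to an in-paper argument: the paper offers no proof of this proposition, quoting it as a collection of well-known facts with a citation to Dehornoy et al.'s \emph{Foundations of Garside Theory}. Judged on its own terms, parts of your plan are sound. The forward half of item~\ref{pro:garside-1} (a product of $m$ simple braids left-divides $\Delta^m$) is correctly reduced to the coincidence of left and right divisors of $\Delta$: writing $\Delta=s\cdot\partial(s)$, one gets $s\cdot\Delta=\Delta\cdot\partial^2(s)$ for every simple $s$, which lets you push $\Delta$'s to the left exactly as you describe. Item~\ref{pro:garside-3} is, as you note, an immediate consequence of item~\ref{pro:garside-1} and transitivity of $\leql$; the hedging after your dash is unnecessary but harmless.

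The genuine gap is the step you yourself flag, and it occurs twice: in the converse of item~\ref{pro:garside-1} and in the reverse inequality of item~\ref{pro:garside-2}. Both reduce to one claim: if $v\leql\Delta^j$ and $v=v_1\cdot v'$ with $v_1=v\wedgel\Delta$, then $v'\leql\Delta^{j-1}$. Your text never proves this; it appeals to ``the maximality encoded in $x_j\to x_{j+1}$'', but maximality of the head only yields $v'\wedgel\partial(v_1)=\unit$, while left-cancelling $v_1$ in $\Delta^j=v_1\cdot v'\cdot t=v_1\cdot\partial(v_1)\cdot\Delta^{j-1}$ only yields $v'\leql\partial(v_1)\cdot\Delta^{j-1}$. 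Getting from these two facts to $v'\leql\Delta^{j-1}$ is precisely the non-trivial content of the whole proposition; in textbook treatments it rests on identities such as $(s\cdot y)\wedgel\Delta=s\cdot\bigl(y\wedgel\partial(s)\bigr)$ for $s$ simple (head absorption), combined with an induction (the ``domino'' arguments), and no restatement of the normality condition substitutes for that machinery. Two further structural remarks: your alternative suggestion for item~\ref{pro:garside-1}, truncating the Garside normal form of $\Delta^k$, is a non-starter, since that normal form is $(\Delta,\ldots,\Delta)$ and carries no information about a decomposition of the divisor $x$; and your descending induction for item~\ref{pro:garside-2} is structurally fine (via $x\wedgel\Delta^j=(x\wedgel\Delta^{j+1})\wedgel\Delta^j$ it reduces to the case $j=k-1$), but that remaining case is again exactly the unproved claim above. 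So the skeleton and the easy parts are right, but the crux is missing.
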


\begin{remark}
  \label{rem:1}
We should stress that, while the normal form is very convenient to enumerate
braids, it behaves poorly with respect to multiplication: consider $x$ with
height $k$ and Garside decomposition $(x_1,\dotsc,x_k)$, and let $\sigma$ be
a generator. Then, $y=x\cdot \sigma$ has height in $\{k, k+1\}$, but if it
has height $k$ then the normal form $y=(y_1,\dotsc, y_k)$ might be completely
different from that of $x$ (in the sense that $y_1\neq x_1,\dotsc, y_k \neq
x_k$), although it is algorithmically computable.
\end{remark}

\paragraph*{Running examples for $n=3$.}

Let us describe explicitly the accepting graphs $(\SS_3,\to)$ for $\br 3$ and
for~$\dbr 3$. Consider first the case of~$\br 3$\,. The subsets $\Dl(x)$ and
$\Dr(x)$ are easily computed through their definition~(\ref{eq:9}), from
which the relation $\to$ is derived. The results of these computations are
depicted in Figure~\ref{fig:acceptorB3}. The analogous computations for $\dbr
3$ result in the data pictured in Figure~\ref{fig:acceptordual}.

\begin{figure}
  \begin{align*}
    \begin{array}[t]{rcll}
      \Dl(x)&x\in\SS_3&\Dr(x)&\{y\in\SS_3\tq x\to y\}\\
\hline
\emptyset&\unit&\emptyset&\{\unit\}\\
\{\sigma_1\}&\sigma_1&\{\sigma_1\}&\{\unit,\sigma_1,\sigma_1\cdot\sigma_2\}\\
\{\sigma_2\}&\sigma_2&\{\sigma_2\}&\{\unit,\sigma_2,\sigma_2\cdot\sigma_1\}\\
\{\sigma_1\}&\sigma_1\cdot\sigma_2&\{\sigma_2\}&\{\unit,\sigma_2,\sigma_2\cdot\sigma_1\}\\
\{\sigma_2\}&\sigma_2\cdot\sigma_1&\{\sigma_1\}&\{\unit,\sigma_1,\sigma_1\cdot\sigma_2\}\\
\Sigma&\Delta_3&\Sigma&\SS_3
    \end{array}&&
\xymatrix{
&*+[F]{\strut\sigma_1}\POS!U!L\ar@(ul,ur)!U!R
\POS[]\ar[r]
\POS[]\POS!D!R\ar[drr]!U!L
&*+[F]{\strut\sigma_1\cdot\sigma_2}
\ar@{<->}[dd]
\POS!R\ar[dr]!U
\POS[]\POS!D!L(.5)\ar[ddl]!U
\\
*+[F]{\strut\Delta}
\POS!U!L\ar@(ul,dl)!D!L
\POS!R\POS!U\ar[ur]!D!L
\POS[]\POS!R!U(.5)\ar[urr]!D!L
\POS[]\ar[rrr]
\POS[]\POS!R\POS!D(.5)\ar[drr]!U!L
\POS[]\POS!R\POS!D\ar[dr]!U!L
&&&*+[F]{\strut\unit}
\POS!U!R\ar@(ur,dr)!D!R
\\
&*+[F]{\strut\sigma_2}
\POS!D!L\ar@(dl,dr)!D!R
\POS[]\ar[r]
\POS[]\POS!R!U\ar[urr]!D!L
&*+[F]{\strut\sigma_2\cdot\sigma_1}
\POS!R\ar[ur]!D
\POS[]\POS!U!L(.5)\ar[uul]!D
}\\
  \end{align*}
  \caption{Left hand side: subsets $\Dl(x)$ and $\Dr(x)$ for simple
    braids $x\in\br 3$. Right hand side: the resulting accepting
    graph $(\SS_3,\to)$ for $\br 3$}
  \label{fig:acceptorB3}
\end{figure}

\begin{figure}
  \begin{align*}
    \begin{array}[t]{rcll}
      \Dl(x)&x\in\SS_3&\Dr(x)&\{y\in\SS_3\tq x\to y\}\\
\hline
\emptyset&\unit&\emptyset&\{\unit\}\\
\{\sigma_{1,2}\}&\sigma_{1,2}&\{\sigma_{1,2},\;\sigma_{1,3}\}&\{\unit,\sigma_{1,2},\;\sigma_{1,3}\}\\
\{\sigma_{2,3}\}&\sigma_{2,3}&\{\sigma_{2,3},\; \sigma_{1,2}\}&\{\unit,\sigma_{2,3},\; \sigma_{1,2}\}\\
\{\sigma_{1,3}\}&\sigma_{1,3}&\{\sigma_{1,3},\;\sigma_{2,3}\}&\{\unit,\sigma_{1,3},\;\sigma_{2,3}\}\\
\Sigma&\delta_3&\Sigma&\SS_3
    \end{array}  &&
\xymatrix{
*+[F]{\strut\unit}\POS!U!L\ar@(ul,ur)!U!R
&&*+[F]{\strut\sigma_{2,3}}\POS!D!R\ar@(dr,ur)!U!R\POS!L!U(.25)\ar[dll]!U!R\POS[]\ar[ll]\\
*+[F]{\strut\sigma_{1,2}}\POS!D!L\ar@(dl,ul)!U!L\POS[]\ar[rr]\ar[u]&&
*+[F]{\strut\sigma_{1,3}}\POS!D!R\ar@(dr,ur)!U!R\POS[]\ar[u]\POS!U!L\ar[ull]\\
&*+[F]{\strut\delta_3}\POS!D!L\ar@(dl,dr)!D!R\POS[]!U\ar[uul]\POS!U!L\ar[ul]!D!R
\POS!R(.75)\ar[uur]!D!L\POS!R\ar[ur]!D!L
}\\
  \end{align*}
  \caption{Left hand side: subsets $\Dl(x)$ and $\Dr(x)$ for simple
    braids $x\in\dbr 3$. Right hand side: the resulting accepting
    graph $(\SS_3,\to)$ for $\dbr 3$}
  \label{fig:acceptordual}
\end{figure}

\subsection{Combinatorics of braids}
\label{sec:combinatorics-braids-1}

How many braids $x\in\bbr n$ of length $k$ are there? Is there either an
exact or an approximate formula? The aim of this subsection is to recall the
classical answers to these questions, which we will do by analyzing the
ordinary generating function, or growth series, of~$\bbr n$\,. This series
turns out to be rational, and the Garside normal form is an efficient tool
for the study of its coefficients.

\subsubsection{Growth series of braid monoids and Möbius polynomial}
\label{sec:growth-series-braid}

Let $G_n(t)$ be the growth series of~$\bbr n$. It is defined by:
\[
  G_n(t)=\sum_{x\in\bbr n}t^{\abs{x}}\,.
\]

According to a well-known result~\cite{deligne72,charney95,bronfman01}, the
growth function of $\bbr n$ is rational, inverse of a polynomial:
\begin{align}
\label{eq:2}
  G_n(t)&=\frac1{H_n(t)}\,,&
\text{with\quad}  H_n(t)&=\sum_{X\subseteq \Sigma}(-1)^{\abs{X}}\,t^{\abs{\Delta_X}}\,.
\end{align}

There exist explicit or recursive formul\ae\ allowing to compute
effectively~$H_n(t)$:
\begin{align*}
  H_n(t)&=\sum_{k=1}^n(-1)^{k+1}t^{\frac{k(k-1)}{2}}H_{n-k}(t) &&
                                                                 \text{ if } \bbr n = \br n\\
  H_n(t)&= \sum_{k=0}^{n-1} (-1)^k \frac{(n-1+k)!}{(n-1-k)!k!(k+1)!} t^k && \text{ if } \bbr n = \dbr n
\end{align*}

For reasons that will appear more clearly in a moment (see
Subsection~\ref{sec:two-mobi-transf}), the polynomial $H_n(t)$ deserves the
name of \emph{Möbius polynomial of $\bbr n$}.

\paragraph*{Running examples for $n=3$.}

For $\br 3$, the computation of the Möbius polynomial may be done as follows:
\begin{align*}
  H_3(t)&=1-t^{|\sigma_1|}-t^{|\sigma_2|}+t^{|\sigma_1\vee\sigma_2|}=1-2t+t^3\qquad\text{since
                                                                               $\sigma_1\vee\sigma_2=\Delta_3$}\\
\intertext{and similarly for $\dbr 3$:}
H_3(t)&=1-t^{|\sigma_{1,2}|}-t^{|\sigma_{2,3}|}-t^{|\sigma_{1,3}|}+t^{|\sigma_{1,2}\vee\sigma_{1,3}|}
+t^{|\sigma_{1,2}\vee\sigma_{2,3}|}+t^{|\sigma_{2,3}\vee\sigma_{1,3}|}-t^{|\delta_3|}\\
&=1-3t+2t^2\quad\qquad\text{since
  $\sigma_{1,2}\vee\sigma_{1,3}=\sigma_{1,2}\vee\sigma_{2,3}=\sigma_{2,3}\vee\sigma_{1,3}=\delta_3$}
\end{align*}

\subsubsection{Connectivity of the Charney graph}
\label{sec:charney-graph}

The growth series $G_n(t)$ is a rational series with non-negative
coefficients and with a finite positive radius of convergence, say~$q_n$\,,
which, by the Pringsheim theorem~\cite{flajolet09}, is necessarily itself a pole
of~$G_n(t)$. Since $G_n(t)=1/H_n(t)$ as recalled in~\eqref{eq:2}, it follows
that $q_n$ is a root of minimal modulus of the polynomial~$H_n(t)$.  In order
to evaluate the coefficients of $G_n(t)$, we shall prove that $G_n(t)$ has no
other pole of modulus~$q_n$\,, or equivalently, that $H_n(t)$ has no other
root of modulus~$q_n$\,. This is stated in Corollary~\ref{cor:6} below.

To this end, we first study the connectivity of the \emph{Charney
  graph}, which is the directed graph $\mathcal{G} = (V,E)$ with set
of vertices $V = \SS_n \setminus \{\unit,\Delta\}$ and set of edges $E =
\{(x,y) \in V^2 \tq x \to y\}$. The connectivity of $\mathcal{G}$ is well
known for $\bbr n=\br
n$~\cite{bestvina1999non,caruso2013genericity,gebhardt2014penetration}, and
actually the same result also holds for $\bbr n=\dbr n$\,, although it does
not seem to be found in the literature. We obtain thus the following result.

\begin{proposition}
\label{prop:3} For $n\geq3$, the Charney graph of $\bbr n$ is strongly
connected and contains loops.
\end{proposition}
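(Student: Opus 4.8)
The plan is to prove the two assertions separately: the existence of loops, which is elementary, and strong connectivity, which carries the real content. Throughout, write $V=\SS_n\setminus\{\unit,\Delta\}$ for the vertex set, and note that $n\geq3$ forces $\abs{\Sigma}\geq2$, so every atom $\sigma\in\Sigma$ satisfies $\unit\neq\sigma\neq\Delta$ and hence lies in $V$. For loops, observe that an atom $\sigma$ has $\Dl(\sigma)=\{\sigma\}$, while $\sigma\in\Dr(\sigma)$ because $\sigma^2$ is not simple; thus $\Dr(\sigma)\supseteq\Dl(\sigma)$, i.e. $\sigma\to\sigma$, so $\mathcal G$ has a loop at every atom. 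More generally I would prove the following, which also serves as the backbone of the connectivity argument: \emph{Lemma A: for every $X\subseteq\Sigma$ one has $\Dr(\Delta_X)\supseteq X$.}

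The quickest route to Lemma A uses the right complement $\partial s$ defined by $s\cdot\partial s=\Delta$: since $s\cdot\tau$ is simple iff $\tau\leql\partial s$, one has $\Dr(s)=\Sigma\setminus\Dl(\partial s)$. For $s=\Delta_X=\bigveel X=\bigveer X$ the element is balanced (its left and right divisors coincide, as recalled above), and one checks that $\Dl(\partial\Delta_X)=\Sigma\setminus X$, whence $\Dr(\Delta_X)=X$. In the permutation model for $\br n$ this is the statement that the longest element $\Delta_X$ of the parabolic generated by $X$ has right-descent set exactly $X$; in the non-crossing-partition model for $\dbr n$ it is the analogous, and slightly more delicate, computation — this is one of the two places where the two monoids must be handled by hand.

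For strong connectivity I would reduce everything to one climbing statement through three observations. First, every $x\in V$ has an out-edge to an atom: since $x\neq\unit$ one has $\Dr(x)\neq\emptyset$ (indeed $\Dr(x)=\emptyset$ gives $\Dl(\partial x)=\Sigma$, so $\partial x=\Delta$ and $x=\unit$), and any $\sigma\in\Dr(x)$ yields $x\to\sigma$. Second, every $y\in V$ has an in-edge from a balanced vertex: put $X=\Dl(y)$ and $z=\bigveel X=\Delta_X$; since all atoms of $X$ divide $y$ their join satisfies $z\leql y\lneq\Delta$, so $\unit\neq z\neq\Delta$ and $z\in V$, while Lemma A gives $\Dr(z)\supseteq X=\Dl(y)$, that is $z\to y$. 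Writing $x\rightsquigarrow y$ for reachability, every instance $x\rightsquigarrow y$ therefore factors as $x\rightsquigarrow\sigma\rightsquigarrow\Delta_{\Dl(y)}\rightsquigarrow y$, and strong connectivity follows once one proves that \emph{from any atom one can reach $\Delta_X$ for every $X\subseteq\Sigma$ with $\Delta_X\neq\Delta$.}

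This last point is the main obstacle, and the difficulty is genuine: among balanced vertices the relation restricts to $\Delta_X\to\Delta_Y\iff X\supseteq Y$, so the $\Delta_X$ alone form a downward acyclic hierarchy, and climbing from a singleton back up to a large $X$ is impossible without using genuinely asymmetric simples — elements $w$ with a small left-descent set $\Dl(w)$ (so $w$ is reachable from an atom) but a large right-descent set $\Dr(w)$ (so $w\to\Delta_X$ for large $X$). I would produce these explicitly: in $\br 3$ the elements $\sigma_1\cdot\sigma_2$ and $\sigma_2\cdot\sigma_1$ already suffice, and for general $n$ one exhibits, for each atom $\tau$, a simple with a single left descent and right-descent set $\Sigma\setminus\{\tau\}$, then chains $\sigma\to w\to\Delta_X$ for any $X$ avoiding $\tau$ and covers all proper $X$ by varying $\tau$. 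For $\br n$ the existence of such asymmetric permutations is exactly the known strong connectivity of the Charney graph, which I would cite from \cite{bestvina1999non,caruso2013genericity,gebhardt2014penetration}; for $\dbr n$ I would carry out the same construction in the non-crossing-partition model, computing $\Dl$ and $\Dr$ of a well-chosen family of partitions. This explicit description of left and right descents of dual simples, together with the $\dbr n$ case of Lemma A, is where essentially all the work lies, the braid case requiring only the cited input.
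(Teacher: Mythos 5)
Your scaffolding is sound where it overlaps with the easy parts of the paper's proof: the loop at each atom, the citation of the literature for $\br n$, and the reduction of strong connectivity to reaching the elements $\Delta_X$ from atoms (whose two supporting steps — $\Dr(x)\neq\emptyset$ for $x\neq\unit$, and $\Delta_{\Dl(y)}\to y$ — are both correct). The genuine gap is the dual monoid $\dbr n$, which is the \emph{only} case the paper actually has to prove, and there your proposal both leaves the work undone and rests on claims that are false. First, your route to Lemma A asserts $\Dl(\partial\Delta_X)=\Sigma\setminus X$, hence $\Dr(\Delta_X)=X$; this is the parabolic-longest-element fact in $\br n$, but it fails in $\dbr n$: already in $\dbr 3$, with $X=\{\sigma_{1,2}\}$ one has $\Dr(\sigma_{1,2})=\{\sigma_{1,2},\sigma_{1,3}\}\supsetneq X$. (The containment $X\subseteq\Dr(\Delta_X)$ that you actually use is true and easily repaired: each $\sigma\in X$ satisfies $\sigma\leqr\Delta_X$, so $\sigma^2\leqr\Delta_X\cdot\sigma$, and a simple braid cannot have the non-simple $\sigma^2$ as a right divisor.) Second, and more seriously, your structural picture — ``the $\Delta_X$ alone form a downward acyclic hierarchy, $\Delta_X\to\Delta_Y\iff X\supseteq Y$'' — is false in $\dbr n$: in $\dbr 3$ there is the directed cycle $\sigma_{1,2}\to\sigma_{1,3}\to\sigma_{2,3}\to\sigma_{1,2}$, all three vertices being of the form $\Delta_X$. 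Worse, in $\dbr n$ the map $X\mapsto\Delta_X$ is \emph{onto} $\SS_n$, so every vertex of the dual Charney graph is ``balanced''; if your hierarchy claim held there, the graph could not be strongly connected at all, i.e.\ your guiding picture contradicts the very proposition you are proving. The dichotomy on which your plan is built (an acyclic skeleton of balanced vertices, plus genuinely asymmetric simples that do the climbing) is an artifact of $\br n$ and collapses in the dual case; for the same reason, your reduction buys almost nothing in $\dbr n$, where ``reaching all $\Delta_X$ from atoms'' \emph{is} the full problem, up to normalizing the source.

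What you defer in your final sentences is therefore not a routine verification but the entire content of the paper's proof. The paper records the explicit descent sets of a dual simple $x_{\mathbf T}$ in the non-crossing-partition model, namely $\Dl(x_{\mathbf T})=\{\sigma_{u,v}\tq \exists T\in\mathbf T,\ u,v\in T\}$ and $\Dr(x_{\mathbf T})=\{\sigma_{u,v}\tq \exists T\in\mathbf T,\ T\cap\{u+1,\ldots,v\}\neq\emptyset \text{ and } T\cap(\{1,\ldots,u\}\cup\{v+1,\ldots,n\})\neq\emptyset\}$, uses the rotation automorphism $\sigma_{i,j}\mapsto\sigma_{i+1,j+1}$ to normalize the target $z$, and then exhibits a concrete path $y\to\sigma_{a,b}\to\sigma_{1,a}\to\sigma_{1,n}\to\sigma_{2,n}\to\sigma_{1,m+1}\to x_{\mathbf T}\to x_{\mathbf U}\to x_{\mathbf V}\to z$ through explicitly chosen partitions $\mathbf T$, $\mathbf U$, $\mathbf V$. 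Until you carry out such a construction (or an equivalent one), your text is a plan whose guiding intuition fails precisely on the monoid that needs proving, not a proof.
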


\begin{proof}
First, observe that the graph $\mathcal{G}$ contains the loop $\sigma \to
\sigma$ for every generator $\sigma \in \Sigma$.  Since proofs of the strong
connectivity of $\mathcal{G}$ are found in the literature when $\bbr n = \br
n$, we focus on proving that $\mathcal{G}$ is strongly connected when $\bbr n
= \dbr n$.

Recall that simple braids are in bijection with non-crossing partitions of
$\{1,\ldots,n\}$.  For each subset $T$ of $\{1,\ldots,n\}$, we denote by
$x_T$ the braid $\sigma_{t_1,t_2} \cdot \sigma_{t_2,t_3} \cdot \ldots \cdot
\sigma_{t_{k-1},t_k}$, where $t_1 < t_2 < \ldots < t_k$ are the elements
of~$T$.  Then, for each non-crossing partition $\mathbf{T} =
\{T^1,\ldots,T^m\}$ of $\{1,\ldots,n\}$, we denote by $x_\mathbf{T}$ the
(commutative) product $x_{T^1} \cdot \ldots \cdot x_{T^m}$.  It is
known~\cite{birman1998new} that
\begin{inparaenum}[(1)]
\item the mapping $\mathbf{T} \mapsto x_{\mathbf{T}}$ is a bijection
  from the set of non-crossing partitions of $\{1,\ldots,n\}$ to
  $\SS_n$, as mentioned in Subsection~\ref{sec:comb-repr-braids};
\item the set $\Dl(x_{\mathbf{T}})$ is equal to
  $\{\sigma_{u,v} \tq \exists T \in \mathbf{T}\quad u,v \in
  T\}$;
\item the set $\Dr(x_{\mathbf{T}})$ is equal to
  \[
\bigl\{\sigma_{u,v} \tq \exists T \in \mathbf{T} \quad
  T \cap \{u+1,\ldots,v\} \neq \emptyset \text{ and } T \cap \{1,\ldots,u,v+1,\ldots,n\} \neq \emptyset\bigr\}\,.
\]
\end{inparaenum}

Hence, consider two braids $y,z \in \SS_n \setminus\{\unit,\Delta\}$, and let
$m = \lfloor \frac{n}{2} \rfloor$, as well as the set $A =
\{\sigma_{1,2},\ldots,\sigma_{n-1,n},\sigma_{n,1}\}$.  Since $z <_{\text{l}}
\Delta$, we know that $z = x_{\mathbf{Z}}$ where $\mathbf{Z}$ is a partition
of $\{1,\ldots,n\}$ in at least two subsets.  It follows that $\mathbf{Z}$ is
a refinement of a non-crossing partition $\mathbf{V}$ of $\{1,\ldots,n\}$ in
\emph{exactly} two subsets.  The map $\sigma_{i,j} \mapsto \sigma_{i+1,j+1}$
induces an automorphism of the dual braid monoid~$\dbr n$, hence we assume
without loss of generality that $\mathbf{V} =
\bigl\{\{1,\ldots,i,n\},\{i+1,\ldots,n-1\}\bigr\}$ for some integer $i \in
\{0,\ldots,m-1\}$.

Finally, consider some generator $\sigma_{a,b} \in \Dl(y)$, with
$a < b$.  Since $\Dl(y) \subseteq \Dr(y)$, it comes that that
$y \to \sigma_{a,b} \to \sigma_{1,a} \to \sigma_{1,n}\to \sigma_{2,n}$
if $1 < a$, or $y \to \sigma_{1,b} \to \sigma_{1,n} \to \sigma_{2,n}$
if $a = 1$. Since $2 \leq m+1 < n$, we then observe that
$\sigma_{2,n} \to \sigma_{1,m+1} \to x_{\mathbf{T}} \to x_{\mathbf{U}}
\to x_{\mathbf{V}} \to x_{\mathbf{Z}} = z$, where
\begin{align*}
 \mathbf{T} &=
              \begin{cases}
                \bigl\{\{u,n+1-u\} \tq 1 \leq u \leq m\bigr\} & \text{if $n$ is even,} \\
                \bigl\{\{u,n+1-u\} \tq 1 \leq u \leq m\bigr\} \cup
                \bigl\{\{m+1\}\bigr\} & \text{if $n$ is odd;}
\end{cases}\\
 \mathbf{U} &=  \bigl\{\{1,\ldots,m\},\{m+1,\ldots,n\}\bigr\}.
\end{align*}
This completes the proof.
\end{proof}

The connectivity of the Charney graph stated in the above proposition has
several consequences on the combinatorics of braids, which we gather in the
following corollary, the result of which seems to have been unnoticed so far.

\begin{corollary}
  \label{cor:6}
  The Möbius polynomial $H_n(t)$ has a unique root of smallest
  modulus. This root, say~$q_n$\,, is real and lies in $(0,1)$ and it
  is simple. It coincides with the radius of convergence of the growth
  series~$G_n(t)$.

  Furthermore, for each integer $k\geq0$, put
  $\lambda_n(k)=\# \{x\in\bbr n\tq\abs{x}=k\}\,.$ Then, for $n\geq3$,
  the following asymptotics hold for some constant $C_n>0$:
  \begin{gather}
    \label{eq:12}
    \lambda_n(k)\sim_{k\to\infty} C_n q_n^{-k}\,.
  \end{gather}
\end{corollary}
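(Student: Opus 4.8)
The plan is to prove Corollary~\ref{cor:6} by combining the Pringsheim-type analysis already sketched in the text with the Perron--Frobenius theory applied to the accepting graph, using the strong connectivity established in Proposition~\ref{prop:3}. First I would recall from~\eqref{eq:2} that $G_n(t)=1/H_n(t)$ is a rational series with non-negative coefficients, so by the Pringsheim theorem its radius of convergence $q_n$ is itself a root of $H_n$, and necessarily $q_n\in(0,1)$: it is positive because $G_n$ has positive radius, and strictly below $1$ because the coefficients $\lambda_n(k)$ grow (there are at least $|\Sigma|^{\,?}$-many words, and in any case $\lambda_n(k)\to\infty$), forcing $q_n<1$. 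The real content is to show $q_n$ is the \emph{unique} root of minimal modulus and that it is \emph{simple}; the asymptotic~\eqref{eq:12} then follows by a standard transfer/partial-fraction argument.

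The key idea is to realize the counting sequence via the accepting graph $(\SS_n,\to)$. Because extended Garside decompositions of braids of height $k$ correspond bijectively to paths of a fixed combinatorial type in the finite graph $(\SS_n,\to)$, the generating function $G_n(t)$ can be expressed through the resolvent $(\Id - tM)^{-1}$ of a suitable non-negative transition matrix $M$ built from the relation $\to$ restricted to the Charney graph $\GG=\SS_n\setminus\{\unit,\Delta\}$, graded by the lengths $\abs{x}$ of the simple braids involved. Concretely, I would introduce the matrix $M$ indexed by $V=\SS_n\setminus\{\unit,\Delta\}$ with $M_{x,y}=t^{\abs{y}}$ when $x\to y$ and $0$ otherwise, so that the dominant singularities of $G_n$ are governed by the values of $t$ at which the spectral radius of $M(t)$ reaches $1$. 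Since $\GG$ is strongly connected and contains loops (Proposition~\ref{prop:3}), the matrix $M(t)$ is, for each fixed $t>0$, an irreducible non-negative aperiodic matrix, so the Perron--Frobenius theorem applies.

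The heart of the argument is then the Perron--Frobenius dichotomy: for an irreducible aperiodic non-negative matrix, the spectral radius is a simple eigenvalue and is strictly larger in modulus than every other eigenvalue. Translating this back, the real positive value $q_n$ at which the Perron eigenvalue of $M(t)$ equals $1$ is a simple pole of $(\Id-tM(t))^{-1}$, hence a simple root of $H_n(t)$, and no other root of $H_n$ can have modulus $\leq q_n$: any such root would correspond to an eigenvalue of $M$ of modulus $1$ distinct from the Perron value, contradicting aperiodic irreducibility. This gives simultaneously the uniqueness, the simplicity, and the strict domination. I expect the main obstacle to be the careful bookkeeping that ties $H_n(t)$ (defined by the inclusion--exclusion formula over $\Delta_X$) to the characteristic determinant $\det(\Id - tM(t))$ of the graph matrix: one must verify that these differ only by a nonvanishing factor near $q_n$ (accounting for the special vertices $\unit$ and $\Delta$, and the extra $\Delta$-prefixes in Garside decompositions), so that minimal-modulus roots of $H_n$ correspond exactly to the Perron singularity of the Charney matrix.

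Once uniqueness and simplicity of $q_n$ are in hand, the asymptotic~\eqref{eq:12} is routine: writing $G_n(t)=1/H_n(t)$ and performing a partial fraction decomposition, the simple pole at $t=q_n$ contributes a term $\tfrac{C_n}{1-t/q_n}$ whose coefficients are exactly $C_n\,q_n^{-k}$, while all other poles have strictly larger modulus and thus contribute exponentially smaller corrections. The positivity $C_n>0$ follows since $C_n=-1/(q_n H_n'(q_n))$ and $q_n$ is a simple real root with the sign of $H_n'(q_n)$ determined by $H_n$ decreasing through zero at its least positive root. This yields $\lambda_n(k)\sim C_n q_n^{-k}$ as claimed.
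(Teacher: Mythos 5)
Your overall strategy (a transfer matrix on the Charney graph plus Perron--Frobenius, fed by Proposition~\ref{prop:3}) is the same as the paper's, but the decisive step is wrong as written. You claim that a root $t_0$ of $H_n$ with $|t_0|=q_n$, $t_0\neq q_n$, ``would correspond to an eigenvalue of $M$ of modulus $1$ distinct from the Perron value, contradicting aperiodic irreducibility.'' This conflates two different matrices: such a $t_0$ makes $1$ an eigenvalue of the \emph{complex} matrix $M(t_0)$ (entries $t_0^{\abs{y}}$), whereas Perron--Frobenius applies only to the \emph{non-negative} matrix $M(q_n)$; nothing in that theory, applied to $M(t)$ for fixed $t>0$, rules out $\det(\Id - M(t_0))=0$ for a complex $t_0$ on the circle $\abs{t}=q_n$. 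To close this you need the equality case of the entrywise domination $\abs{M(t_0)}\le M(q_n)$ (Wielandt's theorem), and then, to kill the resulting phase, you must exhibit cycles of the Charney graph whose total \emph{letter}-lengths have gcd $1$ --- i.e.\ aperiodicity must hold at the level of the lengths $\abs{x}$, not merely at the level of edges. This is exactly what the paper's proof manufactures: it replaces each vertex $\sigma$ by a chain of $\abs{\sigma}$ vertices (the index set $I_{\neg\Delta}$ of pairs $(i,\sigma)$ with $1\le i\le\abs{\sigma}$), obtaining a $0/1$ matrix $C$ whose primitivity follows from the loops $\sigma\to\sigma$ at generators (cycles of letter-length $1$), and whose powers literally count braids by Artin length. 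Some such device (letter-expansion, or Wielandt plus a cycle argument) is unavoidable in your setup.

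Second, the ``bookkeeping'' you defer --- relating $H_n(t)$ to $\det(\Id-M(t))$ up to a factor nonvanishing near $q_n$, with multiplicities --- is not a detail but the other half of the proof, and the paper deliberately avoids it. The paper never identifies $H_n$ with a characteristic determinant; it first proves the asymptotics $\lambda_n(k)=\mathbf{u}\cdot M^{k-1}\cdot\mathbf{v}\sim\rho^{k-1}(\mathbf{u}\cdot\mathbf{r})(\mathbf{l}\cdot\mathbf{v})$ by Perron--Frobenius on the expanded matrix, and only then deduces uniqueness and simplicity of $q_n$ from the exact identity $G_n=1/H_n$: writing $H_n$ as a product of linear factors over its roots, a second root of modulus $q_n$ or a multiple root at $q_n$ would contradict the clean equivalent for the coefficients. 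So the paper's logical order (asymptotics first, root structure second) bypasses both the determinant identity and Wielandt, while your order (root structure first, then partial fractions) forces you to prove both, and neither is proved in your proposal. A further minor flaw: $\lambda_n(k)\to\infty$ does not force $q_n<1$ (polynomial growth gives radius exactly $1$); you need exponential growth, which is what the paper extracts from the Perron eigenvalue $\rho>1$ of $C$, using that the Charney graph is strongly connected with loops and has at least $n-1\ge 2$ vertices.
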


\begin{proof}
  Recall that we have already defined~$q_n$\,, at the beginning of
  Subsection~\ref{sec:charney-graph}, as the radius of convergence of
  $G_n(t)$, and we know that $q_n$ is a root of smallest modulus
  of~$H_n(t)$. We will now derive the two statements of the corollary
  through an application of Perron-Frobenius theory for primitive
  matrices (see, \emph{e.g.},~\cite{seneta81}).

  Let $I_\Delta$ and $I_{\neg\Delta}$ denote the sets
\begin{align*}
I_\Delta&=\{(i,\Delta) \tq 1 \leq i \leq \abs{\Delta}\}\,,&
I_{\neg \Delta}&=\{(i,\sigma) \tq \sigma \in \SS_n \setminus \{\unit,\Delta\} \text{
    and } 1 \leq i \leq \abs{\sigma}\}\,,
\end{align*}
and let $I$ denote the disjoint union $I_\Delta \cup I_{\neg \Delta}$. Let $M
= (M_{x,y})_{x,y \in I}$ be the non-negative matrix defined as follows:
  \[
    M_{(i,\sigma),(j,\tau)}=
    \begin{cases}
1,&\text{if $j=i+1$ and $\sigma=\tau$,}\\
1,&\text{if $i=|\sigma|$ and $j=1$ and $\sigma\to\tau$,}\\
0,&\text{otherwise.}
    \end{cases}
  \]

  By construction, $M$ is a block triangular matrix
  $M=\left(\begin{smallmatrix}A&B\\0&C
    \end{smallmatrix}
    \right)$
where $A$, $B$ and $C$ are the restrictions of $M$ to the respective sets of
indices $ I_\Delta \times I_\Delta$, $I_\Delta \times I_{\neg \Delta}$ and
$I_{\neg \Delta} \times I_{\neg \Delta}$.

Since the Charney graph is strongly connected and contains loops according to
Proposition~\ref{prop:3}, and since it contains at least $n-1$ vertices (the
elements of~$\Sigma$), it follows that $C$ is a primitive matrix with Perron
eigenvalue $\rho > 1$.  By construction, we know that $A^{\abs{\Delta}} =
\mathbf{Id}_{\abs{\Delta}}$, hence that the eigenvalues of $A$ have
modulus~$1$.  Consequently, $\rho$~is a simple eigenvalue of~$M$, and has a
strictly greater modulus than all other eigenvalues of~$M$.  Hence, there
exist left and right eigenvectors $\mathbf{l}$ and $\mathbf{r}$ of $M$ for
the eigenvalue $\rho$ with non-negative entries, whose restrictions
$(\mathbf{l}_x)_{x \in I_{\neg \Delta}}$ and $(\mathbf{r}_x)_{x \in I_{\neg
\Delta}}$ only have positive entries, and such that $\mathbf{l} \cdot
\mathbf{r} = 1$.

Then, observe that $\lambda_n(k) = \mathbf{u} \cdot M^{k-1} \cdot \mathbf{v}$
for all $k \geq 1$, where $\mathbf{u}$ is the row vector defined by
$\mathbf{u}_{(i,\sigma)} = \un{i = 1}$ and $\mathbf{v}$ is the column vector
defined by $\mathbf{v}_{(i,\sigma)} = \un{i = \abs{\sigma}}$. Indeed, this
follows at once from the existence and uniqueness of the Garside normal form
for braids, and from the construction of the matrix~$M$.

Both vectors $\mathbf{u}$ and $\mathbf{v}$ have some non-zero entries
in~$I_{\neg \Delta}$\,, and therefore
\begin{gather}
\label{eq:8}
\lambda_n(k) = \mathbf{u} \cdot M^{k-1} \cdot \mathbf{v} \sim_{k\to\infty}
 \rho^{k-1} (\mathbf{u} \cdot \mathbf{r}) (\mathbf{l} \cdot
 \mathbf{v})
\end{gather}
Hence, $\rho^{-1}$~is the radius of convergence of the generating series
$G_n(t) = \sum_{k \geq 0} \lambda_n(k)$, and thus $\rho^{-1}=q_n$\,.

To complete the proof, consider the decomposition of $H_n(t)$ as a product of
the form:
\begin{gather*}
  H_n(t)=(1-t/q_n)\cdot(1-t/r_1)\cdot\ldots\cdot(1-t/r_i)\cdot(1-t/a_1)\cdot\ldots\cdot(1-t/a_j)\,,
\end{gather*}
where $r_1,\ldots,r_i$ are the other complex roots of $H_n(t)$ of
modulus~$q_n$\,, including~$q_n$ if its multiplicity is $>1$, and
$a_1,\ldots,a_j$ are the remaining complex roots of~$H_n(t)$, hence of
modulus greater than~$q_n$\,. Since we know that $G_n(t)=1/H_n(t)$, and
considering the series expansion of $1/H_n(t)$, one sees that the equivalent
found in~(\ref{eq:8}) for the coefficients $\lambda_n(\cdot)$ of $G_n(t)$
cannot hold if $i>0$, whence the result.
\end{proof}

\subsection{Two Möbius transforms}
\label{sec:two-mobi-transf}

This last subsection is devoted to the study of Möbius transforms.
In~\S~\ref{sec:mobius-transform}, we particularize to our braid monoids $\bbr
n$ the classical Möbius transform, as defined for general classes of partial
orders~\cite{rota64,stanley97}. We prove the Möbius inversion formula for our
particular case, although it could be derived from more general results.
Next, we introduce in~\S~\ref{sec:grade-mobi-transf} a variant, called graded
Möbius transform, which will prove to be most useful later for the
probabilistic analysis.

We will use extensively the notation $\un A$ for the characteristic function
of~$A$, equal to $1$ if $A$ is true and to $0$ is $A$ is false.

\subsubsection{The standard Möbius transform}
\label{sec:mobius-transform}

In the framework of braid monoids, the usual Möbius transform is defined as
follows and leads to the next proposition.

\begin{definition}
  \label{def:3}
  Given a real-valued function $f: \bbr n\mapsto\bbR$, its
  \emph{Möbius transform} is the function $h:\bbr n\mapsto\bbR$
  defined by:
\begin{gather}
  \label{eq:4}
h(x)=\sum_{X \subseteq \Sigma} (-1)^{|X|} f(x \cdot \Delta_X) \text{ for all } x \in \bbr n.
\end{gather}
\end{definition}

\begin{proposition}
  \label{prop:2}
Let $f,h:\bbr n\to\bbR$ be two functions such that the series $\sum_{x \in
\bbr n} |f(x)|$ and $\sum_{x \in \bbr n} |h(x)|$ are convergent. Then $h$ is
the Möbius transform of $f$ if and only if
\begin{gather}
\label{eq:10}
\forall x\in\bbr n\quad f(x)=\sum_{y\in\bbr n} h(x \cdot y)\,.
\end{gather}
\end{proposition}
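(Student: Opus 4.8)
The plan is to prove both implications at once by substituting each formula into the other and, for each fixed $w \in \bbr n$, collecting the signed contributions to the coefficient of $f(x\cdot w)$ (resp. $h(x\cdot w)$). The computational heart is the elementary signed-count identity
\begin{gather*}
  \sum_{X \subseteq \Sigma \,:\, \Delta_X \leqr w} (-1)^{|X|} = \un{w = \unit}\,,
\end{gather*}
together with its mirror statement in which $\leqr$ is replaced by $\leql$. To prove it I would set $D = \{\sigma \in \Sigma \tq \sigma \leqr w\}$ and first establish the characterisation $\Delta_X \leqr w \iff X \subseteq D$: if $X \subseteq D$ then $w$ is a common right-multiple of the elements of $X$, so the least one $\Delta_X = \bigveer X$ satisfies $\Delta_X \leqr w$; conversely $\Delta_X \leqr w$ gives $\sigma \leqr \Delta_X \leqr w$ for every $\sigma \in X$, whence $X \subseteq D$. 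The sum then factors through the binomial theorem as $\sum_{X \subseteq D}(-1)^{|X|} = (1-1)^{|D|} = 0^{|D|}$, which is $1$ if $D = \emptyset$ and $0$ otherwise; and $D = \emptyset$ holds exactly when no generator right-divides $w$, i.e. exactly when $w = \unit$ (a braid of positive length $a_1\cdots a_k$ admits $a_k$ as a right divisor).

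For the direct implication, assuming $h$ is the Möbius transform of $f$ I would expand
\begin{gather*}
  \sum_{y \in \bbr n} h(x\cdot y) = \sum_{y \in \bbr n}\sum_{X \subseteq \Sigma}(-1)^{|X|} f(x\cdot y\cdot\Delta_X)
\end{gather*}
and reindex by $w = y\cdot\Delta_X$. Since $\bbr n$ is a Garside, hence cancellative, monoid, for each fixed $X$ and each $w$ there is at most one $y$ with $y\cdot\Delta_X = w$, and one exists precisely when $\Delta_X \leqr w$; grouping terms by $x\cdot w$, the coefficient of $f(x\cdot w)$ is exactly $\sum_{X:\Delta_X\leqr w}(-1)^{|X|} = \un{w=\unit}$, so the whole sum reduces to $f(x\cdot\unit) = f(x)$. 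The converse is the mirror argument: starting from the inversion formula one expands $\sum_X (-1)^{|X|} f(x\cdot\Delta_X) = \sum_X (-1)^{|X|}\sum_y h(x\cdot\Delta_X\cdot y)$, reindexes by $w = \Delta_X\cdot y$, and applies the $\leql$-version of the identity (with $\Delta_X = \bigveel X$) to collapse the sum to $h(x)$.

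The only non-combinatorial point needing care is the interchange of the two infinite summations, which is exactly where the integrability hypotheses are used. Because the inner sum over $X$ is finite and each map $y \mapsto x\cdot y\cdot\Delta_X$ is injective (cancellativity), one has $\sum_{y,X}|f(x\cdot y\cdot\Delta_X)| \leq 2^{|\Sigma|}\sum_{z\in\bbr n}|f(z)| < \infty$, so Fubini for absolutely convergent series justifies the reindexing; convergence of $\sum_x|h(x)|$ plays the symmetric role for the converse. I expect the main obstacle to be the clean proof of the characterisation $\Delta_X \leqr w \iff X \subseteq D$, since the entire signed count rests on $\Delta_X$ being the join of $X$ in the divisibility lattice; once that lattice-theoretic fact is in hand, the binomial collapse and the cancellativity bookkeeping are routine.
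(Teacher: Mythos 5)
Your proof is correct and follows essentially the same route as the paper's: the same signed-count identity (the alternating sum of $(-1)^{|X|}$ over those $X\subseteq\Sigma$ with $\Delta_X\leqr w$, resp.\ $\Delta_X\leql w$, vanishes unless $w=\unit$, proved via the characterisation $\Delta_X\leqr w\iff X\subseteq\Dl(w^*)$ and the binomial collapse), combined with the same change of variables $w=y\cdot\Delta_X$, resp.\ $w=\Delta_X\cdot y$, in the two directions. Your explicit Fubini justification using cancellativity to bound $\sum_{y,X}|f(x\cdot y\cdot\Delta_X)|$ makes precise a point the paper leaves implicit, but it is the same underlying argument.
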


\begin{proof}
For every non-unit braid $z \in \bbr n$, the sets $\Dl(z)$ and $\Dl(z^*)$ are
non-empty, hence the powersets $\mathcal{P} \Dl(z)$ and $\mathcal{P}\Dl(z^*)$
are non-trivial Boolean lattices.  It follows from the equality $\Delta_X =
\bigveel X$ that:
\[\sum_{X \subseteq \Sigma} (-1)^{|X|} \un{\Delta_X \leql z} = \sum_{X \subseteq \Dl(z)} (-1)^{|X|} = 0\,.
\]
And, similarly, it follows from the equality $\Delta_X = \bigveer X$ that:
\[\sum_{X \subseteq \Sigma} (-1)^{|X|} \un{\Delta_X \leqr z} = \sum_{X \subseteq \Dl(z^*)} (-1)^{|X|} = 0\,
.\]

Consider now two functions $f$ and $h$ such that the series $\sum_{x \in \bbr
n} |f(x)|$ and $\sum_{x \in \bbr n} |h(x)|$ are convergent. Assume first that
$h$ is the Möbius transform of $f$. Using the change of variable $v = y \cdot
\Delta_X$, we derive from~\eqref{eq:4} the following:
\begin{align*}
f(x) &= \sum_{v \in \bbr n} \Bigl(\sum_{X \subseteq \Sigma} (-1)^{|X|} \un{\Delta_X \leqr v}\Bigr) f(x \cdot v) \\
&=\sum_{y \in \bbr n} \sum_{X \subseteq \Sigma} (-1)^{|X|} f(x \cdot y \cdot \Delta_X)
= \sum_{y\in\bbr n} h(x \cdot y),
\end{align*}
proving~\eqref{eq:10}.

Conversely, if~\eqref{eq:10} holds, we use the change of variable $u =
\Delta_X \cdot y$ to derive:
\begin{align*}
h(x) &= \sum_{u \in \bbr n} \Bigl(\sum_{X \subseteq \Sigma} (-1)^{|X|} \un{\Delta_X \leql u}\Bigr) h(x \cdot u) \\
&=\sum_{y \in \bbr n} \sum_{X \subseteq \Sigma} (-1)^{|X|} h(x \cdot \Delta_X \cdot y)
= \sum_{X \subseteq \Sigma} (-1)^{|X|} f(x \cdot\Delta_X).
\end{align*}
This shows that $h$ is the Möbius transform of $f$, completing the proof.
\end{proof}

In particular, observe that, if a function $f$ has support in~$\SS_n$\,, then
so does its Möbius transform~$h$.  Hence, we also define the notion of Möbius
transform of real-valued functions $f : \SS_n \to \bbR$ in a natural way.  In
that narrower context, Proposition~\ref{prop:2} formulates as follows.

\begin{corollary}
  \label{cor:5}
Let $f,h:\SS_n\to\bbR$ be two functions. Then the two statements:
\begin{align}
\label{eq:1}
\forall x\in\SS_n\quad  f(x)&=\sum_{y\in\SS_n} \un{x \cdot y \in
                              \SS_n} h(x \cdot y) \\
\label{eq:3}
\forall x\in\SS_n\quad  h(x)&=\sum_{X \subseteq \Sigma} (-1)^{|X|} \un{x \cdot \Delta_X \in
    \SS_n} f(x \cdot \Delta_X)
\end{align}
are equivalent.
\end{corollary}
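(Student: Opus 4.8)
The plan is to deduce Corollary~\ref{cor:5} from Proposition~\ref{prop:2} by extending functions supported on $\SS_n$ to the whole monoid by zero. Given $f,h:\SS_n\to\bbR$, let $\widetilde f,\widetilde h:\bbr n\to\bbR$ be the functions that agree with $f$ and $h$ on $\SS_n$ and vanish elsewhere. Since $\SS_n$ is finite, the series $\sum_x|\widetilde f(x)|$ and $\sum_x|\widetilde h(x)|$ converge, so Proposition~\ref{prop:2} applies to the pair $(\widetilde f,\widetilde h)$: the statement that $\widetilde h$ is the Möbius transform of $\widetilde f$, i.e. $\widetilde h(x)=\sum_{X\subseteq\Sigma}(-1)^{|X|}\widetilde f(x\cdot\Delta_X)$ for all $x\in\bbr n$, is equivalent to $\widetilde f(x)=\sum_{y\in\bbr n}\widetilde h(x\cdot y)$ for all $x\in\bbr n$. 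The whole argument then consists in checking that, under the zero extension, the first of these identities amounts to~\eqref{eq:3} and the second to~\eqref{eq:1}.

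First I would record two closure properties of $\SS_n$, both consequences of the fact that $\SS_n$ is the set of left divisors of $\Delta$, which coincides with its set of right divisors. On the one hand, if $x\notin\SS_n$ then $x\cdot z\notin\SS_n$ for every $z\in\bbr n$, since $x\leql x\cdot z$; on the other hand, if $x\cdot y\in\SS_n$ then $y\in\SS_n$, because $y\leqr x\cdot y\leqr\Delta$. These two facts control how the zero extension interacts with the sums. In particular, for $x\in\SS_n$ the only terms of $\sum_{y\in\bbr n}\widetilde h(x\cdot y)$ that can be nonzero are those with $x\cdot y\in\SS_n$, and for those $y\in\SS_n$ automatically; hence $\sum_{y\in\bbr n}\widetilde h(x\cdot y)=\sum_{y\in\SS_n}\un{x\cdot y\in\SS_n}\,h(x\cdot y)$.

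With these facts the matching of equations is routine. For $x\in\SS_n$ one has $\widetilde f(x\cdot\Delta_X)=\un{x\cdot\Delta_X\in\SS_n}\,f(x\cdot\Delta_X)$, so the Möbius identity for $(\widetilde f,\widetilde h)$ restricted to $\SS_n$ is precisely~\eqref{eq:3}; for $x\notin\SS_n$ each $\widetilde f(x\cdot\Delta_X)$ vanishes (as $x\leql x\cdot\Delta_X$), so the identity reads $0=0$ and carries no content --- in agreement with the remark preceding the corollary that the Möbius transform of a function supported in $\SS_n$ is again supported in $\SS_n$. Symmetrically, the inversion identity for $(\widetilde f,\widetilde h)$ restricted to $\SS_n$ is exactly~\eqref{eq:1} by the collapse of the sum noted above, and for $x\notin\SS_n$ it reads $0=\sum_{y}\widetilde h(x\cdot y)=0$ by the first closure property. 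Thus~\eqref{eq:1} for $(f,h)$ holds if and only if $\widetilde f(x)=\sum_{y}\widetilde h(x\cdot y)$ for all $x$, and~\eqref{eq:3} for $(f,h)$ holds if and only if $\widetilde h$ is the Möbius transform of $\widetilde f$; Proposition~\ref{prop:2} then gives the desired equivalence.

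The only genuine subtlety, and the step I would be most careful about, is the closure lemma $x\cdot y\in\SS_n\Rightarrow y\in\SS_n$: it is exactly what lets the unrestricted sum $\sum_{y\in\bbr n}\widetilde h(x\cdot y)$ collapse to the restricted sum $\sum_{y\in\SS_n}\un{x\cdot y\in\SS_n}\,h(x\cdot y)$ without producing spurious terms, and it is here that the coincidence of the left and right divisors of $\Delta$ is essential.
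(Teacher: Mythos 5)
Your proposal is correct and follows exactly the route the paper intends: Corollary~\ref{cor:5} is obtained by specializing Proposition~\ref{prop:2} to the zero-extensions of $f$ and $h$ (summability being trivial by finiteness of $\SS_n$), and your two closure properties of $\SS_n$ --- in particular that $x\cdot y\in\SS_n$ forces $y\in\SS_n$ via the coincidence of left and right divisors of $\Delta$ --- are precisely the facts that make the unrestricted sums collapse to \eqref{eq:1} and \eqref{eq:3} and that justify the paper's remark that the Möbius transform preserves support in $\SS_n$. The paper leaves these verifications implicit; you have simply written them out.
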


In particular, by comparing the expressions~(\ref{eq:2}) of $H_n$
and~(\ref{eq:3}) of the Möbius transform of $f:\SS_n\to\bbR$, we observe that
if $p$ is a real number, and if $f:\SS_n \to \bbR$ is defined by
$f(x)=p^{\abs{x}}$, then its Möbius transform $h$ satisfies:
\begin{gather}
  \label{eq:19}
h(\unit)=H_n(p).
\end{gather}

\paragraph*{Running examples for $n=3$.}

We tabulate in Table~\ref{tab:mobiustransform3} the values of the Möbius
transform of the function $p^{|x|}$ defined on~$\SS_3$\,, for $\br 3$ and
for~$\dbr 3$\,. It is easily computed based on the elements found in
Figures~\ref{fig:acceptorB3} and~\ref{fig:acceptordual} respectively.

\begin{table}
\begin{align*}
    \begin{array}{ll}
   x\in\SS_3  &h(x)\\
\hline
\unit&1-2p+p^3=H_3(p)\\
\sigma_1&p-p^2\\
\sigma_2&p-p^2\\
\sigma_1\cdot\sigma_2&p^2-p^3\\
\sigma_2\cdot\sigma_1&p^2-p^3\\
\Delta_3&p^3
    \end{array}
&&
   \begin{array}{ll}
   x\in\SS_3  &h(x)\\
\hline
\unit&1-3p+2p^2=H_3(p)\\
\sigma_{1,2}&p-p^2\\
\sigma_{2,3}&p-p^2\\
\sigma_{1,3}&p-p^2\\
\delta_3&p^2\\
\phantom{\Delta_3}
    \end{array}
\end{align*}
\caption{Values of the Möbius transform $h:\SS_3\to\bbR$ of the
    function $f:\SS_3\to\bbR$ defined by $f(x)=p^{|x|}$
    for~$\br 3$ (left hand side) and for~$\dbr 3$ (right hand side)}
  \label{tab:mobiustransform3}
\end{table}

\subsubsection{The graded Möbius transform}
\label{sec:grade-mobi-transf}

The above relation between real-valued functions $f : \bbr n \mapsto \bbR$
and their Möbius transforms works only when the Möbius transform is summable.
In order to deal with all functions defined on $\bbr n$, we introduce a
variant of those transforms, which is the notion of \emph{graded Möbius
transform}. To this end, for each finite braid $x\in\bbr n$, we define $\bbr
n[x]$ as the following subset:
\[
\bbr n[x] =  \{y\in\bbr n\tq
\height(x \cdot y)=\height(x)\}
\,,
\]

\begin{definition}
\label{def:4} Given a real-valued function $f: \bbr n\mapsto\bbR$, its
\emph{graded Möbius transform} is the function $h:\bbr n\mapsto\bbR$ defined
by:
\begin{gather}
  \label{eq:4**}
\forall x\in\bbr n\quad  h(x)=\sum_{X \subseteq \Sigma} (-1)^{|X|} \un{\Delta_X \in \bbr
    n[x]} f(x \cdot \Delta_X)\,.
\end{gather}
\end{definition}
For functions that vanish outside of $\SS_n$, the notions of Möbius transform
and graded Möbius transform coincide, while this is not the case in general.

The generalization of the summation formula~\eqref{eq:10} stands in the
following result.

\begin{theorem}
  \label{thr:3}
Let $f,h:\bbr n\to\bbR$ be two functions. Then $h$ is the graded Möbius
transform of $f$ if and only if
\begin{gather}
\label{eq:11}
\forall x\in\bbr n\quad f(x)=\sum_{y\in\bbr n[x]}h(x \cdot y)\,.
\end{gather}
\end{theorem}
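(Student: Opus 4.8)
The plan is to mirror the proof of Proposition~\ref{prop:2}, replacing the summability argument used there by the observation that here every sum involved is \emph{finite}. The first step is to rewrite $\bbr n[x]$ in divisibility terms: since $x\leql x\cdot y$ always holds, part~\ref{pro:garside-1} of Proposition~\ref{proposition:dehornoy2013foundations} gives $\height(x\cdot y)\geq\height(x)$, so that, writing $k=\height(x)$,
\begin{equation*}
\bbr n[x]=\{\,y\in\bbr n\tq x\cdot y\leql\Delta^{k}\,\}\,.
\end{equation*}
Two consequences will be used repeatedly: the set $\bbr n[x]$ is finite (via $y\mapsto x\cdot y$ and cancellativity it is in bijection with the divisors of $\Delta^{k}$ lying above $x$), and it is closed under left division, meaning that $z\in\bbr n[x]$ and $w\leql z$ force $w\in\bbr n[x]$, since then $x\cdot w\leql x\cdot z\leql\Delta^{k}$.

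For the forward implication I would assume that $h$ is the graded Möbius transform of $f$ and substitute~\eqref{eq:4**} into $\sum_{y\in\bbr n[x]}h(x\cdot y)$. A pair $(y,X)$ with $y\in\bbr n[x]$ and $\Delta_X\in\bbr n[x\cdot y]$ satisfies $x\cdot y\cdot\Delta_X\leql\Delta^{k}$ (using $\height(x\cdot y)=k$); setting $z=y\cdot\Delta_X$ and invoking left-division closure, such pairs are in bijection with pairs $(z,X)$ where $z\in\bbr n[x]$ and $\Delta_X\leqr z$. Interchanging the two summations, the coefficient of $f(x\cdot z)$ becomes $\sum_{X\subseteq\Sigma}(-1)^{|X|}\un{\Delta_X\leqr z}$, which by the identity $\Delta_X=\bigveer X$ established in the proof of Proposition~\ref{prop:2} equals $\un{z=\unit}$. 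Only the term $z=\unit$ survives, and one recovers $f(x)$, that is,~\eqref{eq:11}.

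The converse is entirely symmetric: assuming~\eqref{eq:11}, I would substitute it into the right-hand side of~\eqref{eq:4**}. A pair $(X,y)$ with $\Delta_X\in\bbr n[x]$ and $y\in\bbr n[x\cdot\Delta_X]$ satisfies $x\cdot\Delta_X\cdot y\leql\Delta^{k}$; writing $u=\Delta_X\cdot y$ and again invoking left-division closure (now through $\Delta_X\leql u$), these pairs correspond to pairs $(u,X)$ with $u\in\bbr n[x]$ and $\Delta_X\leql u$. For fixed $u$ the coefficient of $h(x\cdot u)$ is $\sum_{X\subseteq\Sigma}(-1)^{|X|}\un{\Delta_X\leql u}$, which equals $\un{u=\unit}$ by the dual identity $\Delta_X=\bigveel X$; only $u=\unit$ contributes, and one recovers $h(x)$.

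The only genuinely delicate point — the step I expect to be the main obstacle — is checking that the two nested membership conditions (for instance $y\in\bbr n[x]$ together with $\Delta_X\in\bbr n[x\cdot y]$) collapse, after the change of variable, to the single constraint $z\in\bbr n[x]$ (resp.\ $u\in\bbr n[x]$) together with an \emph{unrestricted} divisibility relation $\Delta_X\leqr z$ (resp.\ $\Delta_X\leql u$) ranging over all of $\mathcal{P}(\Sigma)$. This is exactly what makes the Möbius identities of Proposition~\ref{prop:2} applicable verbatim, and it is precisely the closure of $\bbr n[x]$ under left division that guarantees no hidden restriction on $X$ is introduced.
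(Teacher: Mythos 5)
Your proof is correct and follows essentially the same route as the paper's: substitute the definition (resp.\ the summation formula), change variables $z=y\cdot\Delta_X$ (resp.\ $u=\Delta_X\cdot y$), collapse the two nested height conditions into a single membership in $\bbr n[x]$ plus a divisibility constraint, and conclude with the alternating-sum identity $\sum_{X\subseteq\Dl(w)}(-1)^{|X|}=\un{w=\unit}$. The only cosmetic difference is in how the collapsing step is justified: the paper uses the equivalence $y\in\bbr n[x]\wedge z\in\bbr n[x\cdot y]\iff y\cdot z\in\bbr n[x]$, proved by monotonicity of the height, whereas you derive it from the characterization $\bbr n[x]=\{y\in\bbr n\tq x\cdot y\leql\Delta^{\height(x)}\}$ together with its closure under left division --- two interchangeable one-line observations.
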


Note that, in formula~(\ref{eq:11}), the braids $y\in\bbr n[x]$ may have
normal forms that differ completely from that of~$x$. This relates with
Remark~\ref{rem:1}.

\begin{proof}
For a generic braid $x\in\bbr n$ of height $k=\height(x)$, we denote by
$(x_1,\ldots,x_k)$ the Garside decomposition of $x$. Observe that, for all
$x, y, z\in \bbr n$, we have:
\begin{gather}
 \label{eq:equivalence}
y \in \bbr n[x] \wedge z \in \bbr n[x \cdot y] \iff
y \cdot z \in \bbr n[x].
 \end{gather}
 Indeed, $y \in \bbr n[x] \wedge z \in \bbr n[x \cdot y] \iff \tau(x) = \tau(x \cdot y) = \tau(x \cdot y \cdot z) \iff
\tau(x) = \tau(x \cdot y \cdot z) \iff y \cdot z \in \bbr n[x]$.

Hence, if $h$ is the graded Möbius transform of $f$, then:
\begin{align*}
\sum_{y\in\bbr n[x]} h(x \cdot y) &=\sum_{y \in \bbr n} \sum_{X \subseteq \Sigma} (-1)^{|X|} \un{y \in \bbr n[x]} \un{\Delta_X \in \bbr n[x \cdot y]} f(x \cdot y \cdot \Delta_X)
&&\text{by~\eqref{eq:4**}}\\
&= \sum_{v \in \bbr n} \sum_{y \in \bbr n} \sum_{X \subseteq \Sigma} (-1)^{|X|} \un{v \in \bbr n[x]} \un{v = y \Delta_X} f(x \cdot v)
&&\hspace{-0.9cm}\text{by~\eqref{eq:equivalence} with $z=\Delta_X$}\\
&= \sum_{v \in \bbr n} \sum_{X \subseteq \Sigma} (-1)^{|X|} \un{v \in \bbr n[x]} \un{\Delta_X \leqr v} f(x \cdot v) \\
&= \sum_{v \in \bbr n} \Bigl(\sum_{X \subseteq \Dl(v^*)} (-1)^{|X|}\Bigr) \un{v \in \bbr n[x]} f(x \cdot v) \\
&= f(x).
\end{align*}

Conversely, if~\eqref{eq:11} holds, then:
\begin{align*}
\sum_{X \subseteq \Sigma} (-1)^{|X|} &\un{\Delta_X \in \bbr n[x]} f(x\cdot \Delta_X) \\
&= \sum_{X \subseteq \Sigma} \sum_{z \in \bbr n}(-1)^{|X|} \un{\Delta_X \in \bbr n[x]} \un{z \in \bbr n[x \cdot \Delta_X]} h(x \cdot \Delta_X \cdot z) \\
&= \sum_{u \in \bbr n} \sum_{X \subseteq \Sigma} \sum_{z \in \bbr n}(-1)^{|X|} \un{u \in \bbr n[x]} \un{u = \Delta_X \cdot z} h(x \cdot u) \\
&= \sum_{u \in \bbr n[x]} \sum_{X \subseteq \Sigma} (-1)^{|X|} \un{\Delta_X \leql u} h(x \cdot u) \\
&= \sum_{u \in \bbr n[x]} \Bigl(\sum_{X \subseteq \Dl(u)} (-1)^{|X|} \Bigr) h(x \cdot u) \\
&= h(x).
\end{align*}
This completes the proof.
\end{proof}


\subsubsection{Additional properties of Möbius transforms}
\label{sec:additional-results}

Finally, we state in this subsection a couple of lemmas which we will use in
next section for the probabilistic study.

\begin{lemma}
  \label{lem:2}
  For $p$ a real number, let $f:\SS_n\to\bbR$ be defined by
  $f(x)=p^{\abs{x}}$, and let $h:\SS_n\to\bbR$ be the Möbius transform
  of~$f$. Let also $g:\SS_n\to\bbR$ be defined by:
\begin{gather}
\label{eq:17}
g(x)=\sum_{y\in\SS_n} \un{x \to y}h(y)\,.
\end{gather}
Then the identity $h(x)=f(x)g(x)$ holds for all $x\in\SS_n$\,.
\end{lemma}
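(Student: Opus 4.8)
The plan is to prove the stronger identity $g(x)=h(x)/f(x)$, the right-hand side of which simplifies because the length is additive. First I would expand $h$ through its defining formula, the Möbius transform~\eqref{eq:3} of Corollary~\ref{cor:5}. Since the presentations are homogeneous, $\abs{x\cdot\Delta_X}=\abs x+\abs{\Delta_X}$, so $f(x\cdot\Delta_X)=f(x)\,p^{\abs{\Delta_X}}$ and $f(x)=p^{\abs x}$ factors out:
\[
  h(x)=f(x)\sum_{X\subseteq\Sigma}(-1)^{\abs X}\,\un{x\cdot\Delta_X\in\SS_n}\,p^{\abs{\Delta_X}}.
\]
It then suffices to identify the bracketed sum with $g(x)$. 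To handle the indicator I would first record the elementary characterization $x\cdot\Delta_X\in\SS_n\iff X\cap\Dr(x)=\emptyset$: writing $\bar x$ for the right complement defined by $x\cdot\bar x=\Delta$, cancellativity gives $x\cdot z\in\SS_n\iff z\leql\bar x$ for every $z\in\SS_n$; applying this to $z=\Delta_X=\bigveel X$ and using the universal property of the least upper bound turns the membership into ``$\sigma\leql\bar x$ for all $\sigma\in X$'', i.e.\ $X\subseteq\Sigma\setminus\Dr(x)$.

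Second, I would extract from~\eqref{eq:1} an ``up-set'' form of Möbius inversion: for every $w\in\SS_n$,
\[
  f(w)=\sum_{z\in\SS_n,\ w\leql z}h(z).
\]
This follows from the substitution $z=w\cdot y$, which by cancellativity is a bijection from $\{y\in\SS_n\tq w\cdot y\in\SS_n\}$ onto the principal up-set $\{z\in\SS_n\tq w\leql z\}$ (a right divisor of a simple braid is again simple, since $\Delta$ has the same left and right divisors).

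Finally I would compute $g(x)$ directly. By definition $x\to y$ means $\Dl(y)\subseteq\Dr(x)$, so the sum defining $g(x)$ runs over simple braids $y$ having no left divisor in $U:=\Sigma\setminus\Dr(x)$. Expanding
\[
  \prod_{\sigma\in U}\bigl(1-\un{\sigma\leql y}\bigr)=\sum_{Y\subseteq U}(-1)^{\abs Y}\,\un{\Delta_Y\leql y},
\]
multiplying by $h(y)$, summing over $y\in\SS_n$, and invoking the up-set formula with $w=\Delta_Y$ yields
\[
  g(x)=\sum_{Y\subseteq U}(-1)^{\abs Y}f(\Delta_Y)=\sum_{Y\subseteq\Sigma\setminus\Dr(x)}(-1)^{\abs Y}\,p^{\abs{\Delta_Y}},
\]
which, by the simplicity criterion above, is exactly the bracketed sum, so $h(x)=f(x)g(x)$. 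I expect this last inclusion--exclusion to carry the real content: the key is that summing $h$ over the order ideal $\{y\tq x\to y\}$ can be resolved into alternating sums over principal up-sets, each collapsing to $f(\Delta_Y)=p^{\abs{\Delta_Y}}$ via the inversion formula. The characterization of when $x\cdot\Delta_X$ is simple is the other needed ingredient; it is routine Garside theory but must be stated explicitly to make the two expressions line up.
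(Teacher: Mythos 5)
Your proof is correct and follows essentially the same route as the paper's: both factor $f(x)$ out of the Möbius transform via additivity of length, characterize $x\cdot\Delta_X\in\SS_n$ by $X\subseteq\Sigma\setminus\Dr(x)$, and then match the resulting alternating sum with $g(x)$ by combining the up-set form of the summation formula $f(\Delta_Y)=\sum_{y\,:\,\Delta_Y\leql y}h(y)$ with inclusion--exclusion over subsets of $\Sigma\setminus\Dr(x)$. The only differences are presentational: the paper packages the computation as an identity $F=G$ between two functions on $\mathcal P(\Sigma)$ and then specializes at $A=\Dr(x)$, and it proves the simplicity criterion via $x\cdot\Delta_{\Sigma\setminus\Dr(x)}\leql\Delta$ rather than via the right complement $\bar x$, but these are the same ingredients in the same roles.
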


\begin{proof}
  Let $P=\mathcal{P}(\Sigma)$, and consider the two functions
  $F,G:P\to\bbR$ defined, for $A\in P$, by:
\begin{align*}
F(A) &= \sum_{I \in P} (-1)^{\abs{I}} \un{I \subseteq \Dl(\Delta_{\Sigma \setminus A})} f(\Delta_I)\,,&
G(A) &= \sum_{y \in \SS_n} \un{\Dl(y) \cap \Dl(\Delta_{\Sigma\setminus A}) = \emptyset} h(y)\,.
\end{align*}
Then we claim that $F=G$.

Let us prove the claim. For every $I\in P$ and for every $y\in\SS_n$, we
have:
\[I\subseteq\Dl(y)\iff\Delta_I\leql y \iff \Dl(\Delta_I) \subseteq \Dl(y).\]
Therefore, according to the Möbius summation formula~\eqref{eq:10}, we have:
\[f(\Delta_I)=\sum_{y\in\SS_n}\un{I\subseteq\Dl(y)}h(y).\]
Reporting the right hand member above in the sum defining $F(A)$, and
inverting the order of summations, yields:
\[F(A) =\sum_{y\in\SS_n}\Bigl(\sum_{I\in
    P}(-1)^{\abs{I}}\un{I\subseteq \Dl(\Delta_{\Sigma\setminus A})
    \cap \Dl(y)}\Bigr)h(y)
  =\sum_{y\in\SS_n}\un{\Dl(\Delta_{\Sigma\setminus A}) \cap \Dl(y) =
    \emptyset}h(y) = G(A),\] which proves the claim.

Now observe that, for every $x\in\SS_n$, we have
\[x \cdot \Delta_{\Sigma \setminus \Dr(x)} = \bigveel \{x \cdot \sigma \tq \sigma \in \Sigma \setminus \Dr(x)\} \leq \bigveel \SS_n = \Delta_\Sigma,\]
and therefore $\Dl(\Delta_{\Sigma \setminus \Dr(x)}) = \Sigma \setminus
\Dr(x)$. This proves that
\begin{align}
  \label{eq:24}
  x \cdot \Delta_I \in \SS_n\  &
                                 \iff\ \makebox[6em]{$I \cap \Dr(x) = \emptyset$}
                                 \iff\   I \subseteq \Dl(\Delta_{\Sigma    \setminus \Dr(x)})\\
  \label{eq:26}
  x \to y\  & \iff\  \makebox[6em]{$\Dl(y) \subseteq \Dr(x)$}
              \iff\   \Dl(y) \cap \Dl(\Delta_{\Sigma \setminus \Dr(x)}) = \emptyset.
\end{align}
Hence, using the multiplicativity of $f$, we have simultaneously
\begin{align*}
h(x)&=\sum_{I\in P}(-1)^{\abs{I}} \un{I \subseteq \Dl(\Delta_{\Sigma \setminus \Dr(x)})} f(x\cdot\Delta_I) =  f(x)F\left(\Dr(x)\right) \\
\text{and }g(x)&=\sum_{y\in\SS_n}\un{\Dl(y) \cap \Dl(\Delta_{\Sigma\setminus\Dr(x)} = \emptyset}h(y)=G(\Dr(x))
\end{align*}
Since $F=G$, it implies $h(x)=f(x)g(x)$, which completes the proof of the
lemma.
\end{proof}

\begin{lemma}
\label{lem:3b} Let $(x_1,\ldots,x_k)$ be the Garside decomposition of a braid
$x \in \bbr n$ and let $X$ be a subset of\/~$\Sigma$. We have:
\begin{gather}
\label{eq:3b}
\Delta_X \in \bbr n[x] \iff \Delta_X \in \bbr n[x_k].
\end{gather}
\end{lemma}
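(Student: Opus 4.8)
The plan is to unfold the definition of $\bbr n[\cdot]$ in terms of heights, rewrite membership in terms of the right complement of $x$ in a suitable power of $\Delta$, and finally reduce everything to a statement about the last Garside factor $x_k$ alone. Write $k=\height(x)$. Since $x\leql x\cdot\Delta_X$, the characterization of the height in Proposition~\ref{proposition:dehornoy2013foundations} (first item) gives $\height(u)\leq\height(v)$ whenever $u\leql v$, so $\height(x\cdot\Delta_X)\geq k$; hence $\Delta_X\in\bbr n[x]$ is equivalent to $\height(x\cdot\Delta_X)\leq k$, that is, to $x\cdot\Delta_X\leql\Delta^k$. As $x\leql\Delta^k$, cancellativity provides a unique $\overline x$ with $x\cdot\overline x=\Delta^k$, and left cancellation turns $x\cdot\Delta_X\leql\Delta^k=x\cdot\overline x$ into $\Delta_X\leql\overline x$. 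Finally, since $\Delta_X=\bigveel X$, we have $\Delta_X\leql\overline x\iff X\subseteq\Dl(\overline x)$. Running the same computation for the simple braid $x_k$ (of height $1$), with $\overline{x_k}$ defined by $x_k\cdot\overline{x_k}=\Delta$, yields $\Delta_X\in\bbr n[x_k]\iff X\subseteq\Dl(\overline{x_k})$. Thus it suffices to prove the equality of descent sets $\Dl(\overline x)=\Dl(\overline{x_k})$.

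One inclusion is immediate and uses no normality: from $x\cdot\overline{x_k}=x_1\cdots x_{k-1}\cdot(x_k\cdot\overline{x_k})=x_1\cdots x_{k-1}\cdot\Delta\leql\Delta^{k-1}\cdot\Delta=\Delta^k=x\cdot\overline x$ we get $\overline{x_k}\leql\overline x$ after cancelling $x$, whence $\Dl(\overline{x_k})\subseteq\Dl(\overline x)$. The reverse inclusion $\Dl(\overline x)\subseteq\Dl(\overline{x_k})$ is the heart of the matter, and it is exactly where the normality of $(x_1,\ldots,x_k)$ enters; it says that $\overline{x_k}$ is the first factor of the normal form of $\overline x$, i.e. $\overline x\wedgel\Delta=\overline{x_k}$, which via $\Dl(\overline x)=\Dl(\overline x\wedgel\Delta)$ (a generator divides a braid iff it divides its meet with $\Delta$) gives the desired equality. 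I would prove it by induction on $k$ using a reversed–complement factorization: writing $w=x_1\cdots x_{k-1}$ and letting $\phi$ be the automorphism of $\bbr n$ determined by $\Delta\cdot u=\phi(u)\cdot\Delta$ (which permutes $\Sigma$), direct cancellation gives $\overline x=\overline{x_k}\cdot\phi^{-1}(\overline w)$. That this is the normal factorization amounts, through the characterization of normal sequences by the relation $\to$ recalled above, to $\Dl(\phi^{-1}(\overline w))\subseteq\Dr(\overline{x_k})$. Using the identities $\Dl(\overline y)=\Sigma\setminus\Dr(y)$ for simple $y$, $\overline{\overline{x_k}}=\phi^{-1}(x_k)$, the $\phi$-equivariance of $\Dl$, and the induction hypothesis $\Dl(\overline w)=\Sigma\setminus\Dr(x_{k-1})$, this inclusion unwinds to $\Dl(x_k)\subseteq\Dr(x_{k-1})$, which is precisely the normality relation $x_{k-1}\to x_k$.

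The main obstacle is this last inclusion: it rests on the reversing duality that the complements of a left-weighted sequence again form a left-weighted sequence, together with the basic behaviour of the Garside automorphism $\phi$ and of complementation; these are standard but are not among the facts recalled in the excerpt, so they would have to be set up. Everything else is routine bookkeeping with cancellativity, Proposition~\ref{proposition:dehornoy2013foundations}, and the lattice identity $\Delta_X=\bigveel X$. If one prefers to avoid reproving the reversing duality, an alternative is to invoke directly the classical fact that $\{\sigma\in\Sigma\tq\height(x\cdot\sigma)=\height(x)\}$ depends only on the last normal factor and equals $\Sigma\setminus\Dr(x_k)$; the reduction of the first paragraph then closes the argument at once, since $\Delta_X\leql\overline x$ is equivalent to $X\subseteq\Dl(\overline x)=\{\sigma\in\Sigma\tq\height(x\cdot\sigma)=\height(x)\}$.
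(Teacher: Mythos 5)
Your proof is correct, but it takes a genuinely different and heavier route than the paper's. You reformulate both memberships through right complements in powers of $\Delta$: writing $x\cdot\overline{x}=\Delta^{\height(x)}$ and $x_k\cdot\overline{x_k}=\Delta$, the claim becomes $\Dl(\overline{x})=\Dl(\overline{x_k})$, one inclusion coming from $\overline{x_k}\leql\overline{x}$ by cancellation, the other from an induction on $k$ that runs through the Garside automorphism $\phi$, the identities $\Dl(\overline{y})=\Sigma\setminus\Dr(y)$ and $\overline{\overline{x_k}}=\phi^{-1}(x_k)$, and finally the single normality relation $x_{k-1}\to x_k$. All of these steps check out (including the quasi-centrality of $\Delta$, which you also need, tacitly, to justify $x_1\cdot\ldots\cdot x_{k-1}\cdot\Delta\leql\Delta^k$ in your ``easy'' inclusion, so that step is not quite as free as you present it), but $\phi$, equivariance of $\Dl$, and the complement identities are standard Garside facts the paper never sets up, so a self-contained write-up would be noticeably longer. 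The paper avoids complements entirely and argues directly on heights: if $x_k\cdot\Delta_X$ is simple, then $x_1\cdot\ldots\cdot x_{k-1}\cdot(x_k\cdot\Delta_X)$ exhibits $x\cdot\Delta_X$ as a product of $k$ simple braids, and monotonicity of $\height$ along $\leql$ gives $\height(x\cdot\Delta_X)=\height(x)$; conversely, if $x_k\cdot\Delta_X$ is not simple, closure of $\SS_n$ under $\veel$ produces a generator $\sigma\in X$ with $\sigma\in\Dr(x_k)$, so $(x_1,\ldots,x_k,\sigma)$ is normal and $\height(x\cdot\Delta_X)\geq\height(x\cdot\sigma)=\height(x)+1$. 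Thus normality enters there as $x_k\to\sigma$ (extending the sequence by a witness letter), not as $x_{k-1}\to x_k$ inside an induction. What your approach buys is a stronger, explicit dual statement --- $\{\sigma\in\Sigma\tq\height(x\cdot\sigma)=\height(x)\}=\Dl(\overline{x})=\Sigma\setminus\Dr(x_k)$, i.e.\ $\overline{x_k}$ is the head of the normal form of $\overline{x}$ --- which is of independent interest; what the paper's buys is brevity and self-containedness given exactly the facts it has recalled. Note finally that your proposed shortcut, citing the ``classical fact'' about height-preserving generators, is essentially the paper's own argument specialized to singletons $X=\{\sigma\}$, so that variant collapses onto the paper's proof combined with your complement reduction.
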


\begin{proof}
  The result is immediate if $x = \unit$.  Moreover, if $x \neq \unit$
  and if $\Delta_X \in \bbr n[x_k]$, we observe that $x_k \Delta_X$ is
  a simple braid, and therefore that
  $x_1 \cdot \ldots \cdot x_{k-1} \cdot (x_k \Delta_X)$ is a
  factorization of $x$ into $k$ simple braids, whence
  $\height(x \Delta_X) \leq \height(x)$.  Since the Garside length is
  non-decreasing for $\leql$, it follows that
  $\Delta_X \in \bbr n[x]$.

  Conversely, if $x \neq \unit$ and if $\Delta_X \notin \bbr n[x_k]$,
  since the set $\SS_n$ is closed under~$\veel$, there must exist
  some generator $\sigma \in X \setminus \bbr n[x_k]$.  Hence, we have
  $x_1 \to \ldots x_k \to \sigma$, and therefore
  $\height(x \cdot \Delta_X) \geq \height(x \cdot \sigma) = k+1$,
  i.e.\ $\Delta_X \notin \bbr n[x]$.
\end{proof}

\begin{corollary}
 \label{cor:cor}
 Let $f:\bbr n\to\bbR$ be the function defined by $f(x)=p^{|x|}$. Then
 the graded Möbius transform $h:\bbr n\to\bbR$ of $f$ satisfies the
 following property:
 \begin{gather}
  \label{eq:mobtransf}
  h(x)=p^{|x_1|+\ldots+|x_{k-1}|} h(x_k)\,,
 \end{gather}
where $(x_1,\ldots,x_k)$ is the Garside decomposition of~$x$.
\end{corollary}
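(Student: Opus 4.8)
The plan is to unwind the definition~\eqref{eq:4**} of the graded Möbius transform directly and then localize the summation to the last factor $x_k$ of the Garside decomposition by invoking Lemma~\ref{lem:3b}. First I would record that, since the presentations of $\bbr n$ are homogeneous, the length is additive along products; hence $f(x\cdot\Delta_X)=p^{\abs{x}+\abs{\Delta_X}}=p^{\abs{x}}\,p^{\abs{\Delta_X}}$, so that $f$ is multiplicative. Combining this with the decomposition $x=x_1\cdot\ldots\cdot x_k$, which gives $\abs{x}=\abs{x_1}+\ldots+\abs{x_k}$, I obtain the factorization $p^{\abs{x}}=p^{\abs{x_1}+\ldots+\abs{x_{k-1}}}\,p^{\abs{x_k}}$ that will eventually be pulled out of the sum.

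The key step is the replacement of the indicator. Starting from
\[
h(x)=\sum_{X\subseteq\Sigma}(-1)^{\abs{X}}\un{\Delta_X\in\bbr n[x]}\,p^{\abs{x}+\abs{\Delta_X}},
\]
I would apply Lemma~\ref{lem:3b}, which asserts the equivalence $\Delta_X\in\bbr n[x]\iff\Delta_X\in\bbr n[x_k]$, to rewrite the constraint $\un{\Delta_X\in\bbr n[x]}$ as $\un{\Delta_X\in\bbr n[x_k]}$. This is precisely the point where the whole statement hinges: without Lemma~\ref{lem:3b} there would be no way to express the condition appearing in the graded Möbius transform of $x$ purely in terms of the single simple braid $x_k$.

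Once the indicator has been localized, I would factor the constant $p^{\abs{x_1}+\ldots+\abs{x_{k-1}}}$ out of the sum, leaving
\[
h(x)=p^{\abs{x_1}+\ldots+\abs{x_{k-1}}}\sum_{X\subseteq\Sigma}(-1)^{\abs{X}}\un{\Delta_X\in\bbr n[x_k]}\,p^{\abs{x_k}+\abs{\Delta_X}}.
\]
Finally I would recognize the remaining sum as $h(x_k)$ itself, since $p^{\abs{x_k}+\abs{\Delta_X}}=f(x_k\cdot\Delta_X)$ and the sum is exactly the defining expression~\eqref{eq:4**} of the graded Möbius transform of $f$ evaluated at $x_k$. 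This yields $h(x)=p^{\abs{x_1}+\ldots+\abs{x_{k-1}}}\,h(x_k)$, as claimed. There is no genuine obstacle here beyond carefully matching the truncated sum against the definition; the entire content of the corollary is the combination of the multiplicativity of $f$ with the localization supplied by Lemma~\ref{lem:3b}.
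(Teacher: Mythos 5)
Your proof is correct and is exactly the argument the paper has in mind: the paper's own proof simply states that the result ``follows directly from the definition of the graded Möbius transform, together with Lemma~\ref{lem:3b},'' and your write-up is the faithful expansion of that one line (multiplicativity of $f$ from homogeneity, localization of the indicator via Lemma~\ref{lem:3b}, and recognition of the remaining sum as $h(x_k)$).
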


\begin{proof}
  It follows directly from the definition~\eqref{eq:4} of the graded
  Möbius transform, together with Lemma~\ref{lem:3b}.
\end{proof}

\section{Uniform measures on braid monoids}
\label{sec:unif-meas-braid}

We are now equipped with adequate tools to study uniform measures on braids.
Consider the following (vague) questions: how can we pick a braid uniformly
at random? How can we pick a large braid uniformly at random? What are the
characteristics of such random braids?

Since there are countably many braids, these questions cannot be given
immediately a consistent meaning. However, for each fixed integer
$k\geq0$, there are finitely many braids of size~$k$, and it is thus
meaningful to pick uniformly a braid of size $k$ at random. Please
notice the difference between picking a braid of size $k$ uniformly at
random, and picking a word uniformly in~$\Sigma^k$ and then
considering the braid it induces. The later corresponds to the uniform
random walk on~$\bbr n$\,, but not the former.

A possible way of picking a braid at random is thus the following:
first pick the size $k$ at random, and then pick a braid uniformly
among those of size~$k$. The problem remains of how to draw $k$ in a
``natural'' way. It is the topic of this section to demonstrate that
there is indeed a natural family, indexed by a real parameter~$p$, of
conducting this random procedure.  Furthermore, the parameter $p$ is
bound to vary in the interval $(0,q_n)$, where $q_n$ is the root of
$H_n(t)$ introduced earlier; and letting $p$ tend to~$q_n$ by inferior
values, the distributions induced on braids weight  large braids more
and more, such that at the limit we obtain a natural uniform measure
on ``infinite braids''. In turn, we shall derive in the next section
information on large random braids, that is to say, on random braids
of size $k$ when $k$ is large enough, based on the notion of uniform
measure on infinite braids.

\subsection{Generalized braids}
\label{subsubsec:gen_braids}

Considering the extended Garside decomposition of braids, one sees that
elements of $\bbr n$ are in bijection with \emph{infinite paths
  in $(\SS_n, \to)$ that eventually hit~$\unit$}. Therefore, it is
natural to define a compactification $\bbrbar n$ as the set of all infinite
paths in this graph. As a subset of $\SS_n^{\bbN^*}$, it is endowed with a
canonical topology, for which it is compact. Moreover, the restriction of
this topology to $\bbr n$ is the discrete topology, and $\bbrbar n$ is the
closure of~$\bbr n$. This is the set of \emph{generalized braids}. We endow
the set $\bbrbar n$ with its Borel \slgb. All measures we shall consider on
$\bbrbar n$ will be finite and Borelian.

We may refer to elements of $\bbr n$ as \emph{finite braids}, to emphasize
their status as elements of~$\bbrbar n$\,.  We define the \emph{boundary}
$\bdbbr n$ by:
\[
  \bdbbr n=\bbrbar n\setminus\bbr n\,.
\]
Elements in $\bdbr n$ correspond to infinite paths in $(\SS_n,\to)$ that
never hit~$\unit$, we may thus think of them as \emph{infinite braids}.

If $(x_1,\ldots,x_p)$ is a finite path in the graph $(\SS_n,\to)$, the
corresponding cylinder set $\DD_{(x_1,\ldots,x_p)}$ is defined as the set of
paths starting with vertices $(x_1,\ldots,x_p)$. Cylinder sets are both open
and closed, and they generate the topology on~$\bbrbar n$\,.

\begin{definition}
  \label{def:2}
For $x\in\bbr n$ of Garside decomposition $(x_1,\ldots,x_p)$, we define the
\emph{Garside cylinder}, and we denote by~$\CC_x$\,, the cylinder subset of
$\bbrbar n$ given by $\CC_x=\DD_{(x_1,\ldots,x_p)}$\,.
\end{definition}

Garside cylinders only reach those cylinders sets of the form
$D=\DD_{(x_1,\ldots,x_p)}$ with $x_p\neq\unit$\,. But if $x_p=e$\,, then $D$
reduces to the singleton~$\{x\}$, with $x=x_1\cdot\ldots\cdot x_p$\,. And
then, denoting by $q$ the greatest integer with $x_q\neq\unit$ and writing
$y=x_1\cdot\ldots\cdot x_q$\,, one has:
\[
\{x\}=\CC_{y}\setminus\bigcup_{z\in\SS_n\setminus\{\unit\}\tq
  x_q\to z}\CC_{y\cdot z}\,.
\]
It follows that \emph{Garside cylinders generate the topology
  on~$\bbrbar n$}\,, which implies the following result.

\begin{proposition}
  \label{prop:5}
Any finite measure on the space $\bbrbar n$ of generalized braids is entirely
determined by its values on Garside cylinders. In other words, if $\nu$ and
$\nu'$ are two finite measures on $\bbrbar n$ such that
$\nu(\CC_x)=\nu'(\CC_x)$ for all $x\in\bbr n$\,, then $\nu=\nu'$.
\end{proposition}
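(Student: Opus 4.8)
The plan is to reduce the statement to the standard uniqueness theorem for finite measures: if two finite measures agree on a $\pi$-system that generates the Borel \slgb{} and assign the same total mass, then they coincide. The natural $\pi$-system here is the family of \emph{all} cylinder sets $\DD_{(x_1,\ldots,x_p)}$, since the intersection of two cylinders is again a cylinder (or empty), and, because $\SS_n$ is finite, $\bbrbar n$ is a compact second-countable space on which the cylinders form a countable base of clopen sets generating the topology, hence generating the Borel \slgb. So the entire task is to upgrade the hypothesis ``$\nu$ and $\nu'$ agree on Garside cylinders'' to ``$\nu$ and $\nu'$ agree on every cylinder''.

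For this I would exploit the dichotomy already recorded just before the statement. A nonempty cylinder $\DD_{(x_1,\ldots,x_p)}$ corresponds to a path, so $x_i\to x_{i+1}$ for all $i$. If $x_p\neq\unit$, then—since $\unit\to z$ forces $z=\unit$—none of the $x_i$ can be $\unit$, so $(x_1,\ldots,x_p)$ is the Garside normal form of $x=x_1\cdot\ldots\cdot x_p$ and $\DD_{(x_1,\ldots,x_p)}=\CC_x$ is already a Garside cylinder. If instead $x_p=\unit$, the path is forced to stay at $\unit$ and the cylinder is the singleton $\{x\}$, where $x=x_1\cdot\ldots\cdot x_q$ and $q$ is the last non-unit index; the displayed identity
\[
\{x\}=\CC_{y}\setminus\bigcup_{z\in\SS_n\setminus\{\unit\}\tq x_q\to z}\CC_{y\cdot z}\,,\qquad y=x_1\cdot\ldots\cdot x_q\,,
\]
expresses it as a Garside cylinder minus a \emph{finite} disjoint union of Garside cylinders (finiteness coming again from $\abs{\SS_n}<\infty$, and each $\CC_{y\cdot z}$ being a Garside cylinder because $(x_1,\ldots,x_q,z)$ is normal). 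By finite additivity, $\nu(\{x\})$ and $\nu'(\{x\})$ are then pinned down by the common values on Garside cylinders, so $\nu$ and $\nu'$ agree on every cylinder.

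It remains to match total masses and conclude. Writing $\bbrbar n=\bigcup_{x_1\in\SS_n}\DD_{(x_1)}$ as a finite disjoint union of length-one cylinders gives $\nu(\bbrbar n)=\nu'(\bbrbar n)$. Consequently the collection $\{A\tq\nu(A)=\nu'(A)\}$ is a $\lambda$-system—closure under complements uses the equal finite total mass, and closure under increasing unions uses continuity from below of finite measures—containing the $\pi$-system of cylinders, so Dynkin's theorem gives equality on the whole Borel \slgb, i.e.\ $\nu=\nu'$. I do not expect a genuine obstacle: the only point requiring care is the handling of the terminal-$\unit$ (singleton) cylinders through the finite subtraction identity above, which is precisely the content already extracted in the paragraph preceding the statement; everything else is the routine $\pi$-$\lambda$ machinery.
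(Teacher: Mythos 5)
Your proposal is correct and is essentially the paper's proof: the paper takes the Garside cylinders themselves, augmented with the empty set, as the $\pi$-system (any two Garside cylinders being nested or disjoint) and invokes the classical uniqueness theorem, having already established---via the very subtraction identity $\{x\}=\CC_{y}\setminus\bigcup_{z}\CC_{y\cdot z}$ that you use---that Garside cylinders generate the topology and hence the Borel \slgb{} of~$\bbrbar n$. Your only deviations are organizational: you enlarge the $\pi$-system to all cylinders and spell out the total-mass and $\lambda$-system bookkeeping that the paper delegates to its citation of Billingsley.
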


\begin{proof}
  We have already seen that Garside cylinders generate the topology,
  and thus the Borel \slgb\ of~$\bbrbar n$\,. The collection of
  Garside cylinders, augmented with the empty set, is obviously stable by intersection:
  \[
    \forall x,y\in\bbr n\quad \text{ either }\CC_x \subseteq \CC_y
    \text{ or } \CC_x \supseteq \CC_y \text{ or } \CC_x \cap \CC_y = \emptyset\,,
  \]
and forms thus a so-called $\pi$-system. The result follows from classical
measure theory~\cite[Th.~3.3]{billingsley95}.
\end{proof}

Garside cylinders are very natural from the point of view of the normal
forms, however they are somewhat unnatural from the algebraic point of view
as they discard most of the divisibility information
(\textit{cf.}~Remark~\ref{rem:1}). A more natural notion is that of
\emph{visual cylinder}, which corresponds, for a given finite braid $x\in\bbr
n$, to the subset of those generalized braids which are ``left divisible''
by~$x$. It will be useful to differentiate between generalized braids and
infinite braids, therefore we introduce both the \emph{full visual cylinder
$\Up x$} and the \emph{visual
  cylinder~$\upp x$}, as follows:
\begin{align*}
  \Up x&=\text{Closure}\bigl(\{x\cdot z\ :\ z\in\bbr n\}\bigr)\,,&
\upp x&=\Up x\cap\bdbbr n\,,
\end{align*}
where $\text{Closure}(A)$ denotes the topological closure of the set~$A$.

The relationship between Garside cylinders and visual cylinders is given by
the following result.

\begin{lemma}
\label{lem:full-visual-is-union-of-garside} For each finite braid $x \in \bbr
n$, the full visual cylinder $\Up x$ is the following disjoint union of
Garside cylinders:
\begin{equation}
\label{eq:defUp}
  \Up x = \bigcup_{y \in \bbr n[x]} \CC_{x \cdot y}.
\end{equation}
\end{lemma}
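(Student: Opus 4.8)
The plan is to fix $k=\height(x)$ and split~\eqref{eq:defUp} into three checks: the union is disjoint, $\Up x$ is contained in it, and it is contained in $\Up x$. First I would record the key finiteness fact that $\bbr n[x]$ is finite: if $y\in\bbr n[x]$ then $\height(x\cdot y)=k$, so part~\ref{pro:garside-1} of Proposition~\ref{proposition:dehornoy2013foundations} gives $x\cdot y\leql\Delta^k$, exhibiting $x\cdot y$ as one of the finitely many left-divisors of $\Delta^k$; cancellativity of $\bbr n$ makes $y\mapsto x\cdot y$ injective, so $\bbr n[x]$ is finite. This is what will let me treat the right-hand side of~\eqref{eq:defUp} as a finite union of clopen Garside cylinders, hence a closed set. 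For disjointness I would note that every $x\cdot y$ with $y\in\bbr n[x]$ has height exactly $k$, so its Garside decomposition has length $k$; for $y\neq y'$ cancellation gives $x\cdot y\neq x\cdot y'$, and two Garside cylinders whose defining finite paths have the same length $k$ but are distinct cannot be nested, hence are disjoint.

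For the inclusion $\Up x\subseteq\bigcup_{y\in\bbr n[x]}\CC_{x\cdot y}$, since the right-hand side is closed it suffices to place each generating braid $x\cdot z$, with $z\in\bbr n$, inside it. I would set $w=(x\cdot z)\wedgel\Delta^k$ and invoke part~\ref{pro:garside-2} of Proposition~\ref{proposition:dehornoy2013foundations}, which identifies $w$ with the product of the first $k$ factors of the Garside decomposition of $x\cdot z$; since $x\leql x\cdot z$ forces $\height(x\cdot z)\geq k$, these factors are non-unit and the Garside decomposition of $w$ is a prefix of that of $x\cdot z$, so $x\cdot z\in\CC_w$. It then remains to see that $w=x\cdot y$ for some $y\in\bbr n[x]$: part~\ref{pro:garside-3} of the proposition yields $x\leql(x\cdot z)\wedgel\Delta^k=w$, and combining $x\leql w\leql\Delta^k$ with the monotonicity of the height pins down $\height(w)=k$, i.e.\ $y\in\bbr n[x]$. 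I expect this inclusion to be the main obstacle, since it is exactly the place where the order-theoretic relation $x\leql x\cdot z$ has to be converted into a statement about the truncated normal form $(x\cdot z)\wedgel\Delta^k$; the three parts of Proposition~\ref{proposition:dehornoy2013foundations} are the tools that make this conversion work, and the finiteness of $\bbr n[x]$ is what reduces the whole inclusion to finite braids rather than to a direct handling of limit points.

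For the reverse inclusion it is enough to prove $\CC_{x\cdot y}\subseteq\Up x$ for each $y\in\bbr n[x]$. Given a path $\xi=(\xi_1,\xi_2,\ldots)\in\CC_{x\cdot y}$, its first $k$ entries are the Garside decomposition of $x\cdot y$, so $\xi_1\cdot\ldots\cdot\xi_k=x\cdot y$ and in particular $x\leql\xi_1\cdot\ldots\cdot\xi_k$. I would then approximate $\xi$ by the finite truncations $\xi^{(N)}=\xi_1\cdot\ldots\cdot\xi_N$ for $N\geq k$: each obeys $x\leql\xi_1\cdot\ldots\cdot\xi_k\leql\xi^{(N)}$, so $\xi^{(N)}\in\{x\cdot z\tq z\in\bbr n\}$, while $\xi^{(N)}$ agrees with $\xi$ on its first $N$ coordinates and therefore $\xi^{(N)}\to\xi$ in $\bbrbar n$. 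Hence $\xi$ lies in the closure $\Up x$. The last thing I would verify is that this reasoning is insensitive to whether $\xi$ is a finite or an infinite braid, and that it also covers the degenerate case $x=\unit$, where $\bbr n[x]=\SS_n$ and $\Up x=\bbrbar n$; this is precisely the situation in which the convention $\height(\unit)=1$ is needed for the statement to hold.
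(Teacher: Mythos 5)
Your proof is correct and follows essentially the same route as the paper's: both arguments reduce the hard inclusion to finite braids via parts~\ref{pro:garside-2} and~\ref{pro:garside-3} of Proposition~\ref{proposition:dehornoy2013foundations}, and both handle limit points by a closure/density argument using that finite braids are dense in each Garside cylinder. The only difference is one of explicitness: you spell out the finiteness of $\bbr n[x]$ (hence closedness of the right-hand side), the disjointness of the cylinders, and the truncation argument, all of which the paper's shorter proof leaves implicit in the phrase ``respective topological closures.''
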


\begin{proof}
  We first observe that
  $\Up x \cap \bbr n = \bigcup_{y \in \bbr n[x]} (\bbr n \cap \CC_{x
    \cdot y})$.
  Indeed, the $\supseteq$ inclusion is obvious, while the converse one
  is a consequence of points~\ref{pro:garside-2}
  and~\ref{pro:garside-3} of
  Proposition~\ref{proposition:dehornoy2013foundations}.  Since
  $\Up x$ and $\bigcup_{y \in \bbr n[x]} \CC_{x \cdot y}$ are the
  respective topological closures of $\Up x \cap \bbr n$ and of
  $\bigcup_{y \in \bbr n[x]} (\bbr n \cap \CC_{x \cdot y})$
  in~$\bbrbar n$, the result follows.
\end{proof}

Hence, $\Up x$, as a finite union of Garside cylinders, is also open and
closed in~$\bbrbar n$. In the same way, $\upp x$ is open and closed
in~$\bdbbr n$.

%

\subsection{Studying finite measures on generalized braids via
  the graded Möbius transform}
\label{subsec:measures}

In this subsection, we study finite measures on the set $\bbrbar n$ of
generalized braids.

Assume that $\nu$ is some finite measure on~$\bbrbar n$\,. Then for practical
purposes, we are mostly interested in the values of $\nu$ on Garside
cylinders~$\nu(\CC_x)$. However, most natural measures will enjoy good
properties with respect to the full visual cylinders~$\Up x$, which is not
surprising as these sets are most natural from the point of view of
divisibility properties. For instance, the limits $\nu$ of uniform measures
on the set of braids of length $k$ will satisfy $\nu(\Up x) = p^{\abs{x}}$
for some~$p$, see Definition~\ref{def:1} and Theorem~\ref{thr:9} below.

Henceforth, to understand these measures, we need to relate $\nu(\CC_x)$ and
$\nu(\Up x)$ in an explicit way, and this is where the graded Möbius
transform of Subsection~\ref{sec:grade-mobi-transf} plays a key role, as
shown by Proposition~\ref{cor:1} below. In turn, Proposition~\ref{cor:1}
provides a nice probabilistic interpretation of the graded Möbius transform.

\begin{proposition}
  \label{cor:1}
  Let $\nu$ be a finite measure on~$\bbrbar n$\,. Let
  $f:\bbr n\mapsto\bbR$ be defined by $f(x)=\nu(\Up x)$\,, and let
  $h:\bbr n\mapsto\bbR$ be the graded Möbius transform of~$f$. Then,
  for every integer $k\geq1$ and for every finite braid $y$ of
  height~$k$, holds:
\begin{gather}
  \label{eq:16}
  \nu(\CC_y)=h(y).
\end{gather}
\end{proposition}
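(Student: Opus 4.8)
The plan is to relate the two families of cylinders---Garside cylinders $\CC_y$ and full visual cylinders $\Up x$---through the decomposition in Lemma~\ref{lem:full-visual-is-union-of-garside}, and then recognize the resulting identity as precisely the inversion formula of Theorem~\ref{thr:3}. Since $f(x)=\nu(\Up x)$ and Lemma~\ref{lem:full-visual-is-union-of-garside} expresses $\Up x$ as the \emph{disjoint} union $\bigcup_{y\in\bbr n[x]}\CC_{x\cdot y}$, the finite additivity of the measure~$\nu$ yields immediately
\[
  f(x)=\nu(\Up x)=\sum_{y\in\bbr n[x]}\nu(\CC_{x\cdot y})\,.
\]
This is the crucial bridge: it rewrites $f$ in terms of the quantities $\nu(\CC_{x\cdot y})$, summed over the very index set $\bbr n[x]$ that appears in the graded Möbius summation formula~\eqref{eq:11}.

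\textbf{Matching with the graded Möbius transform.}

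First I would introduce the auxiliary function $g:\bbr n\to\bbR$ defined by $g(x)=\nu(\CC_x)$, so that the displayed identity reads $f(x)=\sum_{y\in\bbr n[x]}g(x\cdot y)$ for all $x\in\bbr n$. This is exactly equation~\eqref{eq:11} of Theorem~\ref{thr:3} with the pair $(f,g)$ in the roles of $(f,h)$. By the ``if and only if'' of that theorem, $g$ is therefore the graded Möbius transform of~$f$. But $h$ is \emph{also}, by hypothesis, the graded Möbius transform of~$f$; since the graded Möbius transform is given by the explicit formula~\eqref{eq:4**} and hence is unique, we conclude $g=h$, that is $\nu(\CC_x)=h(x)$ for every $x\in\bbr n$. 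In particular $\nu(\CC_y)=h(y)$ for each finite braid $y$ of any height $k\geq1$, which is the claimed identity~\eqref{eq:16}.

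\textbf{The main obstacle.}

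The only genuinely delicate point is verifying that the union in Lemma~\ref{lem:full-visual-is-union-of-garside} is truly disjoint, so that finite additivity applies; but this disjointness is already part of the statement of that lemma, which I am entitled to invoke. One should also check that the sum $\sum_{y\in\bbr n[x]}\nu(\CC_{x\cdot y})$ is a legitimate (possibly infinite) sum of a measure over a countable disjoint union---this is fine since $\nu$ is a finite Borel measure and the $\CC_{x\cdot y}$ are Borel sets, so countable additivity gives the equality with $\nu(\Up x)$ directly. Thus the proof reduces to a clean two-line argument: apply $\sigma$-additivity through Lemma~\ref{lem:full-visual-is-union-of-garside}, then invoke the uniqueness direction of Theorem~\ref{thr:3}. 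The statement is framed for $y$ of height exactly $k$ only to emphasize the practical case, but the conclusion in fact holds for all finite braids without restriction.
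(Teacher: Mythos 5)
Your proof is correct and is essentially identical to the paper's: both decompose $\Up x$ into Garside cylinders via Lemma~\ref{lem:full-visual-is-union-of-garside}, apply additivity of $\nu$, and then recognize the resulting identity as the characterization~\eqref{eq:11} of Theorem~\ref{thr:3}, so that $y\mapsto\nu(\CC_y)$ must coincide with~$h$. The only superfluous worry is your appeal to countable additivity: since any $y\in\bbr n[x]$ satisfies $x\cdot y\leql\Delta^{\height(x)}$ and hence $\abs{y}\leq\height(x)\,\abs{\Delta}-\abs{x}$, the set $\bbr n[x]$ is finite and finite additivity suffices.
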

\begin{proof}
The decomposition~\eqref{eq:defUp} of a full visual cylinder as a disjoint
union of Garside cylinders shows that
\begin{equation*}
  \nu(\Up x) = \sum_{y\in \bbr n[x]} \nu(\CC_{x\cdot y}).
\end{equation*}
Thus, the characterization~\eqref{eq:11} of the graded Möbius transforms
shows that $y\mapsto \nu(\CC_y)$ is the graded Möbius transform of $x \mapsto
\nu(\Up x)$, as claimed.
\end{proof}

\begin{corollary}
  \label{cor:3}
  A finite measure $\nu$ on $\bbrbar n$ is entirely determined by its
  values $\nu(\Up x)$ on the countable collection of full visual
  cylinders.
\end{corollary}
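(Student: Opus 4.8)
The plan is to chain together the two determination results already established, namely Proposition~\ref{prop:5} (a finite measure is fixed by its values on Garside cylinders) and Proposition~\ref{cor:1} (the values on Garside cylinders are the graded Möbius transform of the values on full visual cylinders). First I would record that the collection of full visual cylinders $\{\Up x \tq x \in \bbr n\}$ is indeed countable, since $\bbr n$ is a countable monoid.

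Next I would set $f(x) = \nu(\Up x)$ for every finite braid $x$, so that $f$ is a real-valued function on $\bbr n$ that is entirely known from the prescribed data. Applying the explicit formula~\eqref{eq:4**} for the graded Möbius transform, I would define $h : \bbr n \to \bbR$ by
\[
h(x) = \sum_{X \subseteq \Sigma} (-1)^{|X|} \un{\Delta_X \in \bbr n[x]} f(x \cdot \Delta_X).
\]
Since this is a \emph{finite} sum over the subsets of $\Sigma$, the function $h$ is well-defined and completely determined by $f$, hence by the values $\nu(\Up x)$.

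Then I would invoke Proposition~\ref{cor:1}, which identifies $h$ as exactly the assignment $y \mapsto \nu(\CC_y)$ on Garside cylinders. Thus the values $\nu(\Up x)$ determine, via the graded Möbius transform, all the values $\nu(\CC_y)$. Finally, Proposition~\ref{prop:5} guarantees that a finite measure on $\bbrbar n$ is entirely fixed by its values on Garside cylinders, so two finite measures agreeing on all full visual cylinders must agree on all Garside cylinders and therefore coincide. I do not anticipate a genuine obstacle here, as this is a direct consequence of the preceding propositions; the only point requiring care is to confirm that the graded Möbius transform is applicable to the possibly non-summable function $f$, but this is precisely why the graded variant—whose defining sum is finite—was introduced in Subsection~\ref{sec:grade-mobi-transf}.
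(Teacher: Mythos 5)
Your proposal is correct and follows exactly the paper's own argument: the paper proves Corollary~\ref{cor:3} by combining Proposition~\ref{prop:5} (determination by Garside cylinders) with Proposition~\ref{cor:1} (Garside cylinder values are the graded Möbius transform of the full visual cylinder values), which is precisely your chain of reasoning. Your additional remarks—the countability of the collection and the fact that the graded Möbius transform is a finite sum so no summability hypothesis is needed—are accurate and merely make explicit what the paper leaves implicit.
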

\begin{proof}
  According to Proposition~\ref{prop:5}, a finite measure $\nu$ is
  entirely determined by its values on Garside cylinders. Hence the
  result follows from Proposition~\ref{cor:1}.
\end{proof}

\subsection{Uniform measures}
\label{sec:uniform-measures}

Our ultimate goal is to understand the uniform measure $\mu_{n,k}$ on braids
in $\bbr n$ of a given length~$k$, when $k$ tends to infinity. We will see
below in Theorem~\ref{thr:9} that this sequence of measures converges to a
measure on $\bdbbr n$ which behaves nicely on the visual cylinders~$\upp x$
(this is not surprising as these are the natural objects from the point of
view of the monoid structure on~$\bbr n$).

Therefore, it is good methodology to study the general class of measures
which do behave nicely on \emph{full} visual cylinders.  Our usual
conventions and notations are in force throughout this subsection, in
particular concerning $\bbr n$ which may be either $\br n$ or~$\dbr n$\,.

\begin{definition}
  \label{def:1}
A \emph{uniform measure for braids} of parameter $p\geq0$ is a probability
measure $\nu_p$ on $\bbrbar n$ satisfying:
\begin{gather*}
  \forall x\in\bbr n\quad \nu_p(\Up x)=p^{\abs{x}}\,.
\end{gather*}
\end{definition}

Although not apparent from this definition, we will see in Theorem~\ref{thr:1}
below that such a measure either weights $\bbr n$ or~$\bdbbr n$, but not
both. Theorem~\ref{thr:1} will describe quite precisely all uniform measures.
It will allow us to define the \emph{uniform measure at infinity} as the
unique non trivial uniform measure supported by the boundary~$\bdbr n$\,, see
Definition~\ref{def:7}. A realization result for uniform measures will be the
topic of Subsection~\ref{sec:mark-chain-real}.

Before coming to the theorem, we state a key lemma.

\begin{lemma}
\label{lem:3} Let $\nu$ be a uniform measure of parameter $p<1$. Assume that
$\nu$ is concentrated at infinity, \emph{i.e.}, $\nu(\bdbbr n)=1$. Then
$H_n(p)=0$.

Furthermore, let $B=(B_{x,x'})$ be the non-negative matrix indexed by pairs
of simple braids $(x,x')$ such that $x,x'\in\SS_n\setminus\{\unit,\Delta\}$,
and defined by:
  \[
    B_{x,x'}=\un{x\to x'}p^{\abs{x'}}\,.
  \]
  Then $B$ is a primitive matrix of spectral radius~$1$. The Perron
  right eigenvector of $B$ is the restriction to
  $\SS_n\setminus\{\unit,\Delta\}$ of the vector $g$ defined
  in\/~\eqref{eq:17}.
\end{lemma}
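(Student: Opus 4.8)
The statement has two parts. First, I need to show that if a uniform measure $\nu$ of parameter $p < 1$ is concentrated at infinity, then $H_n(p) = 0$. Second, I need to identify the Perron data of the matrix $B$. The plan is to exploit Proposition~\ref{cor:1}, which tells us that $\nu(\CC_y) = h(y)$ where $h$ is the graded Möbius transform of $f(x) = \nu(\Up x) = p^{\abs{x}}$. By Corollary~\ref{cor:cor}, this graded Möbius transform factors as $h(x) = p^{\abs{x_1}+\ldots+\abs{x_{k-1}}} h(x_k)$, and by the coincidence of the graded and ordinary Möbius transforms on $\SS_n$ (noted after Definition~\ref{def:4}), $h(x_k)$ agrees with the ordinary Möbius transform of $f$ restricted to $\SS_n$. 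So the vector $h$ restricted to simple braids is exactly the $h$ appearing in Lemma~\ref{lem:2}, and by that lemma $h(x) = f(x) g(x) = p^{\abs{x}} g(x)$ with $g$ as in~\eqref{eq:17}.

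\textbf{Deriving $H_n(p)=0$.} First I would write the total mass condition. Since $\nu$ is a probability measure concentrated at infinity, $\nu(\{x\}) = 0$ for every finite braid $x$; equivalently, the mass of each Garside cylinder is exhausted by its proper sub-cylinders. Concretely, I would express $\nu(\CC_y)$ for $y$ of height $k$ as a sum over the cylinders at height $k+1$ refining it, using that the paths through $y$ must continue: $\nu(\CC_y) = \sum_{z \,:\, x_k \to z,\ z \neq \unit} \nu(\CC_{y \cdot z})$, the $\unit$ case being excluded precisely because $\nu$ gives no mass to finite braids. Translating through $\nu(\CC_y) = h(y)$ and the factorization of Corollary~\ref{cor:cor}, this balance equation becomes, after cancelling the common prefix factor $p^{\abs{x_1}+\cdots+\abs{x_{k-1}}}$, the identity $h(x_k) = \sum_{z \neq \unit,\ x_k \to z} p^{\abs{x_k}} h(z)$ restricted to $\SS_n \setminus\{\unit\}$. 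The $\unit$ term missing from this sum is what forces the constraint: comparing with the defining relation $h(x) = f(x) g(x) = p^{\abs{x}} g(x)$ from Lemma~\ref{lem:2}, and with the fact that for $x = \unit$ we have $h(\unit) = H_n(p)$ by~\eqref{eq:19}, the vanishing of the $\unit$-mass produces exactly $H_n(p) = 0$. I would make this precise by summing the balance relation appropriately or by testing it against the transition at the root.

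\textbf{The matrix $B$ and its Perron data.} For the second part, the primitivity of $B$ is immediate from Proposition~\ref{prop:3}: the support of $B$ is precisely the Charney graph $(\SS_n \setminus \{\unit,\Delta\}, \to)$, which is strongly connected and has loops, and multiplying entries by the positive scalars $p^{\abs{x'}}$ preserves the zero pattern, hence preserves primitivity. For the eigenvector claim, I would verify directly that the restriction of $g$ to $\SS_n \setminus \{\unit,\Delta\}$ satisfies $(Bg)(x) = g(x)$. Unfolding, $(Bg)(x) = \sum_{x'} \un{x \to x'} p^{\abs{x'}} g(x')$, where the sum ranges over $x' \in \SS_n \setminus\{\unit,\Delta\}$. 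Using $h(x') = p^{\abs{x'}} g(x')$ from Lemma~\ref{lem:2}, this equals $\sum_{x' \neq \unit,\Delta,\ x \to x'} h(x')$. The definition~\eqref{eq:17} of $g$ gives $g(x) = \sum_{y \in \SS_n} \un{x \to y} h(y)$, so I must account for the difference between this full sum and the restricted one: the terms $y = \unit$ and $y = \Delta$. Here $h(\Delta) = p^{\abs{\Delta}}$ (simple braids satisfy $h(\Delta)=f(\Delta)g(\Delta)$ and $g(\Delta) = \un{\Delta\to\unit}h(\unit)=H_n(p)=0$ forces reconsideration — so I must track $h(\Delta)$ and $h(\unit) = H_n(p) = 0$ carefully). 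The key simplification is that $H_n(p) = 0$ kills the $\unit$ contribution, while the $\Delta$ contribution vanishes because no $x \in \SS_n\setminus\{\unit,\Delta\}$ satisfies $x \to \Delta$ (by the initiality relations recalled before Proposition~\ref{proposition:dehornoy2013foundations}, $x \to \Delta \iff x = \Delta$). Thus the restricted sum equals the full sum, giving $(Bg)(x) = g(x)$ and confirming $g$ is the Perron right eigenvector for eigenvalue $1$. That the spectral radius is exactly $1$ then follows from Perron--Frobenius, since $g$ restricted to the Charney graph has strictly positive entries (which I would need to confirm, e.g.\ from $g(x) \geq \un{x \to x} h(x) = p^{2\abs{x}} g(x) > 0$ combined with positivity of $h$ on generators).

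\textbf{Main obstacle.} The delicate point is the careful bookkeeping of the boundary vertices $\unit$ and $\Delta$ when passing between the full sum defining $g$ in~\eqref{eq:17} and the restricted sum implicit in $B$. The whole argument hinges on $H_n(p) = 0$ annihilating the $\unit$-term, so the two parts of the lemma are genuinely intertwined rather than independent, and I expect the cleanest route is to establish $H_n(p)=0$ first and then feed it into the eigenvector computation. A secondary subtlety is confirming strict positivity of $g$ on the Charney graph to legitimately invoke Perron--Frobenius uniqueness; I would handle this via the loop relation $x \to x$ together with the sign analysis of $h$ on simple braids.
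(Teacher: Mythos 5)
Your first part and the bulk of your second part follow the paper's own route. The identity $H_n(p)=0$ does come from Proposition~\ref{cor:1} together with~\eqref{eq:19}; your cylinder-balance derivation is a valid (if roundabout) way to extract $h(\unit)=0$, the paper simply evaluates directly $h(\unit)=\nu(\CC_\unit)=\nu(\{\unit\})=0$. Likewise, your verification that $(B\widetilde{g})_x=\widetilde{g}(x)$ --- rewriting $p^{\abs{x'}}g(x')=h(x')$ via Lemma~\ref{lem:2}, killing the $\unit$ term with $h(\unit)=H_n(p)=0$ and the $\Delta$ term with $x\to\Delta\iff x=\Delta$ --- is exactly the paper's computation, and primitivity via Proposition~\ref{prop:3} is the same in both.

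The genuine gap is the final step: you must show that $\widetilde{g}$, the restriction of $g$ to $\SS_n\setminus\{\unit,\Delta\}$, is a nonzero non-negative vector before Perron--Frobenius theory can identify it as the Perron eigenvector and force the spectral radius to be $1$. Your proposed justification fails on three counts. First, not every vertex of the Charney graph carries a loop: $x\to x$ means $\Dl(x)\subseteq\Dr(x)$, which fails for instance for $x=\sigma_1\cdot\sigma_2$ in $\br 3$, whose successors are only $\unit$, $\sigma_2$, $\sigma_2\cdot\sigma_1$ (Figure~\ref{fig:acceptorB3}). Second, even at a looped vertex the bound $g(x)\geq \un{x\to x}\,h(x)$ with $h(x)=p^{\abs{x}}g(x)$ reads $(1-p^{\abs{x}})\,g(x)\geq 0$, which is implied by $g\geq 0$ and yields nothing; appending ``$>0$'' presupposes the conclusion. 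Third, ``positivity of $h$ on generators'' is nowhere established and is equivalent, via $h(\sigma)=p\,g(\sigma)$, to the positivity you are trying to prove. A structural warning sign is that your second-part argument never uses $p<1$, yet the claim is false at $p=1$: there $\widetilde{g}\equiv 0$ and $B$ is the adjacency matrix of the Charney graph, whose spectral radius exceeds $1$. The paper closes the gap precisely by using $p\neq 1$: non-negativity of $\widetilde{g}$ comes from $h=\nu(\CC_{\cdot})\geq 0$, and if $\widetilde{g}$ were identically zero, then $h$ would vanish on all of $\SS_n\setminus\{\Delta\}$ (recall $h(\unit)=0$), so the Möbius summation formula~\eqref{eq:1} would give $f(x)=h(\Delta)$ for every $x\in\SS_n$, making $p^{\abs{x}}$ constant on $\SS_n$ --- impossible for $p\neq1$. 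With $\widetilde{g}\geq 0$, $\widetilde{g}\neq 0$ and $B$ primitive, Perron--Frobenius theory then delivers in one stroke that $\rho(B)=1$ and that $\widetilde{g}$ is the (strictly positive) Perron eigenvector; strict positivity is an output of the theorem, not an input you need to supply. (A minor slip, inconsequential here: $g(\Delta)$ is not $\un{\Delta\to\unit}h(\unit)$ but $\sum_{y\in\SS_n}h(y)=f(\unit)=1$, since $\Delta\to y$ holds for every $y$; this is consistent with $h(\Delta)=p^{\abs{\Delta}}$.)
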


\begin{proof}
Let $f(x)=p^{\abs{x}}$\,, and let $h:\SS_n\to\bbR$ be the graded Möbius
transform of~$f$.

According to Proposition~\ref{cor:1}, we have
$h(\unit)=\nu(\CC_\unit)=\nu(\{\unit\})$. Since it is assumed that $\nu(\bbr
n)=0$, it follows that $h(\unit)=0$. But $H_n(p)=h(\unit)$, as previously
stated in~\eqref{eq:19}, hence $H_n(p)=0$, proving the first claim of the
lemma.

Let $g$ be defined on $\SS_n$ as in~\eqref{eq:17}, and let $\widetilde g$ be
the restriction of $g$ to $\SS_n\setminus\{\unit,\Delta\}$.  It follows from
Lemma~\ref{lem:2} that $h(x)=p^{\abs{x}}g(x)$ holds on $\SS_n$. Therefore the
computation of $B\widetilde g$ goes as follows, for
$x\in\SS_n\setminus\{\unit,\Delta\}$:
\[
(B\widetilde g)_{x} =\sum_{y\in\SS_n\setminus\{\unit,\Delta\}}\un{x \to y}p^{\abs{y}}g(y)=\sum_{y \in\SS_n\setminus\{\unit,\Delta\}}\un{x \to y}h(y).\]

But $h(\unit)=0$ on the one hand; and on the other hand, $x\to\Delta$ does
not hold since $x\neq\Delta$. Hence the above equality rewrites as:
\[(B\widetilde g)_x =\sum_{y\in\SS_n} \un{x \to y} h(y)=\widetilde g(x).\]

We have proved that $\widetilde g$ is right invariant for~$B$. Let us
prove that $\widetilde g$ is non identically zero. Observe that $h$ is
non-negative, as a consequence of Proposition~\ref{cor:1}. Therefore
$g$ is non-negative as well. If $\widetilde g$ were identically zero
on $\SS_n\setminus\{\unit,\Delta\}$, so would be $h$ on
$\SS_n\setminus\{\Delta\}$. The Möbius summation formula~\eqref{eq:1}
would imply that $f$ is constant, equal to $f(\Delta)$ on $\SS_n$\,,
which is not the case since we assumed $p\neq1$. Hence $\widetilde g$
is not identically zero.

But $B$ is also aperiodic and irreducible, hence primitive. Therefore
Perron-Frobenius theory~\cite[Chapter~1]{seneta81} implies that $\widetilde
g$ is actually \emph{the} right Perron eigenvector of~$B$, and $B$ is thus of
spectral radius~$1$.
\end{proof}

\begin{theorem}
  \label{thr:1}
  For each braid monoid $\bbr n$, uniform measures $\nu_p$ on $\bbrbar
  n$ are parametrized by the parameter $p$ ranging exactly over the
  closed set of reals $[0,q_n]\cup\{1\}$.
\begin{enumerate}
\item\label{item:4} For $p=0$, $\nu_0$ is the Dirac measure at\/~$\unit$.
\item\label{item:1} For $p=1$, $\nu_1$ is the Dirac measure on the element
    $\Delta^\infty$ defined by its infinite Garside decomposition:
    $(\Delta\cdot\Delta\cdot\ldots)$\,.
\item\label{item:6} For $p\in(0,q_n)$, the support of $\nu_p$ is~$\bbr n$,
    and it is equivalently characterized by:
  \begin{align}
    \label{eq:13}
\nu_p\bigl(\{x\}\bigr)&=H_n(p)\cdot p^{\abs{x}}&&\text{or}&
\nu_p(\Up x)&=p^{\abs{x}}
  \end{align}
for $x$ ranging over~$\bbr n$\,.
\item\label{item:7} For $p=q_n$, the support of $\nu_{q_n}$ is~$\bdbbr n$,
    and it is characterized by:
  \begin{gather}
    \label{eq:14}
    \forall x\in\bbr n\quad\nu_{q_n}(\upp x)=q_n^{\abs{x}}\,.
  \end{gather}
\end{enumerate}
\end{theorem}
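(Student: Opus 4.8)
The plan is to pin down every uniform measure through the single scalar $H_n(p)$, and then match each admissible $p$ with an honest probability measure. Uniqueness is immediate: by Corollary~\ref{cor:3} a finite measure is determined by its values on full visual cylinders, so there is at most one uniform measure of each parameter~$p$ and it only remains to decide for which $p$ one exists. The master computation I would carry out is the value of a uniform measure on a singleton. Since $\{x\} = \Up x \setminus \bigcup_{\sigma \in \Sigma} \Up{(x\cdot\sigma)}$ and $\bigcap_{\sigma \in X} \Up{(x\cdot\sigma)} = \Up{(x\cdot\Delta_X)}$ (the full visual cylinder of the least upper bound $\bigveel_{\sigma \in X} x\cdot\sigma = x\cdot\Delta_X$), inclusion--exclusion together with the defining relation $\nu_p(\Up y)=p^{\abs{y}}$ gives
\begin{gather*}
\nu_p(\{x\}) = p^{\abs{x}} - \sum_{\emptyset \neq X \subseteq \Sigma} (-1)^{\abs{X}+1} p^{\abs{x \cdot \Delta_X}} = p^{\abs{x}}\sum_{X \subseteq \Sigma}(-1)^{\abs{X}} p^{\abs{\Delta_X}} = H_n(p)\, p^{\abs{x}}
\end{gather*}
by the very definition of the Möbius polynomial (equivalently, this is Proposition~\ref{cor:1} fed through Corollary~\ref{cor:cor} and \eqref{eq:19}). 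So \emph{every} uniform measure satisfies $\nu_p(\{x\}) = H_n(p)\,p^{\abs{x}}$.

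Summing over $\bbr n$ gives $\nu_p(\bbr n) = H_n(p)\,G_n(p)$, which drives the whole dichotomy. For $p \in [0,q_n)$ the series $G_n(p)$ converges to $1/H_n(p)$, so $\nu_p(\bbr n)=1$, and the candidate $\nu_p = \sum_{x \in \bbr n} H_n(p)\,p^{\abs{x}}\,\delta_x$ is checked to be uniform by the resummation $\sum_{w \in \bbr n} H_n(p)\,p^{\abs{x\cdot w}} = H_n(p)\,p^{\abs{x}}\,G_n(p) = p^{\abs{x}}$, giving item~\ref{item:6}; the specialisation $p=0$ is the Dirac mass at $\unit$ (item~\ref{item:4}). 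The value $p=1$ is done by hand: $\delta_{\Delta^\infty}$ charges every $\Up x$, because $\Delta^\infty$ is the limit of the $\Delta^{\height(x)+m}$ (all left divisible by $x$), whence $\nu_1(\Up x)=1=1^{\abs{x}}$ (item~\ref{item:1}); and $p>1$ is impossible since $\nu_p(\Up \sigma)=p>1$ cannot occur for a probability measure.

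The heart of the argument is $p=q_n$. Here $H_n(q_n)=0$ (Corollary~\ref{cor:6}), so $\nu_{q_n}(\{x\})=0$ for all finite $x$ and the measure, if it exists, lives on $\bdbbr n$. To build it I would re-run the algebraic computation inside the proof of Lemma~\ref{lem:3}, whose only input is $h(\unit)=H_n(q_n)=0$, independently of any measure: it shows that the restriction $\widetilde g$ of $g$ to $\SS_n\setminus\{\unit,\Delta\}$ is a nonnegative nonzero right eigenvector of the primitive matrix $B=B(q_n)$ for the eigenvalue $1$, so $\widetilde g$ is its positive Perron eigenvector and $\operatorname{sp}(B(q_n))=1$; combined with Lemma~\ref{lem:2} this yields $h\geq 0$ on $\SS_n$, hence on all of $\bbr n$ via Corollary~\ref{cor:cor}. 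I then set $\nu_{q_n}(\CC_y):=h(y)$. The telescoping identity $\sum_{z\neq\unit,\ y_k\to z} h(y\cdot z)=h(y)$ (with $y_k$ the last Garside factor, again using $h(\unit)=0$) is exactly finite additivity along the tree of Garside cylinders, while $\sum_{x\in\SS_n}h(x)=f(\unit)=1$ from the Möbius inversion~\eqref{eq:1} fixes total mass~$1$. As Garside cylinders are clopen and generate the Borel $\sigma$-algebra, Carathéodory extension on this compact path space produces a Borel probability measure, carried by $\bdbbr n$ since finite braids have mass $0$; finally \eqref{eq:defUp} and the graded Möbius inversion of Theorem~\ref{thr:3} give $\nu_{q_n}(\upp x)=\nu_{q_n}(\Up x)=\sum_{y\in\bbr n[x]}h(x\cdot y)=q_n^{\abs{x}}$, which is item~\ref{item:7}.

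It remains to exclude $p\in(q_n,1)$. If a uniform measure existed there, the master formula forces $H_n(p)\geq 0$; the case $H_n(p)>0$ is absurd because $G_n(p)=+\infty$ beyond the radius of convergence (Corollary~\ref{cor:6}) would make $\nu_p(\bbr n)=H_n(p)\,G_n(p)>1$, while the case $H_n(p)=0$ puts the measure on $\bdbbr n$, so Lemma~\ref{lem:3} demands $\operatorname{sp}(B(p))=1$. This contradicts the strict monotonicity of $p\mapsto\operatorname{sp}(B(p))$: the entries $\un{x\to y}p^{\abs{y}}$ of the primitive matrix $B(p)$ are strictly increasing in $p$ wherever positive, so its Perron eigenvalue is strictly increasing, and we have just seen it equals $1$ at $q_n$. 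I expect this monotonicity-plus-Perron-eigenvector step, and the verification that the family $(h(y))$ glues into a genuine boundary probability measure, to be the main obstacles; everything else is bookkeeping around the identity $\nu_p(\{x\})=H_n(p)\,p^{\abs{x}}$.
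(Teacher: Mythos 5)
Your plan coincides with the paper's proof everywhere except at $p=q_n$: the uniqueness argument via Corollary~\ref{cor:3}, the inclusion--exclusion identity $\nu_p(\{x\})=H_n(p)\,p^{\abs{x}}$, the discrete construction on $(0,q_n)$, the treatment of $p\in\{0,1\}$, and the Perron--Frobenius exclusion of $(q_n,1)$ are all the paper's own steps. At $p=q_n$ the paper instead takes a weak cluster value of $(\nu_{p_j})_j$ with $p_j\uparrow q_n$, using compactness of $\bbrbar n$ and the Portemanteau theorem on the clopen sets $\Up x$, whereas you propose a direct construction: assign $h(y)$ to each Garside cylinder $\CC_y$ and extend. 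That route is viable --- it essentially anticipates the Markov-chain realization of Theorem~\ref{thr:2} --- but as written it contains a genuine gap.

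The gap is your claim that the computation inside Lemma~\ref{lem:3}, ``whose only input is $h(\unit)=H_n(q_n)=0$, independently of any measure'', shows that $\widetilde g$ is a \emph{nonnegative} nonzero eigenvector of $B(q_n)$. The eigenvector identity $B\widetilde g=\widetilde g$ and the nonvanishing $\widetilde g\not\equiv 0$ are indeed measure-free, but the nonnegativity is not: in the paper's proof of Lemma~\ref{lem:3}, $h\geq0$ (hence $g\geq0$) is obtained from Proposition~\ref{cor:1} applied to a uniform measure that is \emph{assumed to exist} and to be concentrated at infinity --- exactly the object you are in the middle of constructing, so the argument is circular. This matters at every subsequent step: without nonnegativity, an eigenvalue $1$ of a primitive matrix does not force spectral radius $1$ (subdominant eigenvalues can have sign-changing eigenvectors), $\widetilde g$ need not be the Perron vector, and, worse, your cylinder weights $\nu_{q_n}(\CC_y):=h(y)$ could a priori be negative, so the Carathéodory extension would not produce a measure; the defect also propagates to your exclusion of $p\in(q_n,1)$, which leans on $\operatorname{sp}(B(q_n))=1$. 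The repair is short and stays inside your framework: for each fixed $y$, the graded Möbius transform $h_p(y)$ is a polynomial in $p$ (a finite signed sum of powers of $p$), and for $p\in(0,q_n)$ Proposition~\ref{cor:1} applied to your already-constructed measures gives $h_p(y)=\nu_p(\CC_y)\geq0$; letting $p\to q_n^-$ yields $h_{q_n}\geq0$ on all of $\bbr n$. With that established, your telescoping identity, the mass computation $\sum_{x\in\SS_n}h(x)=1$, the Carathéodory step, the verification $\nu_{q_n}(\upp x)=q_n^{\abs x}$ via Theorem~\ref{thr:3}, and the monotonicity argument on $(q_n,1)$ all go through.
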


It follows from this statement that, except for the degenerated
measure~$\nu_1$\,, there exists a unique uniform measure on the
boundary~$\bdbbr n$\,. It is thus natural to introduce the following
definition.

\begin{definition}
  \label{def:7}
  The uniform measure on $\bdbbr n$ which is characterized by
  $\nu_{q_n}(\upp x)=q_n^{\abs{x}}$ for $x\in\bbr n$, is called the
  \emph{uniform measure at infinity}.
\end{definition}

\begin{proof}[Proof of Theorem~\ref{thr:1}.]
  The statements~\ref{item:4}--\ref{item:7} contains actually three
  parts: the existence of~$\nu_p$ for each $p\in[0,q_n]\cup\{1\}$, the
  uniqueness of the measures satisfying the stated characterizations,
  and that $[0,q_n]\cup\{1\}$ is the only possible range for~$p$.

  \emph{Existence and uniqueness of $\nu_p$ for
    $p\in[0,q_n]\cup\{1\}$.}\quad The cases $p=0$ and $p=1$
  (points~\ref{item:4} and~\ref{item:1}) are trivial. For
  $p\in(0,q_n)$ (point~\ref{item:6}), let $\nu_p$ be the discrete
  distribution on $\bbr n$ defined by the left hand side
  of~\eqref{eq:13}. Since $p<q_n$, the series $G_n(p)$ is convergent,
  and it implies that the following formula is valid in the field of
  real numbers:
  \[
    G_n(p)\cdot H_n(p)=1\,.
  \]
It implies in particular that $H_n(p)>0$, and thus:
\[
  \sum_{x\in\bbr n}\nu_p\bigl(\{x\}\bigr)=1\,,
\]
and therefore $\nu_p$ is a probability distribution on~$\bbr n$\,.

It remains to prove that $\nu_p$ is indeed uniform with parameter~$p$. Since
$\bbr n$ is left cancellative, we notice that, for each $x\in\bbr n$, the
mapping $y\in\bbr n\mapsto x\cdot y$ is a bijection of $\bbr n$ onto $\Up
x\,\cap\bbr n$. Whence:
\begin{align*}
  \nu_p(\Up x)&=H_n(p)\cdot p^{\abs{x}}\cdot\Bigl(\sum_{y\in\bbr n}p^{\abs{y}}\Bigr)=p^{\abs{x}}\,.
\end{align*}

Conversely, if $\nu$ is a probability measure on~$\bbrbar n$ such that
$\nu(\Up x)=p^{\abs{x}}$\,, then $\nu$ and $\nu_p$ agree on full visual
cylinders, hence $\nu=\nu_p$ according to Corollary~\ref{cor:3}.

We now treat the case of point~\ref{item:7}, corresponding to
$p=q_n$\,. For this, let $(p_j)_{j\geq1}$ be any sequence of reals
$p_j<q_n$ such that $\lim_{j\to\infty}p_j=q_n$\,, and such that
$(\nu_{p_j})_{j\geq1}$ is a weakly convergent sequence of probability
measures. Such a sequence exists since $\bbrbar n$ is a compact metric
space. Let $\nu$ be the weak limit of~$(\nu_{p_j})_{j\geq1}$\,.

Obviously, for each braid $x$ fixed:
\begin{align*}
  \lim_{j\to\infty} \nu_{p_j}(\Up x)=q_n^{\abs{x}}\,.
\end{align*}
But $\Up x$ is both open and closed in~$\bbrbar n$, it has thus an empty
topological boundary. The Portemanteau theorem~\cite{billingsley95} implies
that the above limit coincides with~$\nu(\Up x)$\,, hence $\nu(\Up
x)=q_n^{\abs{x}}$ for all $x\in\bbr n$\,. The same reasoning applied to every
singleton~$\{x\}$, for $x\in\bbr n$\,, yields:
\begin{align*}
  \nu(\{x\})=\lim_{j\to\infty}\nu_{p_j}(\{x\})=\lim_{j\to\infty}\frac{p_j^{\abs{x}}}{G_n(p_j)}=0\,,
\end{align*}
the later equality since $\lim_{t\to q_n^-}G_n(t)=+\infty$\,. Since $\bbr n$
is countable, it follows that $\nu$ puts weight on $\bdbbr n$ only, and thus:
\[
  \forall x\in\bbr n\quad\nu(\upp x)=\nu(\Up x)=q_n^{\abs{x}}\,.
\]

If $\nu'$ is a probability measure on $\bbrbar n$ satisfying $\nu'(\upp
x)=q_n^{\abs{x}}$ for every $x\in\bbr n$\,, then we observe first that $\nu'$
is concentrated on the boundary, since $\nu'(\bdbbr n)=\nu'(\upp\unit)=1$.
And since $\nu$ and $\nu'$ coincide on all visual cylinders~$\upp x$, for $x$
ranging over~$\bbr n$, it follows from Corollary~\ref{cor:3} that $\nu=\nu'$.

\emph{Range of~$p$.}\quad It remains only to prove that, if $\nu$ is a uniform
probability measure on $\bbrbar n$ of parameter~$p$, then $p=1$ or $p\leq
q_n$\,. Seeking a contradiction, assume on the contrary that $p>q_n$ and
$p\neq1$ holds.

We first show the following claim:
\begin{gather}
  \label{eq:23}
\nu(\bdbbr n)=1\,.
\end{gather}

Assume on the contrary $\nu(\bdbbr n)<1$, hence $\nu(\bbr n)>0$.  Then we
claim that the inclusion-exclusion principle yields:
\begin{gather}
\label{eq:20}
\forall x\in\bbr n\quad\nu\bigl(\{x\}\bigr)=H_n(p)\cdot p^{\abs{x}}\,.
\end{gather}

Indeed, for any braid $x\in\bbr n$\,, the singleton $\{x\}$ decomposes as:
\[\{x\}=\Up x\setminus\bigcup_{\sigma \in \Sigma}\Up(x\cdot\sigma)\]
and therefore:
\[\nu\left(\{x\}\right) = \sum_{I \subseteq \Sigma} (-1)^{|I|} \nu \Bigl(\bigcap_{\sigma \in I} \Up(x \cdot \sigma)\Bigr)
= \sum_{I \subseteq \Sigma} (-1)^{|I|} \nu \bigl(\Up(x \cdot \Delta_I)\bigr)\,.
\]

Note that the above equality is valid for any finite measure on~$\bbrbar
n$\,. Since $\nu$ is assumed to be uniform of parameter~$p$, it specializes
to the following:
\[
\nu\left(\{x\}\right)=\sum_{I\subseteq\Sigma}(-1)^{|I|}p^{\abs{x}+\abs{\Delta_I}}=p^{\abs{x}}\cdot H_n(p)\,,
\]
given the form~(\ref{eq:2}) for~$H_n(p)$. This proves our
claim~(\ref{eq:20}).

Together with $\nu(\bbr n)>0$, it implies $H_n(p)>0$. Consequently, summing
up $\nu(\{x\})$ for $x$ ranging over~$\bbr n$ yields $G_n(p)<\infty$. Hence
$p<q_n$\,, which is a contradiction since we assumed $p>q_n$\,. This proves
the claim~\eqref{eq:23}.

Next, consider the two matrices $B$ and $B'$ indexed by all braids $x \in
\SS_n \setminus \{\unit,\Delta\}$ and defined by:
\[B_{x,x'}=\un{x\to x'}p^{\abs{x'}} \quad\text{and}\quad B'_{x,x'}=\un{x\to x'}q_n^{\abs{x'}}.\]
They are both non-negative and primitive, and of spectral radius $1$
according to Lemma~\ref{lem:3} (which applies since $p\neq1$ by assumption).
According to Perron-Frobenius theory~\cite[Chapter~1]{seneta81}, there cannot
exist a strict ordering relation between them. Yet, this is implied by
$p>q_n$\,, hence the latter is impossible. The proof is complete.
\end{proof}

\begin{remark}[Multiplicative measures]
\label{rem:7}

Since the length of braids is additive, any uniform measure is
multiplicative, \emph{i.e.}, it satisfies: $\nu_p(\Up(x\cdot y))=\nu_p(\Up
x)\cdot\nu_p(\Up y)$.

Conversely, assume that $\nu$ is a multiplicative probability measure
on~$\bbrbar n$. Then $\nu$ is entirely determined by the values
$p_\sigma=\nu(\Up \sigma)$ for $\sigma \in \Sigma$.

If $\bbr n = \br n$, let us write $p_i$ instead of $p_{\sigma_i}$. The braid
relations $\sigma_i\cdot\sigma_{i+1}\cdot\sigma_i =
\sigma_{i+1}\cdot\sigma_i\cdot\sigma_{i+1}$ entail: $p_i
p_{i+1}(p_i-p_{i+1})=0$. Hence, if any two consecutive $p_i,p_{i+1}$ are non
zero, they must be equal. Removing the generators $\sigma_i$ for which
$p_i=0$, the braid monoid splits into a direct product of sub-braid monoids,
each one equipped with a uniform measure.

Similarly, if $\bbr n = \dbr n$, let us write $p_{i,j}$ instead
of~$p_{\sigma_i,\sigma_j}$.  Then the dual braid relations
$\sigma_{i,j} \cdot \sigma_{j,k} = \sigma_{j,k} \cdot
\sigma_{k,i}=\sigma_{k,i}\cdot\sigma_{i,j}$
(if $i < j < k$) yield the following three relations:
$p_{j,k}(p_{i,j}-p_{k,i})= 0$, $p_{i,k}(p_{i,j}-p_{j,k})=0$ and
$p_{i,j}(p_{j,k}-p_{i,k})=0$. Therefore the following implication
holds for all $i<j<k$:
$(p_{i,j}>0\wedge p_{j,k}>0)\implies p_{i,j}=p_{j,k}=p_{i,k}$.
Removing the generators $\sigma_{i,j}$ for which $p_{i,j} = 0$, the
dual braid monoid splits thus into a direct product of sub-dual braid
monoids, each one equipped with a uniform measure.

Therefore, without loss of generality, the study of multiplicative measures
for braid monoids reduces to the study of uniform measures. This contrasts with
other kinds of monoids, such as heap monoids, see~\cite{abbes15b} and the
discussion in Section~\ref{se-ext}.
\end{remark}

\subsection{Markov chain realization of
  uniform measures}
\label{sec:mark-chain-real}

Recall that generalized braids $\xi\in\bbrbar n$ are given by infinite
sequences of linked vertices in the graph $(\SS_n,\to)$. For each integer
$k\geq1$, let $X_k(\xi)$ denote the $k^{\text{th}}$ vertex appearing in an
infinite path $\xi\in\bbrbar n$\,. This defines a sequence of measurable
mappings
\[
X_k:\bbrbar n \to\SS_n\,,
\]
which we may interpret as random variables when equipping $\bbrbar n$ with a
probability measure, say for instance a uniform measure~$\nu_p$\,.

It turns out that, under any uniform measure~$\nu_p$\,, the process
$(X_k)_{k\geq1}$ has a quite simple form, namely that of a Markov chain. This
\emph{realization result} is the topic of the following theorem (the trivial
cases $p=0$ and $p=1$ are excluded from the discussion).

\begin{theorem}
  \label{thr:2}
  Let $p\in(0,q_n]$, and let $\nu_p$ be the uniform measure of
  parameter $p$ on~$\bbrbar n$\,. Let $h:\SS_n\to\bbR$ be the Möbius
  transform of $x\in\SS_n\mapsto p^{\abs{x}}$\,.
\begin{enumerate}
\item\label{item:8} Under~$\nu_p$\,, the process $(X_k)_{k\geq1}$ of simple
    braids is a Markov chain, taking its values in $\SS_n$ if $p<q_n$\,,
    and in $\SS_n\setminus\{\unit\}$ if $p=q_n$\,.
  \item\label{item:5} The initial measure of the chain coincides
    with~$h$, which is a probability distribution on~$\SS_n$\,. The
    initial distribution puts positive weight on every non unit simple
    braid, and on the unit $\unit$ if and only if $p<q_n$\,.
\item\label{item:9} The transition matrix $P$ of the chain $(X_k)_{k\geq1}$
    is the following:
  \begin{align}
\label{eq:18}
P_{x,x'}&=\un{x\to x'}p^{\abs{x}}\frac{h(x')}{h(x)},
  \end{align}
  where $x$ and $x'$ range over $\SS_n$ for $p<q_n$ or over
  $\SS_n\setminus\{\unit\}$ for $p=q_n$.
\end{enumerate}
\end{theorem}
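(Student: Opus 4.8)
The plan is to verify directly that the family of finite-dimensional distributions of $(X_k)_{k\ge1}$ under $\nu_p$ has exactly the product form characterizing a Markov chain with initial law $h$ and transition matrix $P$. Writing $\SS_n^\circ$ for $\SS_n$ if $p<q_n$ and for $\SS_n\setminus\{\unit\}$ if $p=q_n$, I would show that for every $k\ge1$ and every path $(y_1,\ldots,y_k)$ in $(\SS_n,\to)$ with all $y_j\in\SS_n^\circ$,
\[
\nu_p\bigl(X_1=y_1,\ldots,X_k=y_k\bigr)=h(y_1)\prod_{j=1}^{k-1}P_{y_j,y_{j+1}}=p^{\abs{y_1}+\cdots+\abs{y_{k-1}}}\,h(y_k).
\]
Once this identity holds for all admissible finite paths, the standard characterization of a Markov chain through its finite-dimensional distributions yields item~\ref{item:8}, with initial distribution and transition kernel read off from the right-hand side; it then only remains to check that $h$ is a genuine probability distribution on $\SS_n^\circ$ (item~\ref{item:5}) and that $P$ is stochastic on $\SS_n^\circ$ (item~\ref{item:9}).

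First I would establish the two global facts. Since every $x\in\SS_n$ is simple (hence of height $1$), Proposition~\ref{cor:1} gives $\nu_p(\CC_x)=h(x)$, the graded Möbius transform coinciding with the Möbius transform $h$ of $f\colon x\mapsto p^{\abs{x}}$ on simple braids. The Garside cylinders $\CC_x=\{X_1=x\}$, for $x$ ranging over $\SS_n$, form a partition of $\bbrbar n$, so $h\ge0$ and $\sum_{x\in\SS_n}h(x)=\nu_p(\bbrbar n)=1$; moreover $h(\unit)=H_n(p)$ by~\eqref{eq:19}, which is positive for $p<q_n$ and vanishes for $p=q_n$ since $q_n$ is a root of $H_n$. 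This shows $h$ is a probability vector supported on $\SS_n^\circ$, giving item~\ref{item:5} once positivity on non-unit simple braids is known. For the latter, when $p<q_n$ I would use $h(x)=\nu_p(\CC_x)\ge\nu_p(\{x\})=H_n(p)p^{\abs{x}}>0$ (point~\ref{item:6} of Theorem~\ref{thr:1}); when $p=q_n$, Lemma~\ref{lem:3} identifies the restriction of $g$ (defined in~\eqref{eq:17}) to $\SS_n\setminus\{\unit,\Delta\}$ with the strictly positive Perron eigenvector of the primitive matrix $B$, while Lemma~\ref{lem:2} gives $h(x)=p^{\abs{x}}g(x)$, and $h(\Delta)=p^{\abs{\Delta}}>0$ directly (as $\Delta\cdot\Delta_X$ is simple only for $X=\emptyset$), so $h>0$ on $\SS_n\setminus\{\unit\}$. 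Stochasticity of $P$ is then immediate from Lemma~\ref{lem:2}: the row sums are $\sum_{x'}\un{x\to x'}p^{\abs{x}}h(x')/h(x)=p^{\abs{x}}g(x)/h(x)=1$, using $g(x)=\sum_{x'}\un{x\to x'}h(x')$ and $h(x)=p^{\abs{x}}g(x)$; for $p=q_n$ the term $x'=\unit$ contributes $0$ because $h(\unit)=0$, so the restriction of $P$ to $\SS_n^\circ$ remains stochastic. This settles item~\ref{item:9}.

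Next I would compute the left-hand side, namely $\nu_p$ of the cylinder $E=\{X_1=y_1,\ldots,X_k=y_k\}=\DD_{(y_1,\ldots,y_k)}$, splitting on whether the path ends at $\unit$. If $y_k\ne\unit$, then (since $\unit$ is absorbing in $(\SS_n,\to)$) all $y_j\ne\unit$ and $(y_1,\ldots,y_k)$ is the Garside decomposition of the finite braid $y=y_1\cdots y_k$ of height $k$, so $E=\CC_y$ and Proposition~\ref{cor:1} together with the factorization of Corollary~\ref{cor:cor} give $\nu_p(E)=h(y)=p^{\abs{y_1}+\cdots+\abs{y_{k-1}}}h(y_k)$. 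If $y_k=\unit$, then $E$ is the singleton $\{w\}$ with $w=y_1\cdots y_m$ and $m$ the last non-unit index; as $\abs{\unit}=0$ one has $\abs{y_1}+\cdots+\abs{y_{k-1}}=\abs{w}$, and Theorem~\ref{thr:1} gives $\nu_p(\{w\})=H_n(p)p^{\abs{w}}$ for $p<q_n$ and $\nu_p(\{w\})=0$ for $p=q_n$, both equal to $p^{\abs{y_1}+\cdots+\abs{y_{k-1}}}h(\unit)=p^{\abs{y_1}+\cdots+\abs{y_{k-1}}}h(y_k)$. The right-hand side is pure algebra: since $\un{y_j\to y_{j+1}}=1$ along the path and $h$ is nonzero at each $y_j\in\SS_n^\circ$, the factors telescope, $h(y_1)\prod_{j=1}^{k-1}p^{\abs{y_j}}h(y_{j+1})/h(y_j)=p^{\abs{y_1}+\cdots+\abs{y_{k-1}}}h(y_k)$. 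The two computations agree, proving the identity and hence the theorem.

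The only genuinely delicate points are the uniform treatment of the two regimes $p<q_n$ and $p=q_n$ — where the chain loses the absorbing state $\unit$ and where singletons become null, so the case $y_k=\unit$ must be matched against Theorem~\ref{thr:1} rather than against a Garside-cylinder value — and the strict positivity of $h$ on non-unit simple braids, needed both to divide by $h(y_j)$ in the telescoping and to define $P$. This positivity is the one place where the Perron--Frobenius input of Lemma~\ref{lem:3} is essential (for $p=q_n$); the case $p<q_n$ is handled more elementarily through the lower bound $\nu_p(\CC_x)\ge\nu_p(\{x\})>0$.
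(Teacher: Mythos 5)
Your proposal is correct and follows essentially the same route as the paper's own proof: strict positivity of $h$ on the relevant states (via Theorem~\ref{thr:1} for $p<q_n$, and via Lemmas~\ref{lem:2} and~\ref{lem:3} together with $h(\Delta)=p^{\abs\Delta}$ for $p=q_n$), followed by the verification that the finite-dimensional distributions factor as $p^{\abs{y_1}+\cdots+\abs{y_{k-1}}}h(y_k)$ using Proposition~\ref{cor:1}, Corollary~\ref{cor:cor} and the telescoping product. Your explicit handling of paths ending at $\unit$ (matched against the singleton values of Theorem~\ref{thr:1} rather than against a Garside cylinder) and your direct check that $P$ is stochastic and $h$ a probability vector merely make rigorous two points the paper treats implicitly, so this is a refinement of the same argument rather than a different one.
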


\begin{proof}
Let $f:\bbr n\to\bbR$ be defined by $f(x)=p^{\abs{x}}$\,.

We first show that $h>0$ on~$\SS_n$ if $p<q_n$\,, and that $h>0$ on
$\SS_n\setminus\{\unit\}$ if $p=q_n$\,. Obviously, it follows from
Proposition~\ref{cor:1} that $h$ is non-negative on~$\SS_n$\,, and even
on~$\bbr n$\,.
\begin{enumerate}
\item \emph{Case $p<q_n$.} Then $H_n(p)>0$ and therefore, according to
    Theorem~\ref{thr:1} and Proposition~\ref{cor:1}, we obtain:
\[h(x) = \nu_p(\CC_x) \geq \nu_p(\{x\}) = H_n(p) \cdot p^{\abs{x}} > 0 \text{ for all } x \in \SS_n,\]
which was to be shown.
\item \emph{Case $p=q_n$}\,.  Consider the matrix $B$ indexed by pairs
  $(x,x')$ of simple braids distinct from $\unit$ and from~$\Delta$\,,
  and defined by $B_{x,x'}=\un{x\to x'}q_n^{\abs{x'}}$\,. According to
  Lemma~\ref{lem:3}, the restriction of $g$ to
  $\SS_n\setminus\{\unit,\Delta\}$ is the Perron right eigenvector
  of~$B$, where $g$ has been defined in~(\ref{eq:17}). Therefore $g>0$
  on $\SS_n\setminus\{\unit,\Delta\}$. But $h(x)=q_n^{\abs{x}}g(x)$
  holds on $\SS_n$ according to Lemma~\ref{lem:2}, therefore $h>0$ on
  $\SS_n\setminus\{\unit,\Delta\}$.  As for~$\Delta$\,, one has
  $h(\Delta)=p^{\abs{\Delta}}>0$. Hence $h>0$ on
  $\SS_n\setminus\{\unit\}$, as claimed.
\end{enumerate}

It follows in particular from the above discussion that the matrix $P$
defined in the statement of the theorem is well defined. Now, let
$(x_1,\ldots,x_k)$ be any sequence of simple braids (including maybe the unit
braid). Let $\delta$ and $\delta'$ denote the following quantities:
\begin{align*}
  \delta&=\nu_p(X_1=x_1,\ldots,X_k=x_k)\\
  \delta'&=h(x_1)\cdot P_{x_1,x_2}\cdot\ldots\cdot P_{x_{k-1},x_k}\,.
\end{align*}
We prove that $\delta=\delta'$\,.

We observe first that both $\delta$ and $\delta'$ are zero if the sequence
$(x_1,\ldots,x_k)$ is not normal. Hence, without loss of generality, we
restrict our analysis to the case where $(x_1,\ldots,x_k)$ is a normal
sequence of simple braids.

Consider the braid $y=x_1\cdot\ldots\cdot x_k$\,.  By the uniqueness of the
Garside normal form, the following equality holds:
\[
  \{X_1=x_1,\ldots,X_k=x_k\}=\{X_1 \cdot\ldots\cdot X_k =y\}\,.
\]
Applying successively Proposition~\ref{cor:1} and Corollary~\ref{cor:cor}, we
have thus:
\[
  \delta=h(y)
  =p^{\abs{x_1}+\ldots+\abs{x_{k-1}}}h(x_k)\,.\]
On the other hand, we have:
\[  \delta'=h(x_1)\cdot p^{\abs{x_1}}\frac{h(x_2)}{h(x_1)}\cdot\ldots\cdot
  p^{\abs{x_{k-1}}}\frac{h(x_k)}{h(x_{k-1})}=p^{\abs{x_1}+\ldots+\abs{x_{k-1}}}h(x_k)
\]
which completes the proof of the equality $\delta=\delta'$. It follows that
$(X_k)_{k\geq1}$ is indeed a Markov chain with the specified initial
distribution and transition matrix.

If $p=q_n$\,, then we have already observed in the proof of
Lemma~\ref{lem:3} that $h(\unit)=0$. It implies both
$\nu(X_1=\unit)=0$ and $P_{x,\unit}=0$ for all
$x\in\SS_n\setminus\{\unit\}$. We conclude that $(X_k)_{k\geq1}$ does
never reach~$\unit$, which completes the proof of point~\ref{item:8},
and of the theorem.
\end{proof}

\paragraph*{Running examples for $n=3$.}

We characterize the uniform measure at infinity both for $\br 3$ and
for~$\dbr 3$\,. For this, we first determine the root of smallest modulus of
the Möbius polynomial, which we determined in
Subsection~\ref{sec:growth-series-braid}: $q_3=(\sqrt5-1)/2$ for $\br 3$ and
$q_3=1/2$ for~$\dbr 3$\,.

The Markov chain of simple braids induced by the uniform measure at infinity
takes its values in $\SS_3\setminus\{\unit\}$, which has $5$ elements for
$\br 3$ and $4$ elements for~$\dbr 3$\,.  Since the Möbius transform of
$q_3^{|x|}$ is tabulated in Table~\ref{tab:mobiustransform3}, we are in the
position to compute both the initial distribution and the transition matrix
of the chain by an application of Theorem~\ref{thr:2}, yielding the results
given in Table~\ref{tab:unfimre}.

\begin{table}
  \begin{gather*}
    \begin{array}{c}
      \Delta_3\\\sigma_1\\\sigma_2\\\sigma_1\cdot\sigma_2\\\sigma_2\cdot\sigma_1
    \end{array}
    \begin{pmatrix}
\sqrt5-2&\sqrt5-2&\sqrt5-2&(7-3\sqrt5)/2&(7-3\sqrt5)/2\\
0&(\sqrt5-1)/2&0&(3-\sqrt5)/2&0\\
0&0&(\sqrt5-1)/2&0&(3-\sqrt5)/2\\
0&0&(\sqrt5-1)/2&0&(3-\sqrt5)/2\\
0&(\sqrt5-1)/2&0&(3-\sqrt5)/2&0
    \end{pmatrix}
\\[1em]
    \begin{array}{c}
\delta_3\\\sigma_{1,2}\\\sigma_{2,3}\\\sigma_{1,3}
    \end{array}
    \begin{pmatrix}
1/4&1/4&1/4&1/4\\
0&1/2&0&1/2\\
0&1/2&1/2&0\\
0&0&1/2&1/2
    \end{pmatrix}
  \end{gather*}
  \caption{Transition matrix of the Markov
    chain on simple braids induced by the uniform measure at infinity
    for $\br 3$ (up) and for $\dbr 3$ (down). The initial distribution
    of the chain can be read on the first line of each matrix.}
  \label{tab:unfimre}
\end{table}

\section{Applications to finite uniform distributions}
\label{sec:appl-asympt-finite}

\subsection{Weak convergence of finite uniform distributions}
\label{sec:cons-furth-quest}

The following result states a relationship between the finite uniform
distributions, and the uniform measure at infinity. If one were only
interested in finite uniform distributions, that would be a justification for
studying uniform measures as defined previously.

\begin{theorem}
  \label{thr:9}
  The uniform measure at infinity $\nu_{q_n}$ is the weak limit of the
  sequence~$(\mu_{n,k})_{k\geq0}$ as $k\to\infty$\,, where $\mu_{n,k}$
  is for each integer $k\geq0$ the uniform distribution on the finite
  set $\bbr n(k)$ defined by:
\[
  \bbr n(k)=\{x\in\bbr n\tq\abs{x}=k\}\,.
\]
\end{theorem}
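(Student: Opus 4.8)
The plan is to reduce the weak convergence to a single explicit computation of the finite uniform masses of full visual cylinders, and then to upgrade this to genuine weak convergence using compactness together with the uniqueness part of Theorem~\ref{thr:1}. The guiding principle is that the measures $\mu_{n,k}$ are easy to evaluate on the divisibility-based sets $\Up x$, whereas they behave poorly on Garside cylinders.

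First I would compute $\mu_{n,k}(\Up x)$ for a fixed finite braid $x\in\bbr n$. Since $\mu_{n,k}$ is carried by the finite braids of length $k$, and since every finite braid is an isolated point of $\bbrbar n$ (each singleton $\{y\}$ is clopen, by the description of $\{x\}$ given right after Definition~\ref{def:2}), taking closures adds no new finite braid and one has $\Up x\cap\bbr n=\{y\in\bbr n\tq x\leql y\}$. Hence
\[
\mu_{n,k}(\Up x)=\frac{\#\{y\in\bbr n\tq\abs{y}=k,\ x\leql y\}}{\lambda_n(k)}.
\]
By left cancellativity of $\bbr n$ together with additivity of the length, the map $z\mapsto x\cdot z$ is a bijection from $\{z\in\bbr n\tq\abs{z}=k-\abs{x}\}$ onto $\{y\in\bbr n\tq\abs{y}=k,\ x\leql y\}$ (the latter being empty when $k<\abs{x}$). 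Therefore, for $k\geq\abs{x}$,
\[
\mu_{n,k}(\Up x)=\frac{\lambda_n(k-\abs{x})}{\lambda_n(k)}.
\]
Invoking the asymptotics $\lambda_n(m)\sim C_n q_n^{-m}$ from~\eqref{eq:12} in Corollary~\ref{cor:6}, the right-hand side converges to $q_n^{\abs{x}}$ as $k\to\infty$, so that $\lim_{k}\mu_{n,k}(\Up x)=q_n^{\abs{x}}$ for every $x\in\bbr n$.

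Next I would promote these pointwise limits to weak convergence. Since $\bbrbar n$ is a compact metric space, the set of probability measures on it is sequentially compact for the weak topology. Let $\nu$ be the weak limit of any weakly convergent subsequence $(\mu_{n,k_j})_j$. Each full visual cylinder $\Up x$ is clopen (Lemma~\ref{lem:full-visual-is-union-of-garside}), hence has empty topological boundary, so the Portmanteau theorem gives $\nu(\Up x)=\lim_j\mu_{n,k_j}(\Up x)=q_n^{\abs{x}}$ for all $x\in\bbr n$. Thus $\nu$ is a uniform measure of parameter $q_n$ in the sense of Definition~\ref{def:1}, and since $\nu$ and $\nu_{q_n}$ agree on the $\pi$-system of all full visual cylinders, Corollary~\ref{cor:3} (equivalently the uniqueness asserted in point~\ref{item:7} of Theorem~\ref{thr:1}) forces $\nu=\nu_{q_n}$. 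As every subsequence of $(\mu_{n,k})_k$ then admits a further subsequence converging weakly to the same limit $\nu_{q_n}$, the whole sequence converges weakly to $\nu_{q_n}$, which is the desired conclusion.

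As for the difficulty, no single step is deep. The crux is the clean identity $\mu_{n,k}(\Up x)=\lambda_n(k-\abs{x})/\lambda_n(k)$, which relies on evaluating the cylinder on the natural divisibility sets rather than on Garside cylinders, and on the sharp counting asymptotic of Corollary~\ref{cor:6} (itself the Perron--Frobenius input). The main point requiring care is that the convergence has only been established on the clopen collection $\{\Up x\}$; one must genuinely use compactness of the measure space plus the uniqueness statement to conclude weak convergence, rather than attempting to control $\mu_{n,k}$ directly against all continuous test functions.
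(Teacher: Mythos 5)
Your proof is correct and follows essentially the same route as the paper's: the same key identity $\mu_{n,k}(\Up x)=\lambda_n(k-\abs{x})/\lambda_n(k)$ obtained from the translation bijection $z\mapsto x\cdot z$, the same appeal to the asymptotics of Corollary~\ref{cor:6}, and the same Portmanteau--compactness--uniqueness argument via Theorem~\ref{thr:1} to upgrade convergence on full visual cylinders to weak convergence. The only differences are expository: you spell out why $\Up x\cap\bbr n=\{y\in\bbr n\tq x\leql y\}$ and make the subsequence extraction explicit, both of which the paper leaves implicit.
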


\begin{proof}
  Recall that $\bbr n(k)$, as a subset of $\bbr n$, is identified with
  its image in~$\bbrbar n$\,, and thus $\mu_{n,k}$ identifies with a
  discrete probability distribution on~$\bbrbar n$\,. We denote
  $\lambda_n(k)=\#\bbr n(k)$.  For a fixed braid $x\in\bbr n$\,, the
  map $y\mapsto x\cdot y$ is a bijection between $\bbr n$ and
  $\Up x\cap\bbr n$, a fact that we already used in the proof of
  Theorem~\ref{thr:1}, point~\ref{item:7}.  Hence, for any
  $k\geq |x|$, and using the asymptotics found in
  Corollary~\ref{cor:6}, one has:
\begin{align*}
  \mu_{n,k}(\Up x)&=\frac{\lambda_n(k-\abs{x})}{\lambda_n(k)}\to_{k\to\infty}q_n^{\abs{x}}\,.
\end{align*}

Invoking the Portemanteau theorem as in the proof of Theorem~\ref{thr:1}, we
deduce that any weak cluster value $\nu$ of $(\mu_{n,k})_{k\geq0}$ satisfies
$\nu(\Up x)=q_n^{\abs{x}}$ for any full visual cylinder~$\Up x$\,.
Theorem~\ref{thr:1} implies $\nu=\nu_{q_n}$\,. By compactness of~$\bbrbar
n$\,, it follows that $(\mu_{n,k})_{k\geq0}$ converges toward~$\nu_{q_n}$.
\end{proof}

A practical interest of Theorem~\ref{thr:9} lies in the following corollary.
Define $X_i:\bbrbar n\to\SS_n$ by $X_i(\xi)=x_i$\,, where $(x_k)_{k\geq1}$ is
the extended Garside normal form of~$\xi$.

\begin{corollary}
\label{cor:4} Let $j\geq1$ be an integer.  Then the joint distribution of the first
$j$ simple braids appearing in the extended Garside decomposition of a
uniformly random braid of size~$k$ converges, as $k\to\infty$, toward the
joint distribution of\/ $(X_1,\ldots,X_j)$ under the uniform measure at infinity.
\end{corollary}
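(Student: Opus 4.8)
The plan is to deduce the statement directly from the weak convergence $\mu_{n,k}\to\nu_{q_n}$ established in Theorem~\ref{thr:9}, by observing that the joint law of $(X_1,\dots,X_j)$ is a continuous functional of the generalized braid, and then applying the Portemanteau theorem exactly as in that proof.

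First I would record that each coordinate map $X_i:\bbrbar n\to\SS_n$ is continuous. Indeed, the value $X_i(\xi)$ depends only on the first $i$ vertices of the infinite path $\xi$, so $X_i$ is constant on each cylinder set $\DD_{(y_1,\dots,y_i)}$; since these cylinders are open and $\SS_n$ carries the discrete topology, $X_i$ is locally constant, hence continuous. Consequently the joint map $(X_1,\dots,X_j):\bbrbar n\to\SS_n^j$ is continuous, and its target $\SS_n^j$ is a finite discrete space.

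Next, convergence in distribution of $\SS_n^j$-valued random variables is the same as pointwise convergence of the probability mass functions, so it suffices to show, for each tuple $(a_1,\dots,a_j)\in\SS_n^j$, that $\mu_{n,k}(X_1=a_1,\dots,X_j=a_j)\to\nu_{q_n}(X_1=a_1,\dots,X_j=a_j)$ as $k\to\infty$. The event in question equals the cylinder $\DD_{(a_1,\dots,a_j)}$ (which is empty unless $(a_1,\dots,a_j)$ is a path in $(\SS_n,\to)$); in either case it is both open and closed in $\bbrbar n$, and therefore has empty topological boundary. The Portemanteau theorem then turns the weak convergence $\mu_{n,k}\to\nu_{q_n}$ into the desired convergence of these finitely many probabilities, and the joint distributions converge.

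The argument is essentially immediate once the clopenness of the coordinate cylinders is in hand, and the only point requiring a little care is the continuity of the $X_i$, which reduces to the fact already noted in the excerpt that cylinder sets are open and closed. Equivalently, one may phrase the whole thing as an instance of the continuous mapping theorem: push the weakly convergent sequence $\mu_{n,k}$ forward under the continuous map $(X_1,\dots,X_j)$, and observe that weak convergence of measures on a finite discrete space is exactly convergence of the mass function. I expect no genuine obstacle beyond this bookkeeping.
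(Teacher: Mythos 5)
Your proof is correct and is essentially the paper's own argument: the paper likewise observes that, by definition of the topology on $\bbrbar n$, the map $\xi\mapsto(X_1,\ldots,X_j)$ is continuous, and then deduces the result from the weak convergence of $(\mu_{n,k})_k$ established in Theorem~\ref{thr:9}. Your extra details (clopenness of the cylinders, Portemanteau, and the identification of weak convergence on the finite discrete space $\SS_n^j$ with convergence of mass functions) merely spell out the continuous-mapping step that the paper leaves implicit.
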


\begin{proof}
By definition of the topology on $\bbrbar n$, the mapping $\xi\in\bbrbar
n\mapsto (X_1,\ldots,X_j)$ is continuous for each integer $j\geq1$. The
result follows thus from Theorem~\ref{thr:9}.
\end{proof}

\paragraph*{Example for $n=4$.}
\label{sec:running-example-n=3}

In anticipation of the computations to be performed in
Section~\ref{se-explicit}, we depict in Figure~\ref{fig:ranazdopsq} the
beginning of a ``truly random infinite braid'' on $n=4$ strands, up to height
$k=10$. These are ``typical 10 first elements'' in the decomposition of a
large random braid on four strands. Observe the absence of~$\Delta$; the
numerical values found in next subsection make it quite likely.

\begin{figure}
\centering  \begin{tikzpicture}
\draw[thick,green] (0,0) -- (2.5,0);	
\draw[thick,green] (2.5,0) -- (3,0.5);
\draw[thick,green] (3,0.5) -- (3.25,0.5);
\draw[thick,green] (3.25,0.5) -- (3.45,0.3);
\draw[thick,green] (3.55,.2) -- (3.75,0);
\draw[thick,green] (3.75,0) -- (7.75,0);
\draw[thick,green] (7.75,0) -- (8.25,0.5);
\draw[thick,green] (8.25,.5) -- (10,.5);
\draw[thick,green] (10,.5) -- (10.5,1);
\draw[thick,green] (10.5,1) -- (10.75,1);
\draw[thick,green] (10.75,1) -- (10.95,.8);
\draw[thick,green] (11.05,.7) -- (11.25,.5);
\draw[thick,green] (11.25,.5) -- (11.5,.5);
\draw[thick,green] (11.5,.5) -- (11.7,.3);
\draw[thick,green] (11.8,.2) -- (12,0);
\draw[thick,green] (12,0) -- (12.25,0);
\draw[thick,green,dashed] (12,0) -- (15,0);
\draw[thick,red] (0,.5) -- (1.75,.5);	
\draw[thick,red] (1.75,.5) -- (2.25,1);
\draw[thick,red] (2.25,1) -- (4,1);
\draw[thick,red] (4,1) -- (4.2,.8);
\draw[thick,red] (4.3,.7) -- (4.5,.5);
\draw[thick,red] (4.5,.5) -- (4.75,.5);
\draw[thick,red] (4.75,.5) -- (5.25,1);
\draw[thick,red] (5.25,1) -- (5.5,1);
\draw[thick,red] (5.5,1) -- (5.7,.8);
\draw[thick,red] (5.8,.7) -- (6,.5);
\draw[thick,red] (6,.5) -- (6.25,.5);
\draw[thick,red] (6.25,.5) -- (6.75,1);
\draw[thick,red] (6.75,1) -- (7,1);
\draw[thick,red] (7,1) -- (7.2,.8);
\draw[thick,red] (7.3,.7) -- (7.5,.5);
\draw[thick,red] (7.5,.5) -- (7.75,.5);
\draw[thick,red] (7.75,.5) -- (7.95,.3);
\draw[thick,red] (8.05,.2) -- (8.25,0);
\draw[thick,red] (8.25,0) -- (11.5,0);
\draw[thick,red] (11.5,0) -- (12,0.5);
\draw[thick,red] (12,.5) -- (13.75,.5);
\draw[thick,red] (13.75,.5) -- (14.25,1);
\draw[thick,red] (14.25,1) -- (14.5,1);
\draw[thick,red,dashed] (14.5,1) -- (15,1);
\draw[thick,blue] (0,1) -- (0.25,1);	
\draw[thick,blue] (0.25,1) -- (0.75,1.5);
\draw[thick,blue] (0.75,1.5) -- (1,1.5);
\draw[thick,blue] (1,1.5) -- (1.2,1.3);
\draw[thick,blue] (1.3,1.2) -- (1.5,1);
\draw[thick,blue] (1.5,1) -- (1.75,1);
\draw[thick,blue] (1.75,1) -- (1.95,.8);
\draw[thick,blue] (2.05,.7) -- (2.25,.5);
\draw[thick,blue] (2.25,.5) -- (2.5,.5);
\draw[thick,blue] (2.5,.5) -- (2.7,.3);
\draw[thick,blue] (2.8,.2) -- (3,0);
\draw[thick,blue] (3,0) -- (3.25,0);
\draw[thick,blue] (3.25,0) -- (3.75,0.5);
\draw[thick,blue] (3.75,.5) -- (4,.5);
\draw[thick,blue] (4,.5) -- (4.5,1);
\draw[thick,blue] (4.5,1) -- (4.75,1);
\draw[thick,blue] (4.75,1) -- (4.95,.8);
\draw[thick,blue] (5.05,.7) -- (5.25,.5);
\draw[thick,blue] (5.25,.5) -- (5.5,.5);
\draw[thick,blue] (5.5,.5) -- (6,1);
\draw[thick,blue] (6,1) -- (6.25,1);
\draw[thick,blue] (6.25,1) -- (6.45,.8);
\draw[thick,blue] (6.55,.7) -- (6.75,.5);
\draw[thick,blue] (6.75,.5) -- (7,.5);
\draw[thick,blue] (7,.5) -- (7.5,1);
\draw[thick,blue] (7.5,1) -- (8.5,1);
\draw[thick,blue] (8.5,1) -- (9,1.5);
\draw[thick,blue] (9,1.5) -- (9.25,1.5);
\draw[thick,blue] (9.25,1.5) -- (9.45,1.3);
\draw[thick,blue] (9.55,1.2) -- (9.75,1);
\draw[thick,blue] (9.75,1) -- (10,1);
\draw[thick,blue] (10,1) -- (10.2,.8);
\draw[thick,blue] (10.3,.7) -- (10.5,.5);
\draw[thick,blue] (10.5,.5) -- (10.75,.5);
\draw[thick,blue] (10.75,.5) -- (11.25,1);
\draw[thick,blue] (11.25,1) -- (12.25,1);
\draw[thick,blue] (12.25,1) -- (12.75,1.5);
\draw[thick,blue] (12.75,1.5) -- (13,1.5);
\draw[thick,blue] (13,1.5) -- (13.2,1.3);
\draw[thick,blue] (13.3,1.2) -- (13.5,1);
\draw[thick,blue] (13.5,1) -- (13.75,1);
\draw[thick,blue] (13.75,1) -- (13.95,.8);
\draw[thick,blue] (14.05,.7) -- (14.25,.5);
\draw[thick,blue] (14.25,.5) -- (14.5,.5);
\draw[thick,blue,dashed] (14.5,.5) -- (15,.5);
\draw[thick,orange] (0,1.5) -- (0.25,1.5);	
\draw[thick,orange] (0.25,1.5) -- (0.45,1.3);
\draw[thick,orange] (0.55,1.2) -- (0.75,1);
\draw[thick,orange] (0.75,1) -- (1,1);
\draw[thick,orange] (1,1) -- (1.5,1.5);
\draw[thick,orange] (1.5,1.5) -- (8.5,1.5);
\draw[thick,orange] (8.5,1.5) -- (8.7,1.3);
\draw[thick,orange] (8.8,1.2) -- (9,1);
\draw[thick,orange] (9,1) -- (9.25,1);
\draw[thick,orange] (9.25,1) -- (9.75,1.5);
\draw[thick,orange] (9.75,1.5) -- (12.25,1.5);
\draw[thick,orange] (12.25,1.5) -- (12.45,1.3);
\draw[thick,orange] (12.55,1.2) -- (12.75,1);
\draw[thick,orange] (12.75,1) -- (13,1);
\draw[thick,orange] (13,1) -- (13.5,1.5);
\draw[thick,orange] (13.5,1.5) -- (13.75,1.5);
\draw[thick,orange,dashed] (13.75,1.5) -- (15,1.5);
\node at (-.5,0){$1$};
\node at (-.5,.5){$2$};
\node at (-.5,1){$3$};
\node at (-.5,1.5){$4$};
\node at (0.5,-.5){$\sigma_3$};
\node at (1.25,-.5){$\sigma_3$};
\node at (2,-.5){$\sigma_2$};
\node at (2.75,-.5){$\sigma_1$};
\node at (3.5,-.5){$\sigma_1$};
\node at (4.25,-.5){$\sigma_2$};
\node at (5.0,-.5){$\sigma_2$};
\node at (5.75,-.5){$\sigma_2$};
\node at (6.5,-.5){$\sigma_2$};
\node at (7.25,-.5){$\sigma_2$};
\node at (8.0,-.5){$\sigma_1$};
\node at (8.75,-.5){$\sigma_3$};
\node at (9.5,-.5){$\sigma_3$};
\node at (10.25,-.5){$\sigma_2$};
\node at (11.0,-.5){$\sigma_2$};
\node at (11.75,-.5){$\sigma_1$};
\node at (12.5,-.5){$\sigma_3$};
\node at (13.25,-.5){$\sigma_3$};
\node at (14.0,-.5){$\sigma_2$};
\end{tikzpicture}
\caption{A random uniform braid on four strands. The first elements of
  its Garside decomposition are:
  $(\sigma_3) \cdot(\sigma_3\sigma_2\sigma_1) \cdot(\sigma_1\sigma_2)
  \cdot(\sigma_2) \cdot(\sigma_2) \cdot(\sigma_2)
  \cdot(\sigma_2\sigma_1\sigma_3) \cdot(\sigma_3\sigma_2)
  \cdot(\sigma_2\sigma_1\sigma_3) \cdot(\sigma_3\sigma_2) $}
  \label{fig:ranazdopsq}
\end{figure}
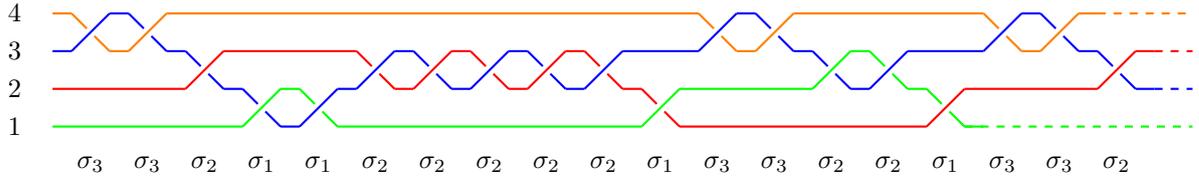

\subsection{A geometric number of $\Delta$s.}
\label{sec:geom-numb-delt}

The $\Delta$ element only appears at the beginning of normal sequences
of simple braids. Accordingly, under the uniform probability
measure~$\nu_p$\,, the occurrences of $\Delta$ in the Markov chain
$(X_k)_{k\geq1}$ are only observed in the first indices, if any, and
their number is geometrically distributed.

This behavior is quite easy to quantify, as the probabilistic parameters
associated with $\Delta$ have simple expressions:
\begin{align*}
  \nu_p(X_1=\Delta)&=h(\Delta)=p^{\abs{\Delta}}=
  \begin{cases}
   p^{\frac{n(n-1)}2} & \text{ if } \bbr n = \br n \\
   p^{n-1} & \text{ if } \bbr n = \dbr n
  \end{cases}
\,,&
P_{\Delta,\Delta}&=p^{\abs{\Delta}}\,.
\end{align*}

It follows that the number of $\Delta$s appearing in the normal form of a
random braid, possibly infinite and distributed according to a uniform measure of
parameter $p\in(0,q_n]$\,, is geometric of parameter~$p^{\abs{\Delta}}$\,.

As a consequence of Theorem~\ref{thr:9}, we obtain this corollary.

\begin{corollary}
  \label{cor:2}
  Let $T_k:\bbr n(k)\to\bbN$ denote the number of $\Delta$s in the
  Garside decomposition of a random braid of size~$k$. Then
  $(T_k)_{k\geq0}$ converges in distribution, as $k\to\infty$\,,
  toward a geometric distribution of parameter~$q_n^{\frac{n(n-1)}2}$ if $\bbr n = \br n$,
  or~$q_n^{n-1}$ if $\bbr n = \dbr n$.
\end{corollary}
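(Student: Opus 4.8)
The plan is to reduce the statement to the finite-dimensional convergence already established, exploiting the crucial structural fact that occurrences of $\Delta$ form an \emph{initial run} in any Garside normal form. Since $\Delta\to x$ holds for every $x\in\SS_n$ while $x\to\Delta$ holds only for $x=\Delta$, once a normal sequence leaves $\Delta$ it can never return to it. Hence, writing $X_1,X_2,\ldots$ for the simple braids of the extended Garside decomposition, the number of $\Delta$s is $T=\max\{m\geq0\tq X_1=\cdots=X_m=\Delta\}$, so that for each fixed $m\geq1$ the event $\{T\geq m\}$ coincides \emph{exactly} with the finite-dimensional cylinder $\{X_1=\Delta,\ldots,X_m=\Delta\}$ (and $\{T\geq0\}$ is the whole space).

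Next I would transport this cylinder through the weak limit. By Corollary~\ref{cor:4} applied with $j=m$, the joint law of $(X_1,\ldots,X_m)$ under $\mu_{n,k}$ converges, as $k\to\infty$, to its law under the uniform measure at infinity $\nu_{q_n}$. As $(X_1,\ldots,X_m)$ takes values in the finite set $\SS_n^m$, this convergence in distribution is simply convergence of the weight of each atom; in particular
\[
\mu_{n,k}(T_k\geq m)=\mu_{n,k}(X_1=\Delta,\ldots,X_m=\Delta)\xrightarrow[k\to\infty]{}\nu_{q_n}(X_1=\Delta,\ldots,X_m=\Delta).
\]

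It then remains to identify the right-hand side. By Theorem~\ref{thr:2}, under $\nu_{q_n}$ the process $(X_k)_{k\geq1}$ is a Markov chain with initial weight $h(\Delta)=q_n^{\abs{\Delta}}$ and self-transition $P_{\Delta,\Delta}=q_n^{\abs{\Delta}}$, whence
\[
\nu_{q_n}(X_1=\Delta,\ldots,X_m=\Delta)=h(\Delta)\,P_{\Delta,\Delta}^{\,m-1}=\bigl(q_n^{\abs{\Delta}}\bigr)^{m},
\]
which is precisely the tail of a geometric distribution of parameter $q_n^{\abs{\Delta}}$ on $\bbN$. Thus $\mu_{n,k}(T_k\geq m)\to\bigl(q_n^{\abs{\Delta}}\bigr)^m$ for every $m\geq0$. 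Since $T_k$ is $\bbN$-valued, convergence of all tail probabilities forces convergence of the individual masses $\mu_{n,k}(T_k=m)=\mu_{n,k}(T_k\geq m)-\mu_{n,k}(T_k\geq m+1)$ toward $\bigl(q_n^{\abs{\Delta}}\bigr)^m\bigl(1-q_n^{\abs{\Delta}}\bigr)$, so $T_k$ converges in distribution to the geometric law of parameter $q_n^{\abs{\Delta}}$. I would conclude by substituting $\abs{\Delta}=n(n-1)/2$ when $\bbr n=\br n$ and $\abs{\Delta}=n-1$ when $\bbr n=\dbr n$, recovering the two announced parameters.

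As for difficulty, there is no genuine analytic obstacle: the substance has already been packaged into Theorem~\ref{thr:9}, Corollary~\ref{cor:4} and Theorem~\ref{thr:2}. The one point deserving care is the reduction to a finite-dimensional cylinder, namely the observation that $\{T\geq m\}$ depends only on $(X_1,\ldots,X_m)$; this is exactly what makes Corollary~\ref{cor:4} applicable, and it rests entirely on the ``initial'' character of $\Delta$ in the graph $(\SS_n,\to)$. Everything after that is bookkeeping with the Markov chain, together with the elementary fact that convergence of tail probabilities characterizes convergence in law for integer-valued random variables.
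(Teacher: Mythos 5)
Your proof is correct and follows essentially the same route as the paper: the paper likewise combines the Markov-chain description of Theorem~\ref{thr:2} (initial weight $h(\Delta)=q_n^{\abs{\Delta}}$ and self-transition $P_{\Delta,\Delta}=q_n^{\abs{\Delta}}$, with $\Delta$ ``initial'' in $(\SS_n,\to)$) with the weak convergence of Theorem~\ref{thr:9}. Your write-up simply makes explicit the finite-dimensional cylinder reduction and the tail-probability bookkeeping that the paper leaves implicit.
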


Authors are sometimes only interested by the elements of the Garside
decomposition of a ``large'' braid that appear \emph{after} the last
occurrence of~$\Delta$. The notion of uniform measure at infinity allows also
to derive information on these, as we shall see next.

\paragraph{Examples for $n=3$ and $n=4$.}

Exact or numerical values for the parameter of the geometrical distribution are easily
computed for $n=3$ and for $n=4$, based on our previous computations for
$n=3$ and on the computations of Section~\ref{se-explicit} for $n=4$; see
Table~\ref{tab:pjkaozdja}.

\begin{table}
\begin{gather*}
  \begin{array}{l|cc|cc|}
\multicolumn{1}{l}{}&\multicolumn{2}{c}{\text{monoid $\br
  n$}}&\multicolumn{2}{c}{\text{monoid $\dbr n$}}\\
&\text{parameter}&\text{prob. of occ.~of $\Delta$}
&\text{parameter}&\text{prob. of occ.~of $\Delta$}\\
\cline{2-5}
n=3&\Bigl(-\frac12+\frac{\sqrt5}2\Bigr)^3\approx0.236&\approx.309&
\frac14&\frac13\\
n=4&\approx0.0121 &\approx 0.0122&\Bigl(\frac12-\frac{\sqrt5}{10}\Bigr)^3\approx0.021&
\approx0.022
  \end{array}
\end{gather*}
\caption{For each monoid, we tabulate on the left hand side the
  parameter of the geometric distribution of the number of~$\Delta$s appearing
  in a random infinite braid. On the right hand side, we tabulate the
  associated probability of occurrence of at least one~$\Delta$.}
\label{tab:pjkaozdja}
\end{table}

\subsection{On a conjecture by Gebhardt and Tawn}
\label{sec:gebhardt--conjecture}

This subsection only deals with positive braids monoids~$\br n$\,.  In their
\emph{Stable region Conjecture}, the authors of~\cite{gebhardt14} suggest the
following, based on a thorough experimental analysis. For each integer
$i\geq1$, let $\lambda_{i*}(\mu_k)$ be the distribution of
the~$i^{\text{th}}$~factor in the extended Garside normal that occurs after
the last~$\Delta$\,, for braids drawn at random uniformly among braids of
length~$k$. Then two facts are suspected to hold, according
to~\cite[Conjecture~3.1]{gebhardt14}:
\begin{description}
\item[\normalfont\itshape Presumed Fact 1.] For each integer $i\geq1$, the
    sequence $(\lambda_{i*}(\mu_k))_{k\geq1}$ is convergent as
    $k\to\infty$\,.
\item[\normalfont\itshape Presumed Fact 2.] There exists a probability
    measure, say $\mu$ on~$\SS_n\setminus\{\unit,\Delta\}$\,, and a
    constant $C>0$ such that holds:
  \begin{gather}
    \label{eq:6}
\forall i>C\quad \lambda_{i*}(\mu_k)\to\mu\quad\text{as $k\to\infty$\,.}
  \end{gather}
\end{description}

Theorems~\ref{thr:9} and~\ref{thr:2} translate the problem of the limiting
behavior of factors of the normal form within the familiar field of Markov
chains with a finite number of states. This brings a simple way of
determining the status of the above conjecture.

It follows from Theorem~\ref{thr:9} that the distribution of the
$k^{\text{th}}$ element in the extended Garside decomposition of a
random braid (including all the starting~$\Delta$s), distributed
uniformly among finite braids of size~$k$, converges toward the
distribution of the $k^{\text{th}}$ element in the extended Garside
decomposition of an \emph{infinite} braid, distributed according to
the unique uniform measure at infinity. And, according to
Theorem~\ref{thr:2}, this is the distribution of a Markov chain at
time~$k$, with the prescribed initial distribution and transition
matrix.

As for $\lambda_{i*}(\mu_k)$ it converges thus toward the distribution of the same
chain, $i$~steps after having left the state~$\Delta$\,. Hence we can affirm
\emph{the veracity of Fact~1}. Using the notations of Theorem~\ref{thr:2}, we
may also describe the limit, say
$\lambda_{i*}=\lim_{k\to\infty}\lambda_{i*}(\mu_k)$\,, as follows:
\[
\forall s\in\SS_n\setminus\{\unit,\Delta\}\quad
\lambda_{1*}(s)=\frac{h(s)}{1-{q_n}^{\frac{n(n-1)}2}}\,,
\]
where the denominator comes from the conditioning on $s\neq\Delta$\,. The
next values for $\lambda_{i*}$ are obtained recursively:
\begin{gather}
  \label{eq:25}
  \forall i\geq 1\quad \lambda_{i*}=\lambda_{1*}P^{i-1}\,,
\end{gather}
where $P$ is the transition matrix of the chain described in
Theorem~\ref{thr:2}.

On the contrary, \emph{Fact~2 is incorrect.} Indeed, keeping the
notation $\lambda_{i*}=\lim_{k\to\infty}\lambda_{i*}(\mu_k)$, if~\eqref{eq:6}
was true, then $\mu=\lambda_{i*}$ for $i$
large enough, would be the invariant measure of the Markov chain
according to~\eqref{eq:25}. But that would imply that the chain is
stationary. We prove below that this is not the case when $n> 3$. What
is true however, is that $(\lambda_{i*})_{i\geq1}$ converges toward
the stationary measure of the chain when $i\to\infty$\,.

Assume, seeking a contradiction, that the chain $(X_k)_{k\geq1}$ is
stationary. That would imply that the Möbius transform $h$ of the function
$f(x)=q_n^{\abs{x}}$\,, identified with a vector indexed by
$\SS_n\setminus\{\unit,\Delta\}$\,, is left invariant for the transition
matrix~$P$. Hence, for $y\in\SS_n\setminus\{\unit,\Delta\}$:
\[
  h(y)=(hP)_y=\sum_{x\in\SS_n\setminus\{\unit,\Delta\}\tq x\to y}h(x)\frac{f(x)h(y)}{h(x)}\,.
\]
We deduce, since $h>0$ on $\SS_n\setminus\{\unit\}$\,:
\begin{gather}
\label{eq:32}
  \forall y\in\SS_n\setminus\{\unit,\Delta\}\quad
\sum_{x\in\SS_n\setminus\{\unit,\Delta\}\tq x\to y}q_n^{\abs{x}}=1\,.
\end{gather}

Consider $y=\sigma_1$ and $y'=\sigma_1\cdot\sigma_2\cdot\sigma_1$\,. One has:
\[
  \Dl(y)=\{\sigma_1\}\subsetneq\Dl(y')=\{\sigma_1,\sigma_2\}\,,
\]
which entails:
\[
  \{x\in\SS_n\tq (x\neq\unit,\,\Delta)\wedge x\to y\}\supsetneq
    \{x\in\SS_n\tq (x\neq\unit,\,\Delta)\wedge x\to y'\}\,.
\]
It follows that the equality stated in~\eqref{eq:32} cannot hold both for $y$
and for~$y'$, which is the sought contradiction.

\section{Explicit computations}\label{se-explicit}

We gather in this section the computations needed, for $n=4$, to characterize
the uniform measure at infinity, both for $\br 4$ and for~$\dbr 4$.

\subsection{Computations for $\br 4$}
\label{sec:computations-br-4}

The monoid $\br 4$ has the following presentation:
\begin{gather*}
\br 4=\bigl\langle\sigma_1,\; \sigma_2,\;\sigma_3
\ \big|\
\sigma_1\sigma_3=\sigma_3\sigma_1,\
\sigma_1\sigma_2\sigma_1=\sigma_2\sigma_1\sigma_1,\
\sigma_2\sigma_3\sigma_2=\sigma_3\sigma_2\sigma_3\bigr\rangle^+\,.
\end{gather*}
In order to shorten notations, we denote a product of generators $\sigma_i$
simply by the corresponding sequence of indices. So for instance, the Garside
element is denoted: $\Delta_4=123121$\,.

The lattice $\DD_4=\{\Delta_X\ |\ X\subseteq\Sigma_4\}$ has $2^3=8$ elements,
and is isomorphic to the lattice of subsets of~$\{1,2,3\}$, whereas $\SS_4$
has $4!=24$ elements. The Hasse diagram of $\SS_4$ is depicted in
Figure~\ref{fig:hessebr4}.

\begin{figure}
\centering
\begin{tikzpicture}
\node at (0,0) {$\unit$};

\node at (-2.5,1) {$\sigma_1$}; \node at (0,1) {$\sigma_2$}; \node at (2.5,1)
{$\sigma_3$};

\node at (-5,2) {$\sigma_1 \CDOT \sigma_2$}; \node at (-2.5,2) {$\sigma_2
\CDOT \sigma_1$}; \node at (0,2) {$\sigma_1 \CDOT \sigma_3$}; \node at
(2.5,2) {$\sigma_2 \CDOT \sigma_3$}; \node at (5,2) {$\sigma_3 \CDOT
\sigma_2$};

\node at (-6.25,3) {$\sigma_1 \CDOT \sigma_2 \CDOT \sigma_3$}; \node at
(-3.75,3) {$\sigma_1 \CDOT \sigma_2 \CDOT \sigma_1$}; \node at (-1.25,3)
{$\sigma_1 \CDOT \sigma_3 \CDOT \sigma_2$}; \node at (1.25,3) {$\sigma_2
\CDOT \sigma_1 \CDOT \sigma_3$}; \node at (3.75,3) {$\sigma_2 \CDOT \sigma_3
\CDOT \sigma_2$}; \node at (6.25,3) {$\sigma_3 \CDOT \sigma_2 \CDOT
\sigma_1$};

\node at (-5,4) {$\sigma_1 \CDOT \sigma_2 \CDOT \sigma_1 \CDOT \sigma_3$};
\node at (-2.5,4) {$\sigma_1 \CDOT \sigma_2 \CDOT \sigma_3 \CDOT \sigma_2$};
\node at (0,4) {$\sigma_2 \CDOT \sigma_1 \CDOT \sigma_3 \CDOT \sigma_2$};
\node at (2.5,4) {$\sigma_1 \CDOT \sigma_3 \CDOT \sigma_2 \CDOT \sigma_1$};
\node at (5,4) {$\sigma_2 \CDOT \sigma_3 \CDOT \sigma_2 \CDOT \sigma_1$};

\node at (-2.5,5) {$\sigma_1 \CDOT \sigma_2 \CDOT \sigma_1 \CDOT \sigma_3
\CDOT \sigma_2$}; \node at (0,5) {$\sigma_1 \CDOT \sigma_2 \CDOT \sigma_3
\CDOT \sigma_2 \CDOT \sigma_1$}; \node at (2.5,5) {$\sigma_2 \CDOT \sigma_1
\CDOT \sigma_3 \CDOT \sigma_2 \CDOT \sigma_1$};

\node at (0,6) {$\Delta_4$};

\draw (0.25,0.1) -- (2.25,0.9); \draw (0,0.25) -- (0,0.8); \draw (-0.25,0.1)
-- (-2.25,0.9);

\draw (2.75,1.1) -- (4.5,1.8); \draw (2.25,1.1) -- (0.5,1.8); \draw
(0.25,1.1) -- (2,1.8); \draw (-0.25,1.1) -- (-2,1.8); \draw (-2.75,1.1) --
(-4.5,1.8); \draw (-2.25,1.1) -- (-0.5,1.8);

\draw (5.25,2.2) -- (6,2.8); \draw (4.75,2.2) -- (4,2.8); \draw (2.75,2.2) --
(3.5,2.8); \draw (2.25,2.2) -- (1.5,2.8); \draw (-0.25,2.2) -- (-1,2.8);
\draw (-2.75,2.2) -- (-3.5,2.8); \draw (-2.05,2.12) -- (0.6875,2.85); \draw
(-5.25,2.2) -- (-6,2.8); \draw (-4.75,2.2) -- (-4,2.8);

\draw (6,3.2) -- (5.25,3.8); \draw (5.6875,3.15) -- (3.25,3.8); \draw (4,3.2)
-- (4.75,3.8); \draw (1,3.2) -- (0.25,3.8); \draw (-0.6875,3.15) --
(1.75,3.8); \draw (-1.5,3.2) -- (-2.25,3.8); \draw (-4,3.2) -- (-4.75,3.8);
\draw (-6,3.2) -- (-5.25,3.8); \draw (-5.6875,3.15) -- (-3.25,3.8);

\draw (4.25,4.2) -- (2.75,4.8); \draw (1.75,4.2) -- (0.25,4.8); \draw
(0.25,4.2) -- (1.75,4.8); \draw (-0.25,4.2) -- (-1.75,4.8); \draw (-1.75,4.2)
-- (-0.25,4.8); \draw (-4.25,4.2) -- (-2.75,4.8);

\draw (1.75,5.2) -- (0.25,5.8); \draw (0,5.2) -- (0,5.8); \draw (-1.75,5.2)
-- (-0.25,5.8);

\draw (0,0) circle (0.25); \draw (-2.45,1.2) arc (90:-90:0.2) -- (-2.55,0.8)
arc (270:90:0.2) -- cycle; \draw (0.05,1.2) arc (90:-90:0.2) -- (-0.05,0.8)
arc (270:90:0.2) -- cycle; \draw (2.55,1.2) arc (90:-90:0.2) -- (2.45,0.8)
arc (270:90:0.2) -- cycle; \draw (0.45,2.2) arc (90:-90:0.2) -- (-0.45,1.8)
arc (270:90:0.2) -- cycle; \draw (-3.2,3.2) arc (90:-90:0.2) -- (-4.3,2.8)
arc (270:90:0.2) -- cycle; \draw (4.3,3.2) arc (90:-90:0.2) -- (3.2,2.8) arc
(270:90:0.2) -- cycle; \draw (0.2,6.2) arc (90:-90:0.2) -- (-0.2,5.8) arc
(270:90:0.2) -- cycle;
\end{tikzpicture}
\caption{Hasse diagram of $\SS_4$ for $\br 4$\,. Elements of $\DD_4$ are
circled} \label{fig:hessebr4}
\end{figure}

In order to compute the Möbius transform $h$ of the function $f(x)=p^{|x|}$
defined on~$\SS_4$, we refer to the expression~(\ref{eq:3}):
\begin{gather*}
  h(x)=\sum_{X\subseteq\Sigma\tq x\cdot\Delta_X\in\SS_4}(-1)^{|X|}f(x\cdot\Delta_X)
\end{gather*}

Furthermore, recalling the property $x\cdot\Delta_X\in\SS_4\iff
X\subseteq\Dl(\Delta_{\Sigma\setminus\Dr(x)})$ proved earlier
in~(\ref{eq:24}), the range of those $X\subseteq\Sigma$ such that
$x\cdot\Delta_X\in\SS_4$ is directly derived from the knowledge of the sets
$\Dl(y)$ and~$\Dr(y)$. All these elements are gathered in
Table~\ref{tab:mobiusB4}.

\begin{table}
  \[
  \begin{array}{rclll}
    \Dl(x)&x\in\SS_4&\Dr(x)&\{y\in\SS_4\tq x\to y\}&h(x)\\
\hline
\rule{0em}{1em}\emptyset&\fbox{$\unit$}&\emptyset&\unit&1-3p+p^2+2p^3-p^6\\
1&\fbox1&1&1,12,123&p-2p^2+p^4\\
2&\fbox2&2&2,21,23,213,2132&p-2p^2+p^3\\
3&\fbox3&3&3,32,321&p-2p^2+p^4\\
1&12&2&2,21,23,213,2132&p^2-2p^3+p^4\\
3,1&\fbox{13}&1,3&1,3,12,13,32,123,132,321,1232,1321,12321&p^2-p^3\\
2&21&1&1,12,123&p^2-2p^3+p^5\\
2&23&3&3,32,321&p^2-2p^3+p^5\\
3&32&2&2,21,23,213,2132&p^2-2p^3+p^4\\
2,1&\fbox{121}&1,2&1,2,12,21,23,121,123,213,1213,2132,21323&p^3-p^4\\
1&123&3&3,32,321&p^3-2p^4+p^6\\
3,1&132&2&2,21,23,213,2132&p^3-2p^4+p^5\\
2&213&1,3&1,3,12,13,32,123,132,321,1232,1321,12321&p^3-p^4\\
3,2&\fbox{232}&2,3&2,3,21,23,32,213,232,321,2132,2321,21321&p^3-p^4\\
3&321&1&1,12,123&p^3-2p^4+p^6\\
2,1&1213&1,3&1,3,12,13,32,123,132,321,1232,1321,12321&p^4-p^5\\
3,1&1232&2,3&2,3,21,23,32,213,232,321,2132,2321,21321&p^4-p^5\\
3,1&1321&1,2&1,2,12,21,23,121,123,213,1213,2132,21323&p^4-p^5\\
2&2132&2&2,21,23,213,2132&p^4-2p^5+p^6\\
3,2&2321&1,3&1,3,12,13,32,123,132,321,1232,1321,12321&p^4-p^5\\
3,1&12321&1,3&1,3,12,13,32,123,132,321,1232,1321,12321&p^5-p^6\\
3,2&21321&1,2&1,2,12,21,23,121,123,213,1213,2132,21323&p^5-p^6\\
2,1&21323&2,3&2,3,21,23,32,213,232,321,2132,2321,21321&p^5-p^6\\
3,2,1&\fbox{$\Delta_4$}&1,2,3&\SS_4&p^6
  \end{array}
\]
\caption{Characteristic elements for~$\br 4$\,. Elements of $\DD_4$
  are framed. The column $h(x)$
  tabulates the Möbius transform of the function $f(x)=p^{|x|}$ on~$\SS_4$\,.}
  \label{tab:mobiusB4}
\end{table}

The Möbius polynomial $H_4(t)$ can be obtained, for instance, by evaluating
on $\unit$ the Möbius transform of the function $x\mapsto t^{\abs{x}}$\,.
From the first line of Table~\ref{tab:mobiusB4}, we read:
\[
  H_4(t)=(1-t)(1-2t-t^2+t^3+t^4+t^5)
\]

Let $p=q_4$ be the smallest root of~$H_4(t)$. We illustrate on the example
$x=12$ the computation of a line $P_{x,\text{\tiny$\bullet$}}$ of the
transition matrix corresponding to the uniform measure at infinity. From
Table~\ref{tab:mobiusB4}, we read the list of non zero values of the
corresponding line of the matrix, which are for this case: $2$, $21$, $23$,
$213$ and~$2132$. According to Theorem~\ref{thr:2}, for $x$ fixed, the
entries $P_{x,y}$ of the matrix are proportional to~$h(y)$, and the
normalization factor is $p^{-\abs x}h(x)$. Reading the values of $h$ in
Table~\ref{tab:mobiusB4}, we use the relation $1-2p-p^2+p^3+p^4+p^5=0$ to
write the coefficients as polynomials in~$p$, yielding:
\begin{align*}
  P_{12,2}&=p\,,
  &P_{12,213}&=-1+p+2p^2+2p^3+p^4\,,\\
  P_{12,21}=P_{12,23}&=p(1-2p^2-2p^3-p^4)\,,
  &P_{12,2132}&=p^4    \,.
\end{align*}

\subsection{Computations for $\dbr 4$}
\label{sec:computations-dbr-4}

We now treat the case of the $\dbr 4$. In order to simplify the notations, we
write $(ij)$ for the generator~$\sigma_{i,j}$\,, so for instance:
$\delta_4=(12)\cdot(23)\cdot(34)$\,.  The monoid $\dbr 4$ has the six
generators $(ij)$ for $1\leq i<j\leq4$, subject to the following relations:
\begin{align*}
  (12)\cdot(23)&=(23)\cdot(13)=(13)\cdot(12)
&(12)\cdot(24)&=(24)\cdot(14)=(14)\cdot(12)\\
(13)\cdot(34)&=(34)\cdot(14)=(14)\cdot(13)
&(23)\cdot(34)&=(34)\cdot(24)=(24)\cdot(23)\\
(12)\cdot(34)&=(34)\cdot(12)
&(23)\cdot(14)&=(14)\cdot(23)
\end{align*}

The set of simple braids $\SS_4$ has $14$ elements, which we organize below
according to the type of partition of the integer $4$ that the associated
non-crossing partition of $\{1,2,3,4\}$ induces (see
Subsection~\ref{sec:comb-epr-simple}):
\begin{align*}
  \SS_4=&\bigl\{\unit,&&\text{$1$ partition of type $1+1+1+1$}\\
&\delta_4,
&&\text{$1$ partition of type $4$}\\
&(12),\;(13),\;(14),\;(23),\;(24),\;(34),&&\text{$6$ partitions of type $2+1+1$}\\
&(12)\cdot(23),\;(12)\cdot(24),\;(13)\cdot(34),\;(23)\cdot(34),
&&\text{$4$ partitions of type $3+1$}\\
&(13)\cdot(24),\;(12)\cdot(34)\bigr\}
&&\text{$2$ partitions of type $2+2$}\\
\end{align*}

Following the same scheme as for~$\dbr 3$, we gather in
Table~\ref{tab:oijaoijaopakla} the characteristic elements for~$\dbr 4$\,. In
particular, the first line gives the Möbius polynomial, from which the
characteristic value $q_4$ is derived:
\begin{align*}
  H_4(t)&=(1-t)(1-5t+5t^2)\,,
&q_4&=\frac12-\frac{\sqrt5}{10}\,.
\end{align*}

\begin{table}
  \centering
\begin{gather*}
  \begin{array}{clll}
    x\in\SS_4&\{y\in\SS_4\tq x\to y\}&h(x)&\rho(x)\\
\hline
\rule{0em}{1em}    \unit&\unit&1-6p+10p^2-5p^3&0\\
    \delta_4&  \SS_4     &p^3&1/5-2\sqrt5/25\\
    ({12})&(12),\;(13),\;(14)
                            &p(1-3p+2p^2)&\sqrt5/25\\
    ({13})&(13),\;(14),\;(23),\;(24),\;(14)\cdot(23)
                            &p(1-2p+p^2)&1/10+\sqrt5/50\\
    ({14})&(14),\;(24),\;(34)
                            &p(1-3p+2p^2)&\sqrt5/25\\
    ({23})&(12),\;(23),\;(24)
                            &p(1-3p+2p^2)&\sqrt5/25\\
    ({24})&(12),\;(13),\;(24),\;(34),\;(12)\cdot(34)
                            &p(1-2p+p^2)&1/10+\sqrt5/50\\
    ({34})&(13),\;(23),\;(34)
                            &p(1-3p+2p^2)&\sqrt5/25\\
    ({12})\cdot({23})&\SS_4\setminus\bigl\{\delta_4,\;(34),\;(23)\cdot(34),\;(13)\cdot(34),\;(12)\cdot(34)\bigr\}
                            &p^2(1-p)&1/10-\sqrt5/50\\
    ({12})\cdot ({24})&\SS_4\setminus\bigl\{\delta_4,\;(23),\;(12)\cdot(23),\;(23)\cdot(34),\;(14)\cdot(23)\bigr\}
                            &p^2(1-p)&1/10-\sqrt5/50\\
    ({23})\cdot ({34})&\SS_4\setminus\bigl\{\delta_4,\;(14),\;(12)\cdot(24),\;(13)\cdot(34),\;(14)\cdot(23)\bigr\}&p^2(1-p)&1/10-\sqrt5/50\\
    ({13})\cdot ({34})&\SS_4\setminus\bigl\{\delta_4,\;(12),\;(12)\cdot(23),\;(12)\cdot(24),\;(12)\cdot(34)\bigr\}
                            &p^2(1-p)&1/10-\sqrt5/50\\
    ({14})\cdot ({23})&
                                                  \SS_4\setminus\bigl\{\delta_4,\;(13),\;(12)\cdot(23),\;(13)\cdot(34)           \bigr\}
                            &p^2(1-p)&1/10-\sqrt5/50\\
    ({12})\cdot ({34})& \SS_4\setminus\bigl\{\delta_4,\;(24),\;(12)\cdot(24),\;(23)\cdot(34)     \bigr\}
                            &p^2(1-p)&1/10-\sqrt5/50
  \end{array}
\end{gather*}
\caption{Characteristic elements for the dual braid monoid~$\dbr
  4$. The column $h(x)$ gives the Möbius transform on $\SS_4$ of the
  function $f(x)=p^{|x|}$\,. The  column $\rho(x)$ evaluates the same
  quantity for the particular case $p=q_4$\,.}
  \label{tab:oijaoijaopakla}
\end{table}

The computation of the transition matrix of the chain of simple braids
induced by the uniform measure at infinity yields the values reported in
Table~\ref{tab:ojkazaaq}, where the line corresponding to $\delta_4$ has been
omitted. This line, which also corresponds to the initial measure of the
chain, is given by the function $\rho(x)$ tabulated in
Table~\ref{tab:oijaoijaopakla}.

\begin{table}
\small
  \begin{gather*}
\begin{aligned}
\left.\begin{array}{CCCCCCc}
  ({12})&
    ({13})&
    ({14})&
    ({23})&
    ({24})&
    ({34})&
\cdots
\end{array}\right.
\\
    \begin{array}{c}
  ({12})\\
    ({13})\\
    ({14})\\
    ({23})\\
    ({24})\\
    ({34})\\
    ({12})\cdot({23})\\
    ({12})\cdot ({24})\\
    ({23})\cdot ({34})\\
    ({13})\cdot ({34})\\
    ({14})\cdot ({23})\\
    ({12})\cdot ({34})
    \end{array}
\left(\begin{array}{CCCCCCc}
1/2-\rc&2\rc&1/2-\rc&0&0&0&\cdots\\
0&1/2-\rc&-1/2+3\rc&-1/2+3\rc&1/2-\rc&0&\cdots\\
0&0&1/2-\rc&0&2\rc&1/2-\rc&\cdots\\
1/2-\rc&0&0&1/2-\rc&2\rc&0&\cdots\\
-1/2+3\rc&1/2-\rc&0&0&1/2-\rc&-1/2+3\rc&\cdots\\
0&2\rc&0&1/2-\rc&0&1/2-\rc&\cdots\\
-1/10+\rc&1/5&-1/10+\rc&-1/10+\rc&1/5&0&\cdots\\
-1/10+\rc&1/5&-1/10+\rc&0&1/5&-1/10+\rc&\cdots\\
-1/10+\rc&1/5&0&-1/10+\rc&1/5&-1/10+\rc&\cdots\\
0&1/5&-1/10+\rc&-1/10+\rc&1/5&-1/10+\rc&\cdots\\
-1/10+\rc&0&-1/10+\rc&-1/10+\rc&1/5&-1/10+\rc&\cdots\\
-1/10+\rc&1/5&-1/10+\rc&-1/10+\rc&0&-1/10+\rc&\cdots\\
\end{array}\right.\\
\end{aligned}
\\[1em]
\begin{aligned}
&\left.    \begin{array}{cCCCCCCc}
\cdots&
    ({12})\cdot({23})&
    ({12})\cdot ({24})&
    ({23})\cdot ({34})&
    ({13})\cdot ({34})&
    ({14})\cdot ({23})&
    ({12})\cdot ({34})
    \end{array}
\right.
\\
&
\left.
    \begin{array}{cCCCCCCc}
\cdots&      0&0&0&0&0&0&\\ 
\cdots&      0&0&0&0&1-4\rc&0&\\ 
\cdots&      0&0&0&0&0&0&\\    
\cdots&      0&0&0&0&0&0&\\   
\cdots&      0&0&0&0&0&1-4\rc&\\   
\cdots&      0&0&0&0&0&0&\\   
\cdots&      3/10-\rc&3/10-\rc&0&0&3/10-\rc&0&\\   
\cdots&      0&3/10-\rc&0&3/10-\rc&0&3/10-\rc&\\   
\cdots&      3/10-\rc&0&3/10-\rc&0&0&3/10-\rc&\\   
\cdots&      0&0&3/10-\rc&3/10-\rc&3/10-\rc&0&\\   
\cdots&      0&3/10-\rc&3/10-\rc&0&3/10-\rc&3/10-\rc&\\   
\cdots&      3/10-\rc&0&0&3/10-\rc&3/10-\rc&3/10-\rc&\\   
    \end{array}
\right)
\begin{array}{c}
  ({12})\\
    ({13})\\
    ({14})\\
    ({23})\\
    ({24})\\
    ({34})\\
    ({12})\cdot({23})\\
    ({12})\cdot ({24})\\
    ({23})\cdot ({34})\\
    ({13})\cdot ({34})\\
    ({14})\cdot ({23})\\
    ({12})\cdot ({34})
    \end{array}
  \end{aligned}
\end{gather*}
\caption{Transition matrix for the uniform measure at infinity
  for~$\dbr 4$, restricted to its unique ergodic component
  $\SS_4\setminus\{\unit,\delta_4\}$, and where we have put $\theta=\sqrt5/10$.}
  \label{tab:ojkazaaq}
\end{table}

\section{Extensions}\label{se-ext}

There are various other questions of interest concerning the asymptotic
behavior of random braids, uniformly distributed among braids of length~$k$.
For instance, what is the asymptotic value of the height of a large braid? In
other words, how do the Garside length and the Artin length compare to each
other for large braids?

The height of braids gives rise to a sequence of integer random variables
$\height_k:\bbr n(k)\to\bbN$, indexed by~$k$, where $\bbr n(k)=\{x\in\bbr
n\tq\abs x=k\}$ is equipped with the uniform distribution. Since the ratios
height over length are uniformly bounded and bounded away from zero,
performing the correct normalization leads to considering the sequence of
real random variables $\rho_k:\bbr n(k)\to\bbR$ defined by:
\begin{gather*}
\forall x\in\bbr n(k)\qquad  \rho_k(x)=\frac{\height(x)}{\abs x}=\frac{\height(x)}k\,,
\end{gather*}
which takes values in the fixed interval $[1/\abs{\Delta},1]$. Since all
these random variables are defined on different probability spaces, the
natural way of studying their asymptotic behavior is by studying their
convergence in distribution.

A first result one may wish to establish is a \emph{concentration
  result}: one aims to prove that $(\rho_k)_{k\geq1}$ converges in
distribution toward a single value, say~$\rho$. Hence one expects a
convergence in distribution of the following form, where $\delta_\rho$
denotes the Dirac probability measure on the singleton~$\{\rho\}$:
\begin{gather}
  \label{eq:27}
\frac{\height(\cdot)}k\xrightarrow[k\to\infty]{\mathrm{\quad d\quad}}\delta_\rho\,,
\end{gather}
where $\rho$ is some real number in the open interval $(1/\abs{\Delta},1)$.
The number $\rho$ would appear as a \emph{limit
  average rate}: most of braids of Artin size $k$ would have, for $k$
large enough, a Garside size close to~$\rho k$. If $\rho$ can furthermore be
simply related to the quantities we have introduced earlier, such as the
characteristic parameter~$q_n$, it is reasonable to expect that $\rho$ would
be an algebraic number.

Once this would have been established, the next step would consist in
studying a Central Limit Theorem: upon normalization, is the distribution of
$\rho_k$ Gaussian around its limit value~$\rho$? Hence, one expects a
convergence in distribution of the following form, for some constant
$\sigma_n^2>0$ and where $\NN(0,\sigma^2)$ denotes the Normal distribution of zero
mean and variance~$\sigma^2$:
\begin{gather}
  \label{eq:21}
\sqrt
k\Bigl(\frac{\height(\cdot)}k-\rho\Bigr)\xrightarrow[k\to\infty]{\quad\mathrm{d}\quad}
\NN(0,\sigma_n^2)
\end{gather}

It turns out that both results~(\ref{eq:27}) and~(\ref{eq:21}) hold indeed.
Because of space constraints, we postpone their proofs to a forthcoming
work~\cite{opus2}.

This concerned an extension of the results established in this paper.
Generalizations to other monoids are also possible, which we intend to expose
in~\cite{opus2}. Braid monoids fall into the wider class of Artin-Tits
monoids, investigated by several authors since the 1960's, including Tits,
Deligne, Sato, Brieskorn, Garside, Charney, Dehornoy. Several results
established in this paper for braid monoids admit generalizations to
Artin-Tits monoids, and analogues of the convergences~(\ref{eq:27})
and~(\ref{eq:21}) also hold.

Among Artin-Tits monoids, one class in particular has retained the attention
of the authors: the class of trace monoids, also called partially commutative
monoids~\cite{cartier69}. In trace monoids, the only relations between
generators are commutativity relations (there is no braid relations); they
correspond to Viennot's \emph{heap monoids}~\cite{viennot86}. Trace monoids
differ from braid monoids for several reasons, for instance there is no
lattice structure and their associated Coxeter group is not finite. From the
point of view adopted in this paper, the main difference lies in the
existence of a \emph{continuum of multiplicative measures}, among which the
uniform measure is a particular case. Recall that we have observed in
Remark~\ref{rem:7} that the uniform measure for infinite braids is the only
instance of multiplicative measures, so the situation for braids presents a
sharp contrast with trace monoids. The investigation of multiplicative
measures for trace monoids has been the topic of~\cite{abbes15a}.

We shall prove in~\cite{opus2} that, from this perspective, there are
essentially only two types of Artin-Tits monoids: the trace type and the
braid type, corresponding respectively to the type with a continuum of
multiplicative measures, and the type where multiplicative measures reduce to
the uniform measure only. For the trace type, multiplicative measures are
parametrized by a sub-manifold of~$\bbR^m$, diffeomorphic to the standard
$(m-1)$-simplex, where $m$ is the minimal number of generators of the monoid.

\printbibliography


\end{document}